\numberwithin{equation}{section} \newcommand{\beq}{\begin{equation}}
  \newcommand{\eeq}{\end{equation}}
\newcommand{\bea}{\begin{eqnarray}} \newcommand{\eea}{\end{eqnarray}}
\newcommand{\beas}{\begin{eqnarray*}}
  \newcommand{\eeas}{\end{eqnarray*}} \newcommand{\ds}{\displaystyle}
\newtheorem{theorem}{Theorem}[section]
\newtheorem{definition}[theorem]{Definition}
\newtheorem{proposition}[theorem]{Proposition}
\newtheorem{corollary}[theorem]{Corollary}
\newtheorem{lemma}[theorem]{Lemma}
\newtheorem{remark}[theorem]{Remark}
\newtheorem{example}[theorem]{Example}
\newtheorem{examples}[theorem]{Examples}
\newtheorem{foo}[theorem]{Remarks}
\newcommand{\ang}[1]{\left<#1\right>} % angular brackets for projection
\newcommand{\brak}[1]{\left(#1\right)} % round brackets
\newcommand{\crl}[1]{\left\{#1\right\}} % curly brackets
\newcommand{\edg}[1]{\left[#1\right]} % edgy brackets
\newcommand{\E}{\mathbb{E}} \renewcommand\P{\mathbb{P}}
 \newcommand{\bM}{\mathbb M}
 \newcommand{\F}{\mathcal F}
\newcommand{\Ho}{\mathcal H} \newcommand{\V}{\mathcal V}
 \newcommand{\M}{\mathbb M}
 \newcommand{\R}{\mathbb R}
 \newcommand{\be}{\beta}
 \newcommand{\ve}{\varepsilon}
\newcommand{\ch}{\mathcal H}
\newcommand{\ep}{\varepsilon} 
\DeclareMathOperator{\Cut}{Cut}
\title{Radial processes for sub-Riemannian Brownian motions and
  applications}
\author[1]{Fabrice Baudoin} \author[2]{Erlend Grong}
\author[3]{Kazumasa Kuwada\footnote{Deceased on December 29, 2018.}}
\author[4]{Robert Neel} \author[5]{Anton Thalmaier}
\affil[1]{\small Department of Mathematics, University of
  Connecticut,\par
  341 Mansfield Road, Storrs, CT 06269-1009, USA\par
  \texttt{fabrice.baudoin@uconn.edu}\vspace{1em}}
\affil[2]{\small University of Bergen, Department of Mathematics, \par
  P.O. Box 7803, 5020 Bergen, Norway\par
  \texttt{erlend.grong@math.uib.no}\vspace{1em}}
\affil[3]{\small Department of Mathematics, Graduate School of
  Science,\par
  Tohoku University, 980-8578, Sendai, Japan\vspace{1em}}
\affil[4]{\small Department of Mathematics, Lehigh University,\par
  Bethlehem, PA 18015-3174, USA. \par
  \texttt{robert.neel@lehigh.edu}\vspace{1em}}
\affil[5]{\small Department of Mathematics, University of
  Luxembourg,\par
  L-4364 Esch-sur-Alzette, Luxembourg\par
  \texttt{anton.thalmaier@uni.lu}}
\date{\vspace{-8,5ex}}
\begin{document}

{ \makeatletter
  \addtocounter{footnote}{1} % to get dagger instead of star
  \renewcommand\thefootnote{\@fnsymbol\c@footnote}%
  \makeatother
  \maketitle
}

\newcommand\blfootnote[1]{%
  \begingroup \renewcommand\thefootnote{}\footnote{#1}%
  \addtocounter{footnote}{-1}%
  \endgroup } \blfootnote{The first author has been supported in part
  by NSF Grants DMS-1660031, DMS DMS-1901315 and the Simons
  Foundation; the second author by project 249980/F20 of the
  Norwegian Research Council; the third author by JSPS
  Grant-in-Aid for Young Scientist (KAKENHI) 26707004; the fourth
  author by a grant from the Simons Foundation (\#524713,
  RN), and the fifth author by FNR Luxembourg: OPEN project
  GEOMREV O14/7628746.}

\begin{abstract}
  We study the radial part of sub-Riemannian Brownian motion in the
  context of totally geodesic foliations. It\^o's formula is proved
  for the radial processes associated to Riemannian distances
  approximating the Riemannian one. We deduce very general stochastic
  completeness criteria for the sub-Riemannian Brownian motion. In the
  context of Sasakian foliations and H-type groups, one can push the analysis further,
  and taking advantage of the recently proved sub-Laplacian comparison
  theorems one can compare the radial processes for the sub-Riemannian
  distance to one-dimensional model diffusions. As a geometric
  application, we prove Cheng's type estimates for the Dirichlet
  eigenvalues of the sub-Riemannian metric balls, a result which seems
  to be new even in the Heisenberg group.\\
\end{abstract}

\section{Introduction}

In the context of Riemannian manifolds the study of the radial part
of Brownian motion yields new proofs and sheds new light on
several well-known theorems of Riemannian geometry; see for instance the
paper \cite{Ich} and the book \cite{Hsu} for an overview. Our goal in
the present paper is to extend those techniques to the context of
sub-Riemannian manifolds. In the last few years, the study of Brownian motion on sub-Riemannian
manifolds has attracted a lot of interest, see \cite{Bau}, \cite{BFG},
\cite{GL} and \cite{Th}, and several applications to the study of heat
semigroup gradient bounds and functional inequalities on pathspaces
have been obtained. Despite those numerous works the probabilistic
study of the radial part of the sub-Riemannian Brownian motion is not yet
developed. Taking advantage of the sub-Laplacian comparison
theorems recently proved in \cite{BGKT}, it is now possible to pursue
such a study.

\

In this paper we focus on two classes of sub-Riemannian manifolds. The
first class is the class of sub-Riemannian manifolds whose horizontal
distribution is the horizontal distribution of some Riemannian
foliation with totally geodesic leaves. The sub-Riemannian geometry of
such structures is by now well understood, thanks to the works
\cite{BG}, \cite{GT1} and \cite{GT2}. A key insight is to approximate
the sub-Riemannian distance $d_0$ by a family of Riemannian distances
$d_\ve$, $\ve >0$ which converges to $d_0$ as $\ve \to 0$. The
sub-Laplacian comparison theorems associated to $d_\ve$ obtained in
that context are very general but with the drawback that there is no
limit when $\ve \to 0$. Hence, no results for the sub-Laplacian of the
sub-Riemannian distance can be deduced.  One of the main results we
obtain for the radial processes is Theorem \ref{th:Ito-radial}, which
is the It\^o formula for the radial process. It reads as
follows. Let $(\xi_t)_{t \ge 0}$ be the sub-Riemannian Brownian motion
on a Riemannian manifold equipped with a totally geodesic
foliation, and let $\Delta_{\ch}$ be the sub-Laplacian (see the next section for the precise definitions). Denote by $r_\ve$ the distance from a fixed point $x_0$ and
by $\zeta$ the lifetime of the process. Then
\begin{equation*}
  r_\ve ( \xi_{ t \wedge \zeta } ) 
  = 
  r_\ve ( \xi_0 ) 
  + \be_t 
  + \int_0^{ t \wedge \zeta } \Delta_{\ch} r_\ve ( \xi_s ) ds
  - l_{ t \wedge \zeta }
\end{equation*}
where $l_t$ is a non-decreasing continuous process which increases
only when $\xi_t$ is in the $d_\ve$ cut-locus of $x_0$ and where
$\be_t$ is a martingale on $\R$, starting from $0$, with $\ang{\be}_t \le 2 t$. This
decomposition is the sub-Riemannian analogue of
Kendall's well-known result \cite{Kendall}. However, note that $\be_t$ in this
result is not a Brownian motion unlike in the Riemannian case. Even if
$\be_t$ is not a Brownian motion, we are still able in Section~\ref{sec:ComparisonGen} to prove very general stochastic completeness criteria, see
Theorem \ref{thm:S-cpl}.

\
 
The second class of sub-Riemannian manifolds we will focus on is the
class of Sasakian foliations. Sasakian foliations are a special class
of totally geodesic foliations for which the leaves have dimension
one. In that particular class of examples, it was proved in
\cite{BGKT,LL} that it is possible to prove sharp sub-Laplacian
comparison theorems for $\Delta_{\ch} r_\ve$ which actually have a
limit when $\ve \to 0$. As a consequence, we are able to study the
radial process with respect to the sub-Riemannian distance itself. Let $n$ be the dimension of the horizontal distribution.
Our main result is the comparison Theorem~\ref{comparison 1d}. It
states that under natural curvature lower bounds (expressed in terms of constants $k_1$ and $k_2$), one has in a weak
sense,
\[
  r_0 ( \xi_{ t } ) \le \tilde{\xi}_t
\]
where $\tilde{\xi}$ is a one-dimensional diffusion with generator
\[
  L_{k_1,k_2}= \big(F_{\mathrm{Sas}}(r,k_1) + (n-2)
  F_{\mathrm{Rie}}(r,k_2)\big) \frac{\partial}{\partial r}
  +\frac{\partial^2}{\partial r^2}
\]
and $F_{\mathrm{Sas}}, F_{\mathrm{Rie}}$ are the explicit functions
defined by
\begin{equation} \label{FRie} F_{\mathrm{Rie}}(r,k) = \begin{cases}  \sqrt{k} \cot \sqrt{k} r & \text{if $k > 0$,} \\
  {1}/{r} & \text{if $k = 0$,}\\ \sqrt{|k|} \coth \sqrt{|k|} r &
  \text{if $k < 0$,} \end{cases}\end{equation} and
\begin{equation} \label{FSas}
F_{\mathrm{Sas}}(r,k) = \begin{cases}  \displaystyle\frac{\sqrt{k}(\sin \sqrt{k}r  -\sqrt{k} r \cos \sqrt{k} r)}{2 - 2\cos \sqrt{k} r - \sqrt{k} r \sin \sqrt{k} r} & \text{if $k > 0$,} \\
  {4}/{r} & \text{if $k = 0$,}\\ \displaystyle\frac{\sqrt{|k|}(
    \sqrt{|k|} r \cosh \sqrt{|k|} r - \sinh \sqrt{|k|}r)}{2 - 2\cosh
    \sqrt{|k|} r + \sqrt{|k|} r \sinh \sqrt{|k|} r} & \text{if
    $k < 0$.} \end{cases}\end{equation}

For instance, in the case of the 3-dimensional Heisenberg group, which
is a Sasakian manifold for which $n=2$ and $k_1=0, k_2=0$ one can see
that
\[
  L_{0,0}=\frac{4}{r} \frac{\partial}{\partial r}
  +\frac{\partial^2}{\partial r^2}.
\]
As a consequence, the sub-Riemannian radial part of the
sub-Riemannian Brownian motion in the Heisenberg group can be
controlled by a 5-dimensional Bessel processes. Note that the dimension
5 here is not too surprising since 5 is the MCP dimension of the
Heisenberg group (see \cite[Section~3.6]{BGKT} for further comments about the MCP
dimension in that context). As a corollary of our comparison theorem, we obtain a Cheng's type
estimate for the Dirichlet eigenvalues of sub-Riemannian metric balls,
see Section \ref{sec Cheng}. In the case $k_1=k_2=0$, which
thus includes the Heisenberg group, our result becomes the following.

\begin{theorem}
  Let $\M$ be a sub-Riemannian manifold associated to a Sasakian foliation, with horizontal distribution of dimension $n$, and which satisfies the above comparison result with $k_1=k_2=0$.
  For $x_0 \in \M$ and $R>0$, let $\lambda_1( B_0(x_0,R))$ denote the
  first Dirichlet eigenvalue of the sub-Riemannian ball $B_0(x_0,R)$
  and let $\tilde{\lambda}_1(d,R) $ denote the first Dirichlet
  eigenvalue of Euclidean ball with radius $R$ in $\mathbb{R}^d$.
  Then, for every $x_0 \in \M$ and $R>0$
  \[
  0<  \lambda_1( B_0(x_0,R)) \le \tilde{\lambda}_1( n+3,R).
  \]
\end{theorem}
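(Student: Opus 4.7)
The plan is to reduce the eigenvalue inequality to an exit-time comparison via Theorem~\ref{comparison 1d}, and then invoke the standard spectral relation between the asymptotic decay of Dirichlet exit-time probabilities and the first Dirichlet eigenvalue.

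First, specialising Theorem~\ref{comparison 1d} to $k_1 = k_2 = 0$ and using $F_{\mathrm{Sas}}(r,0) = 4/r$, $F_{\mathrm{Rie}}(r,0) = 1/r$, the comparison generator becomes
\[
L_{0,0} = \frac{\partial^2}{\partial r^2} + \frac{n+2}{r}\,\frac{\partial}{\partial r},
\]
which is precisely the radial part of the Euclidean Laplacian on $\R^{n+3}$. Consequently, the comparison diffusion $\tilde{\xi}_t$ has the same one-dimensional law as the radial part of a Brownian motion on $\R^{n+3}$, so exit times of $\tilde{\xi}$ from $[0,R)$ and exit times of that Brownian motion from the Euclidean ball $B(0,R) \subset \R^{n+3}$ have the same distribution.

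Next, let $\tau := \inf\{t \ge 0 : \xi_t \notin B_0(x_0,R)\}$ and $\tilde{\tau} := \inf\{t \ge 0 : \tilde{\xi}_t \ge R\}$. The comparison $r_0(\xi_t) \le \tilde{\xi}_t$ of Theorem~\ref{comparison 1d} gives, under the associated coupling, $\{\tilde{\tau} > t\} \subseteq \{\tau > t\}$, so
\[
\P_{x_0}(\tau > t) \ge \P(\tilde{\tau} > t), \qquad t \ge 0.
\]
Since $\Delta_{\ch}$ is hypoelliptic and $B_0(x_0,R)$ is a bounded open set, the Dirichlet realisation of $-\Delta_{\ch}$ on $B_0(x_0,R)$ has a discrete positive spectrum $0 < \lambda_1(B_0(x_0,R)) < \lambda_2 \le \cdots$ with a strictly positive first eigenfunction. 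The Feynman--Kac formula then yields the classical identity
\[
\lambda_1(B_0(x_0,R)) = -\lim_{t \to \infty} \frac{1}{t} \log \P_{x_0}(\tau > t),
\]
and an analogous identity produces $\tilde{\lambda}_1(n+3,R)$ from $\P(\tilde{\tau} > t)$. Taking logarithms in the probability inequality above and passing to the limit produces the desired bound $\lambda_1(B_0(x_0,R)) \le \tilde{\lambda}_1(n+3,R)$.

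The main obstacle is that the comparison of Theorem~\ref{comparison 1d} is asserted \emph{in a weak sense} because $r_0$ is singular on its sub-Riemannian cut locus. Converting this into a genuine stochastic domination of first exit times requires care, but is tractable: one observes that $r_0(\xi_t)$ is a semimartingale up to an extra nondecreasing local-time contribution supported on the cut locus (the sub-Riemannian analogue of the structure in Theorem~\ref{th:Ito-radial}), whose sign is favourable for the comparison and therefore does not spoil the exit-time ordering. A standard one-dimensional comparison argument for diffusions with singular drift then upgrades the generator inequality into the pathwise ordering used above.
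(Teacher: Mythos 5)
Your argument is correct and is essentially the same as the paper's: both use the one-dimensional comparison diffusion for the Sasakian radial process together with the large-$t$ spectral asymptotics of the killed process, and extract $\lambda_1 \le \tilde\lambda_1(n+3,R)$ in the limit. The paper works through Corollary~\ref{comparison heat kernel}, expanding the Dirichlet heat kernels $p^R$ and $q^R$ spectrally and letting $t\to\infty$; you instead phrase the same comparison in terms of survival probabilities and the Lyapunov-exponent identity $\lambda_1 = -\lim_{t\to\infty}\tfrac{1}{t}\log\P_{x_0}(\tau>t)$, which is just an equivalent way to read off the leading spectral decay. One small remark: the worry in your last paragraph about upgrading the ``weak sense'' comparison to a pathwise ordering is superfluous. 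Theorem~\ref{comparison 1d} already gives the distributional estimate you need directly: applying it with $\phi = 1_{[0,s)}$ and letting $s\uparrow R$ (or simply using Corollary~\ref{comparison heat kernel} for fixed $s<R$, which is what the paper does) yields $\P_{x_1}(\tau_R>t)\ge\P(\tilde\tau_R>t)$ without any need to pass through a pathwise coupling or to re-examine the local-time term on the cut locus.
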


The structure of the paper is as follows. In Sections~\ref{sec:Preliminaries}, \ref{sec:RadialProcess} and \ref{sec:ComparisonGen} we consider the general case of totally geodesic Riemannian foliations. After introducing the necessary background on such manifolds in Section~\ref{sec:Preliminaries}, we describe the diffusion and drift part of the sub-Riemannian radial process in Section~\ref{sec:RadialProcess}. Such a representation allows us to give a criterion for non-explosion of the sub-Riemannian Brownian motion in Section~\ref{sec:ComparisonGen}, which is more general than previous criteria for stochastic completeness found in \cite{BG} and \cite{GT3}. Finally, we use the sharp comparison theorem available to us for the case of Sasakian manifolds and H-type groups to prove results on the first Dirichlet eigenvalues and on exit times of sub-Riemannian balls in Section~\ref{sec:Sasakian}.

\section{Preliminaries and assumptions} \label{sec:Preliminaries}

In this preliminary section we introduce the geometric framework and
recall some of the general sub-Laplacian comparison theorems obtained
in \cite{BGKT}.

\subsection{Totally geodesic Riemannian foliations and canonical
  variation}

Let $(\M,g)$ be a complete Riemannian manifold of dimension $n+m$
equipped with a foliation $\mathcal{F}$ with $m$-dimensional
leaves. Let $\V$ be the integrable subbundle tangent to the leaves of
$\mathcal{F}$ and write its orthogonal complement with respect to $g$
as $\Ho$. We will assume that the foliation is \emph{Riemannian} and
with \emph{totally geodesic leaves}, which is equivalent to the
assumptions that
$$(\mathcal{L}_X g)(Z,Z) = 0, \qquad (\mathcal{L}_Z g)(X,X) = 0, \qquad X \in \Gamma^\infty(\Ho),\ Z \in \Gamma^\infty(\V),$$
where $\mathcal L$ stands for the Lie derivative. For further details
about totally geodesic foliations we refer to \cite{B2}. We will also
consider \emph{the canonical variation} $g_\ve$ of the Riemannian
metric $g$ defined by
$$g_\ve = g_\Ho \oplus \frac{1}{\ve} g_\V, \qquad g_\Ho = g|\Ho,\ g_\V = g|\V, \qquad \ve > 0.$$ We let $d_{\ve}$ be the Riemannian distance associated to $g_{\ve}$. The limit $\ve \to 0$ is called \emph{the sub-Riemannian limit}.  Throughout this paper, we will assume that  $\Ho$ is \emph{bracket-generating}, i.e.\ we assume that elements in $\Gamma^\infty(\Ho)$ together with all possible iterated brackets of such vector fields span the entire tangent bundle~$T\M$. If this is the case, the limiting distance $d_0(x,y) = \lim_{\ve \downarrow 0} d_\ve(x,y)$ will always be finite, is called the sub-Riemannian distance and has the following alternative realization. An absolutely continuous curve $\gamma: [0,t_1] \to \M$ is called \emph{horizontal} if $\dot \gamma(t) \in \Ho_{\gamma(t)}$ for almost every $t \in [0,t_1]$. It is clear that the length on horizontal curves only depends on $g_\Ho$. The bracket-generating condition ensures that any pair of points can be connected by a horizontal curve and the distance $d_0(x,y)$ can be realized as the infimum of the lengths of all horizontal curves connecting the pair of points.

\

For any fixed $x \in \M$, define $r_{\ve}(y) = d_\ve(x,y)$ for any $\ve\ge 0$. We further assume that there are no non-trivial abnormal minimizers for the sub-Riemannian limit; note that this is known to always hold in the Sasakian case \cite[Chapt.~8]{ABB}. The
cut locus $\Cut_\ve (x)$ is defined such that
$ y \in \M \setminus \Cut_\ve (x)$ if there exists a unique, non-conjugate, length-minimizing geodesic from $x$ to $y$ relative to $g_\ve$. The global
cut locus of $\M$ is defined by
\[
  \Cut_\ve (\M)=\left\{ (x,y) \in \M \times \M,\ y \in \Cut_\ve (x)
  \right\}.
\]

So far, the geometry and topology of $\Cut_0 (\M)$ is only poorly
understood. However, the following is known:

\begin{lemma}[\cite{A}, \cite{RT}]\label{cutlocus}
  Let $\ve \ge 0$. The following statements hold:
  \begin{enumerate}[\rm (a)]
  \item The distance function $x \to d_\ve (x_0,x) $ is locally
    semi-concave in $\M \setminus \{ x_0 \}$. In particular, it is
    twice differentiable almost everywhere.
  \item The set $\M \setminus \Cut_\ve (x_0)$ is open and dense in
    $\M$.
  \item The function $(x,y) \to d_\ve (x,y)^2$ is smooth on
    $\M \times \M \setminus \Cut_\ve (\M)$.
  \end{enumerate}
\end{lemma}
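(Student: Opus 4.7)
The plan is to handle the three parts in parallel for $\ve > 0$ and $\ve = 0$. When $\ve > 0$ the metric $g_\ve$ is genuinely Riemannian, so all three assertions are classical facts about the Riemannian distance and exponential map (see e.g.\ Cheeger--Ebin). The substantive work lies at $\ve = 0$, where the sub-Riemannian exponential map replaces the Riemannian one, and the standing no-abnormal-minimizer assumption is exactly what is needed to transfer the Riemannian arguments.

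For part (a) I would work in the Hamiltonian picture on $T^*\M$. Under the no-abnormal hypothesis, every length-minimizing horizontal curve issuing from $x_0$ is a normal geodesic, i.e.\ the projection of a Hamiltonian trajectory starting from some covector $p \in T^*_{x_0}\M$. Writing $H$ for the sub-Riemannian Hamiltonian and $\exp_{x_0}$ for the sub-Riemannian exponential map, this gives
\[
\tfrac{1}{2} r_0(y)^2 = \inf\bigl\{ H(x_0, p) : p \in T^*_{x_0}\M,\ \exp_{x_0}(p) = y,\ \text{the associated geodesic minimizes} \bigr\}.
\]
Locally near any $y_0 \ne x_0$, the admissible covectors lie in a compact subset of $T^*_{x_0}\M$ depending continuously on $y$, so $r_0^2$ is locally the infimum of a family of smooth functions with locally uniform $C^2$ bounds; this is the standard route to local semi-concavity and is carried out in detail in \cite{A} and \cite{RT}. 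Alexandrov's theorem then yields twice differentiability almost everywhere.

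For part (b), openness of $\M \setminus \Cut_\ve(x_0)$ is an implicit function theorem argument: at $y \notin \Cut_\ve(x_0)$ the unique minimizing covector $p$ is non-conjugate, so $\exp_{x_0}$ is a local diffeomorphism near $p$, and both uniqueness and non-conjugacy propagate by continuity to a neighborhood of $y$. Density follows because any length-minimizing geodesic from $x_0$ to $y$ remains outside $\Cut_\ve(x_0)$ strictly before its endpoint, so interior points of such a geodesic approximate $y$. For part (c), the same local diffeomorphism property makes $(x,y) \mapsto p(x,y) := \exp_x^{-1}(y)$ smooth on $\M \times \M \setminus \Cut_\ve(\M)$ — jointly in $(x,y)$ by the smooth dependence of the Hamiltonian flow on initial data — and therefore $(x,y) \mapsto d_\ve(x,y)^2 = 2 H(x, p(x,y))$ is smooth there.

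The main obstacle is the semi-concavity in part (a): obtaining locally uniform $C^2$-control on the family of smooth functions whose infimum represents $r_0^2$ requires a careful non-escape analysis of the sub-Riemannian exponential map, since in principle minimizing covectors could drift to infinity when $y$ moves. It is precisely this technical core that the no-abnormal-minimizer assumption enables, as worked out in \cite{A} and \cite{RT}; with that in hand, parts (b) and (c) are then routine consequences of the implicit function theorem applied to $\exp$.
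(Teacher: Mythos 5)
The paper does not prove this lemma; it is imported verbatim from Agrachev \cite{A} and Rifford--Tr\'elat \cite{RT}, so there is no internal argument to compare against. Your outline --- normality of every minimizer under the standing no-abnormal-minimizer assumption, local semi-concavity of $r_\ve^2$ as a locally uniform infimum of smooth functions followed by Alexandrov's theorem, and the inverse/implicit function theorem applied to the (sub-)Riemannian exponential for parts (b) and (c) --- is essentially the route taken in those references, where the compactness (non-escape) of minimizing covectors that you flag for part (a) is indeed the technical heart, and is needed for (b) and (c) as well.
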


The following theorem can be found in \cite{BGMR}.

\begin{theorem}[\cite{BGMR}]\label{p:cutapprox}
  Let $x,y \in \M$ with $y \notin \Cut_0(x)$. Then there exists an
  open neighbourhood $V$ of $y$ and $\varepsilon'>0$ such that
  $V \cap \Cut_\ve(x) = \emptyset$ for all $0\leq \ve <
  \ve'$. Furthermore, the map
  \begin{equation}
    (\ve,z) \mapsto r_\ve(z)=d_\ve(x,z)
  \end{equation}
  is smooth for $(\ve,z)\in [0, \ve')\times V$. In particular, we have
  uniform convergence $r_\ve \to r_0$ together with their derivatives
  of arbitrary order on compact subsets of $\M \setminus \Cut_0(x)$.
\end{theorem}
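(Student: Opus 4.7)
The plan is to lift everything to the cotangent side, where the family $g_\ve$ becomes a \emph{regular} smooth deformation rather than a singular one. The dual cometric to $g_\ve = g_\Ho \oplus \tfrac1\ve g_\V$ is $g_\ve^* = g_\Ho^* \oplus \ve\, g_\V^*$, so the associated kinetic Hamiltonian
\[
  H_\ve(x,p) = \tfrac12\brak{|p_\Ho|_{g_\Ho^*}^2 + \ve\, |p_\V|_{g_\V^*}^2}
\]
together with its Hamiltonian flow $\phi_t^{\vec H_\ve}$ extends smoothly to $\ve = 0$, where it becomes the sub-Riemannian Hamiltonian $H_0$. This yields jointly smooth exponential maps $\exp_{x,\ve}\colon T_x^*\M \to \M$, $\exp_{x,\ve}(p) = \pi\circ \phi_1^{\vec H_\ve}(p)$, which reproduce the usual Riemannian exponentials for $g_\ve$ with $\ve>0$ via the musical isomorphism $g_\ve^\sharp$.

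\textbf{Local inversion and persistence of minimality.} Since $y\notin\Cut_0(x)$ and the standing assumption excludes non-trivial abnormal minimizers, there is a unique $p_0\in T_x^*\M$ with $\exp_{x,0}(p_0) = y$, and $d_{p_0}\exp_{x,0}$ is an isomorphism. Applied to the smooth map $(\ve,p)\mapsto(\ve,\exp_{x,\ve}(p))$ at $(0,p_0)$, the inverse function theorem supplies neighbourhoods $U\subset T_x^*\M$ of $p_0$ and $V_0\subset\M$ of $y$, an $\ve_0>0$, and a smooth right-inverse $(\ve,z)\mapsto p_\ve(z)\in U$ with $\exp_{x,\ve}(p_\ve(z))=z$; set $\tilde r_\ve(z) := \sqrt{2H_\ve(p_\ve(z))}$. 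To conclude $\tilde r_\ve = r_\ve$ and $V_0\cap\Cut_\ve(x)=\emptyset$ for small $\ve$, I would argue by contradiction: a hypothetical sequence $\ve_n\downarrow 0$, $z_n\to y$ with $z_n\in\Cut_{\ve_n}(x)$ produces initial covectors $q_n$ of minimizing $g_{\ve_n}$-geodesics that are bounded since $2H_{\ve_n}(q_n)=d_{\ve_n}(x,z_n)^2\to d_0(x,y)^2$. A convergent subsequence $q_n\to q_\infty$ gives $\exp_{x,0}(q_\infty)=y$ with $H_0(q_\infty)=\tfrac12 d_0(x,y)^2$, so $q_\infty$ is the initial covector of a minimizing $g_0$-geodesic to $y$; uniqueness at $y\notin\Cut_0(x)$ forces $q_\infty=p_0$. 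For large $n$ this places $q_n$ inside $U$ from the previous step, where $\exp_{x,\ve}$ is a local diffeomorphism, contradicting either the conjugacy of $z_n$ (its Jacobi field operator degenerates while $d_{q_n}\exp_{x,\ve_n}$ stays invertible) or the existence of a second distinct minimizing covector. Shrinking $V\subseteq V_0$ and $\ve'\le\ve_0$ accordingly settles the first two assertions.

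\textbf{Uniform convergence on compacta and main obstacle.} For compact $K\subset \M\setminus\Cut_0(x)$, cover $K$ by finitely many such neighbourhoods $V^{(i)}$ with associated thresholds $\ve^{(i)}$, set $\ve' := \min_i \ve^{(i)}$, and use the joint smoothness of $(\ve,z)\mapsto r_\ve(z)$ on each $[0,\ve')\times V^{(i)}$ together with the mean-value theorem in $\ve$ to obtain uniform convergence $r_\ve\to r_0$ in the $C^k$-sense in $z$ on $K$, for every $k$. The conceptually delicate step is the persistence argument above: while local inversion of $\exp_{x,\ve}$ at $p_0$ is an immediate consequence of smooth Hamiltonian dependence on $\ve$, excluding competing near-minimizing geodesics that reach a neighbourhood of $y$ from elsewhere in $T_x^*\M$ requires a global compactness and uniqueness argument, and critically uses the absence of non-trivial abnormal minimizers for $g_0$ to guarantee that every limiting covector is normal rather than a degenerate artefact of the sub-Riemannian limit.
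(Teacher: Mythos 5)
The paper itself offers no proof of this statement -- it is quoted from \cite{BGMR} -- so your proposal can only be measured against the argument in that source, and your overall scheme (smooth extension of the Hamiltonians $H_\ve$ and of the normal exponential maps to $\ve=0$, inverse function theorem at the unique non-conjugate minimizing covector, then a contradiction argument to rule out nearby cut points) is indeed of the same type. However, there is a genuine gap at the key compactness step. You claim the initial covectors $q_n$ of minimizing $g_{\ve_n}$-geodesics are bounded ``since $2H_{\ve_n}(q_n)=d_{\ve_n}(x,z_n)^2\to d_0(x,y)^2$.'' This does not follow: $2H_{\ve_n}(q_n)=|q_{n,\Ho}|^2_{g^*_\Ho}+\ve_n|q_{n,\V}|^2_{g^*_\V}$, so the energy bound controls only the horizontal part of $q_n$, while the vertical part may blow up like $\ve_n^{-1/2}$ -- this is exactly the direction in which the cometric degenerates in the sub-Riemannian limit, and it is the whole difficulty of the statement (already in the Heisenberg group, minimizing covectors of the approximating metrics have vertical momenta that are unbounded as $\ve\to 0$ near the cut locus). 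Consequently the extraction of a convergent subsequence $q_n\to q_\infty$, on which your entire contradiction argument rests (both the conjugacy branch and the two-minimizers branch, as well as the identification $\tilde r_\ve=r_\ve$), is unjustified as written.

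The repair is precisely where the hypothesis of no non-trivial abnormal minimizers must do real work, and not merely be invoked at the end as you do: if $|q_n|\to\infty$, rescale $\hat q_n:=q_n/|q_n|$ and pass to a limit $\hat q\neq 0$; boundedness of the horizontal parts forces $\hat q$ to annihilate $\Ho_x$, and propagating this along the Arzel\`a--Ascoli limit of the minimizing geodesics (which is a $d_0$-minimizer from $x$ to $y$, using $d_\ve\to d_0$ locally uniformly) produces an abnormal lift of that minimizer, contradicting the standing assumption. Only with this uniform bound on minimizing covectors in hand does your inverse-function-theorem neighbourhood $U$ capture all competitors, and only then can you conclude $q_n\in U$, rule out conjugacy and multiplicity, and identify $p_\ve(z)$ with the minimizing covector so that $\tilde r_\ve=r_\ve$. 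Two smaller imprecisions: uniqueness at $y\notin\Cut_0(x)$ is uniqueness of the \emph{minimizing} covector (there may be other $p$ with $\exp_{x,0}(p)=y$ along non-minimizing geodesics), with uniqueness of the covector itself again relying on the absence of abnormal lifts; and one should say a word about why the $g_{\ve}$-Hamiltonian flows are complete (or defined up to time one on the relevant bounded sets of covectors), which follows from completeness of $g$ and $g_\ve\ge g$ for $\ve\le 1$.
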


\subsection{Sub-Laplacian comparison theorems for the Riemannian
  approximations}

The Riemannian gradient will be denoted by $\nabla$ and the Riemannian volume by $\mu$ (that is, for $\ve=1$) and we write the horizontal gradient as $\nabla_\Ho$, which is the projection of
$\nabla$ onto $\mathcal{H}$. The horizontal (or sometimes called
sub-Riemannian) Laplacian $\Delta_\Ho$ is the generator of the
symmetric closable bilinear form:
\[
  \mathcal{E}_{\mathcal{H}} (f,g) =-\int_\bM \langle
  \nabla_\mathcal{H} f , \nabla_\mathcal{H} g \rangle_{\mathcal{H}}
  \,d\mu, \quad f,g \in C_0^\infty(\M).
\]
(Note that using the volume of any of the Riemannian structures for $\ve>0$ would give the same form and thus the same generator.) The hypothesis that $\mathcal{H}$ is bracket generating implies that
the horizontal Laplacian $\Delta_{\mathcal{H}}$ is locally subelliptic
and the completeness assumption on $g$ implies that
$\Delta_{\mathcal{H}}$ is essentially self-adjoint on the space of
smooth and compactly supported functions (see for instance~\cite{B2}).
Estimates on $\Delta_\ch r_\ve$ outside of the cut-locus have been
obtained in \cite{BGKT} and rely on the control of some tensors
associated to a canonical connection (the Bott connection). The exact
definition of those tensors is not relevant in the present paper, so
for conciseness we omit the details, but refer to \cite{BGKT}. Such
tensors were denoted $\mathbf{Ric}_\Ho$, $\mathbf{J}^2$, and
$\mathbf{Tr} (J^2_{Z})$. Throughout the Sections 2,\,3 and 4 we assume
that globally on $\M$, for every $X \in \Gamma^\infty(\mathcal{H})$
and $Z \in \Gamma^\infty(\mathcal{V})$,
\begin{align}\label{assumptions curvature}
  \mathbf{Ric}_\Ho (X,X) \ge  \rho_1 (r_\varepsilon )  \| X \|^2_{\mathcal{H}},
  \quad -\langle \mathbf{J}^2 X, X  \rangle_\mathcal{H} \le \kappa (r_\varepsilon) \| X \|^2_\mathcal{H},
  \quad -\frac{1}{4} \mathbf{Tr} (J^2_{Z})\ge \rho_2(r_\varepsilon) \| Z \|^2_\mathcal{V},
\end{align}
for some continuous functions $\rho_1,\rho_2,\kappa$ with $\kappa >0$ and $\rho_2 \geq 0$. We moreover
always assume that the foliation is of Yang-Mills type (see
\cite{BGKT}).  The main results obtained in \cite{BGKT} under those
assumptions are the following:

\begin{theorem}[\cite{BGKT}]\label{comparison 1}
  Let $x_0\in \M$ be fixed and for $\varepsilon >0$ let
  $r_\varepsilon (x) =d_\varepsilon (x_0,x).$ Let $x \in \M$,
  $x \neq x_0$ and $x$ not in the $d_\varepsilon$ cut-locus of
  $x_0$. Let $G:[0,r_\varepsilon(x)]\to \mathbb{R}_{\ge 0}$ be a
  differentiable function which is positive on $(0,r_\varepsilon(x)]$
  and such that $ G(0)=0$. We have
  \begin{align*}
    \Delta_\Ho r_\varepsilon(x) \le \frac{1}{G(r_\varepsilon(x))^2} \int_0^{r_\varepsilon(x)} \bigg(nG'(s)^2-\bigg[
    &\bigg(\rho_1 (s )-\frac{1}{\varepsilon} \kappa(s)  \bigg)\Gamma(r_\varepsilon)(x)\\
    &+\rho_2(s)  \Gamma^\V(r_\varepsilon)(x)\bigg]G(s)^2 \bigg)ds.
  \end{align*}
\end{theorem}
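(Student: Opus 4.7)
The plan is to derive the estimate by a Hessian-comparison argument adapted to the sub-Laplacian, using a minimizing $g_\ve$-geodesic from $x_0$ to $x$ and choosing horizontal test variations along it. Since $x\notin \Cut_\ve(x_0)$, Theorem~\ref{p:cutapprox} and Lemma~\ref{cutlocus} ensure that $r_\ve$ is smooth near $x$ and that there is a unique unit-speed minimizing geodesic $\gamma:[0,r_\ve(x)]\to\M$ connecting $x_0$ to $x$.

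First I would pick a $g_\Ho$-orthonormal basis $\{e_1,\ldots,e_n\}$ of $\Ho_x$ and, along $\gamma$, construct horizontal test fields $J_i(s)=G(s)E_i(s)$, where $E_i$ is the horizontal extension of $e_i$ obtained by Bott-parallel transport; the Yang--Mills hypothesis permits this without introducing vertical drift. These satisfy $J_i(0)=0$ and $J_i(r_\ve(x))=G(r_\ve(x))\,e_i$. The second-variation/index-form inequality for the $g_\ve$-energy then gives
\begin{equation*}
  G(r_\ve(x))^2\,\mathrm{Hess}(r_\ve)(e_i,e_i) \le \int_0^{r_\ve(x)} \Bigl( \|\nabla^\ve_s J_i\|_\ve^2 - R^\ve(\dot\gamma,J_i,\dot\gamma,J_i)\Bigr)\,ds,
\end{equation*}
where $R^\ve$ is the Riemann tensor of $g_\ve$. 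Summing over $i=1,\ldots,n$ produces the kinetic term $n\,G'(s)^2$ (since each $E_i$ is parallel) and the horizontal partial Ricci $\sum_i R^\ve(\dot\gamma,E_i,\dot\gamma,E_i)$ from the curvature term; the left-hand side becomes $G(r_\ve(x))^2 \,\Delta_\Ho r_\ve(x)$ after tracing, since the parallel frame may be chosen divergence-free.

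Next I would express the horizontal partial Ricci of $g_\ve$ along $\gamma$ through the Bott-connection tensors of the foliation. The identity established in \cite{BGKT} decomposes it, up to cross terms vanishing under the totally geodesic and Yang--Mills assumptions, as
\begin{equation*}
  \mathbf{Ric}_\Ho(\dot\gamma^\Ho,\dot\gamma^\Ho) - \tfrac{1}{\ve}\langle \mathbf{J}^2 \dot\gamma^\Ho,\dot\gamma^\Ho\rangle_\Ho - \tfrac{1}{4}\,\mathbf{Tr}(J^2_{\dot\gamma^\V}).
\end{equation*}
Because the horizontal and vertical norms of $\dot\gamma$ are conserved along $g_\ve$-geodesics in a totally geodesic foliation, they may be evaluated at the endpoint, giving $\|\dot\gamma^\Ho\|_\Ho^2 = \Gamma(r_\ve)(x)$ and $\|\dot\gamma^\V\|_\V^2 = \Gamma^\V(r_\ve)(x)$. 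Substituting the pointwise bounds from (\ref{assumptions curvature}) produces exactly the bracket appearing in the statement, and dividing by $G(r_\ve(x))^2$ yields the claimed inequality.

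The main obstacle is the third step: the algebraic decomposition of the $g_\ve$-curvature into Bott-connection quantities, together with the conservation of the horizontal and vertical components of $\dot\gamma$ along $g_\ve$-geodesics. Both rest crucially on the totally geodesic and Yang--Mills hypotheses and form the technical heart of the computation in \cite{BGKT}; once in hand, the index-form inequality applied to $G(s)E_i(s)$ and the trace over a horizontal frame are routine, and the appearance of the factor $1/\ve$ in front of $\kappa(s)$ is naturally explained by the rescaling of the vertical metric in the canonical variation.
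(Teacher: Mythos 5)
The paper imports Theorem~\ref{comparison 1} from \cite{BGKT} without reproducing its proof, so there is no in-paper argument to compare against; evaluating your sketch on its own merits, the index-form plan has the right shape but stumbles at precisely the step you flag as the ``technical heart,'' and that step cannot simply be deferred. The concrete problem is a conflation of two connections. You build test fields $J_i=G\,E_i$ with $E_i$ Bott-parallel, and then claim the kinetic term in the second-variation formula is $n\,G'(s)^2$ ``since each $E_i$ is parallel.'' But the index form for the $g_\ve$-energy is written with the $g_\ve$-Levi-Civita connection $\nabla^\ve$, and along a geodesic $\gamma$ with a nontrivial vertical component one has $\nabla^\ve_{\dot\gamma}E_i\neq 0$ (the difference from the Bott connection is precisely the O'Neill/torsion term). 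So $\|\nabla^\ve_s(GE_i)\|^2_\ve=G'^2+G^2\|\nabla^\ve_s E_i\|^2_\ve$, and the extra piece cannot be dropped; it interacts with the discrepancy between $R^\ve$ and the Bott curvature to produce the $\tfrac{1}{\ve}\kappa$ term in the statement. Conversely, if you take $E_i$ to be $\nabla^\ve$-parallel, it will not remain horizontal, and then $\sum_i\mathrm{Hess}^\ve(r_\ve)(E_i,E_i)$ at the endpoint is no longer the horizontal trace giving $\Delta_\Ho r_\ve$. Your proposal never resolves this tension, which is exactly where the result is proved.

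Two further points would need fixing even after that. First, the endpoint vectors $e_i\in\Ho_x$ are not $g_\ve$-orthogonal to $\dot\gamma(r_\ve(x))$ (the radial vector has a horizontal component), whereas the index lemma applies to fields orthogonal to the geodesic; one must remove the radial component, which alters both the kinetic and curvature terms and requires separate accounting. Second, the claim $\|\dot\gamma^\V\|^2_\V=\Gamma^\V(r_\ve)(x)$ is off by a factor of $\ve^2$: at the endpoint $\dot\gamma=\nabla^\ve r_\ve=\nabla_\Ho r_\ve+\ve\,\nabla_\V r_\ve$, so $\|\dot\gamma^\V\|^2_\V=\ve^2\Gamma^\V(r_\ve)(x)$. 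The missing $\ve^{-2}$ must be supplied by the $\ve$-scaling of the curvature of $g_\ve$ in mixed directions, and this bookkeeping is what makes the coefficient of $\rho_2$ come out $\ve$-free in the statement. For reference, the route actually taken in \cite{BGKT} avoids all three problems by working throughout with the Bott connection and a horizontal Bochner formula, deriving a Riccati-type differential inequality for $\Delta_\Ho r_\ve$ directly along the geodesic, then integrating against $G$; your index-form version could in principle be repaired to match, but the work required is precisely the content you are deferring.
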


\begin{corollary}[\cite{BGKT}]\label{comparison 2}

  Assume that the functions $\rho_1,\kappa,\rho_2$ are constant.
  Denote
  \[
    \kappa_\varepsilon= \min \left(\rho_1-\frac{\kappa}{\varepsilon},
      \frac{\rho_2} {\varepsilon} \right).
  \]
  For $x\neq x_0 \in \M $, not in the $d_\varepsilon$ cut-locus of
  $x_0$,
  \begin{equation}\label{lc2}
    \Delta_{\mathcal{H}} r_{\varepsilon}(x) \le \begin{cases}
      \sqrt {n\kappa_\varepsilon} \cot(\sqrt {\frac{\kappa_\varepsilon}{n}} r_{\varepsilon}(x)),
      &\text{if}\ \kappa_{\varepsilon} >0,
      \\
      \displaystyle\frac{n}{r_{\varepsilon}(x)},&\text{if}\ \kappa_{\varepsilon} = 0,
      \\
      \sqrt{n|\kappa_\varepsilon|} \coth(\sqrt{\frac{|\kappa_\varepsilon|}{n}} r_{\varepsilon}(x)),
      &\text{if}\ \kappa_{\varepsilon} <0.
    \end{cases}
  \end{equation}
\end{corollary}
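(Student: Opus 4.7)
The plan is to apply Theorem \ref{comparison 1} with a Jacobi-type test function $G$, after first collapsing the two curvature terms appearing in its right-hand side into a single constant lower bound $\kappa_\varepsilon$.

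The starting observation is that outside $\Cut_\varepsilon(x_0)$ the function $r_\varepsilon$ is the Riemannian distance for the metric $g_\varepsilon = g_\Ho \oplus \varepsilon^{-1} g_\V$, so $\|\nabla^{g_\varepsilon} r_\varepsilon\|_{g_\varepsilon}^2 \equiv 1$. Decomposing this norm squared along the $g$-orthogonal splitting $T\M = \Ho \oplus \V$ yields the pointwise identity
\[
\Gamma(r_\varepsilon)(x) + \varepsilon\,\Gamma^\V(r_\varepsilon)(x) = 1.
\]
By definition of $\kappa_\varepsilon = \min(\rho_1 - \kappa/\varepsilon,\ \rho_2/\varepsilon)$ one has $\rho_1 - \kappa/\varepsilon \ge \kappa_\varepsilon$ and $\rho_2 \ge \varepsilon\kappa_\varepsilon$. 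Combining these with the nonnegativity of $\Gamma(r_\varepsilon)$ and $\Gamma^\V(r_\varepsilon)$ gives the key inequality
\[
\Bigl(\rho_1 - \frac{\kappa}{\varepsilon}\Bigr)\Gamma(r_\varepsilon)(x) + \rho_2\,\Gamma^\V(r_\varepsilon)(x) \ge \kappa_\varepsilon\bigl(\Gamma(r_\varepsilon)(x) + \varepsilon\,\Gamma^\V(r_\varepsilon)(x)\bigr) = \kappa_\varepsilon.
\]
Substituting this into Theorem \ref{comparison 1} (which is monotone in the bracketed quantity because it is subtracted against $G^2 \ge 0$) I obtain, for any admissible $G$,
\[
\Delta_\Ho r_\varepsilon(x) \le \frac{1}{G(r_\varepsilon(x))^2} \int_0^{r_\varepsilon(x)} \bigl( n\,G'(s)^2 - \kappa_\varepsilon\,G(s)^2 \bigr)\, ds.
\]

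Next I optimize by choosing $G$ to be the Jacobi-type solution of $n\,G'' + \kappa_\varepsilon\, G = 0$ with $G(0)=0$ and $G'(0)=1$: explicitly, $G(s) = \sin(\sqrt{\kappa_\varepsilon/n}\,s)/\sqrt{\kappa_\varepsilon/n}$ if $\kappa_\varepsilon > 0$ (on the range where $G$ remains positive), $G(s) = s$ if $\kappa_\varepsilon = 0$, and $G(s) = \sinh(\sqrt{|\kappa_\varepsilon|/n}\,s)/\sqrt{|\kappa_\varepsilon|/n}$ if $\kappa_\varepsilon < 0$. The equation $n\,G'' = -\kappa_\varepsilon G$ gives the primitive identity
\[
n\,G'(s)^2 - \kappa_\varepsilon\, G(s)^2 = \bigl(n\,G'(s)\,G(s)\bigr)',
\]
so the integral telescopes to the boundary term $n\,G'(r_\varepsilon(x))\,G(r_\varepsilon(x))$, and the inequality reduces to
\[
\Delta_\Ho r_\varepsilon(x) \le \frac{n\,G'(r_\varepsilon(x))}{G(r_\varepsilon(x))}.
\]
The three logarithmic derivatives $n(\log G)'$ evaluate respectively to $\sqrt{n\kappa_\varepsilon}\cot(\sqrt{\kappa_\varepsilon/n}\,r_\varepsilon(x))$, $n/r_\varepsilon(x)$, and $\sqrt{n|\kappa_\varepsilon|}\coth(\sqrt{|\kappa_\varepsilon|/n}\,r_\varepsilon(x))$, which are the three cases of \eqref{lc2}.

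The only genuine content lies in the first step: the packaging of two separate lower bounds, one for the horizontal and one for the vertical direction, into a single effective Ricci-type constant $\kappa_\varepsilon$. The identity $\Gamma(r_\varepsilon) + \varepsilon\,\Gamma^\V(r_\varepsilon) = 1$ is what makes this possible, and it also explains why $\rho_2$ enters the minimum as $\rho_2/\varepsilon$. Once this reduction is in place, the rest is the classical Laplacian comparison calculation with effective dimension $n$ and effective curvature $\kappa_\varepsilon$, and the explicit Jacobi test functions above are precisely those that achieve equality on the constant-curvature model in dimension $n$.
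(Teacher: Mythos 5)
Your derivation is correct and is essentially the intended argument: the paper states Corollary \ref{comparison 2} without proof (it is imported from \cite{BGKT}), and the passage from Theorem \ref{comparison 1} there is exactly your reduction --- the eikonal identity $\Gamma(r_\varepsilon)+\varepsilon\,\Gamma^\V(r_\varepsilon)=1$ collapsing the bracket to the single constant $\kappa_\varepsilon=\min(\rho_1-\kappa/\varepsilon,\rho_2/\varepsilon)$, followed by the Jacobi-type choice of $G$ with $nG''+\kappa_\varepsilon G=0$, $G(0)=0$, which telescopes the integral to $nG'G$. The one point you gloss over is the case $\kappa_\varepsilon>0$ with $r_\varepsilon(x)\ge \pi\sqrt{n/\kappa_\varepsilon}$, where your $G$ fails to be positive on all of $(0,r_\varepsilon(x)]$; this is resolved by the accompanying Bonnet--Myers type diameter bound (also in \cite{BGKT}), which excludes such points, and is worth a sentence if you write this up.
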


\section{It\^o's formula for radial processes} \label{sec:RadialProcess}

Let $( ( \xi_t )_{t \ge 0} , ( \P_x )_{x \in \M} )$ be the subelliptic
diffusion process generated by $\Delta_{\ch}$ and let $\zeta$ denote its
lifetime. We will refer to~$\xi$
as the horizontal Brownian motion of the foliation or as the
sub-Riemannian Brownian motion (in particular, here our Brownian motion is normalized to have $\Delta_{\ch}$ as its generator, rather than $\frac{1}{2}\Delta_{\ch}$). Note that $\xi$ admits a smooth heat
kernel $p_t (x,y)$ by the hypoellipticity of $\Delta_{\ch}$.  Take
$x_0 \in \M$ and set $r_\ve (x) := d_{\ve} ( x_0 , x )$.  We denote
the open $g_\ve$-metric ball of radius $r$ centered at $x$ by
$B^\ep_r (x)$, where $\ep \in [ 0 , \infty )$. The goal of this subsection is to show the following It\^o formula for
the radial processes $r_\ve ( \xi_t )$:

\begin{theorem} \label{th:Ito-radial} For each $x_1 \in \M$, if $\xi_0 = x_1$, then there
  exists a non-decreasing continuous process $l_t$ which increases only
  when $\xi_t \in \Cut_\ve (x_0)$ and a martingale $\be_t$ on $\R$
  with quadratic variation $\ang{\be}_t \le 2 t$ such that
  \begin{equation} \label{eq:Ito-radial} r_\ve ( \xi_{ t \wedge \zeta
    } ) = r_\ve ( x_1 ) + \be_t + \int_0^{ t \wedge \zeta }
    \Delta_{\ch} r_\ve ( \xi_s ) ds - l_{ t \wedge \zeta }
  \end{equation}
  holds $\P_{x_1}$-almost surely.
\end{theorem}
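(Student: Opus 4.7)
The plan is to adapt Kendall's classical argument for It\^o's formula on the Riemannian distance to the present subelliptic setting. By Lemma \ref{cutlocus}(c) together with Theorem \ref{p:cutapprox}, the function $r_\ve$ is smooth on the open set $\M \setminus (\Cut_\ve(x_0) \cup \{x_0\})$, so It\^o's formula applied to $r_\ve(\xi_t)$ while $\xi$ stays there yields a decomposition $r_\ve(\xi_t) = r_\ve(x_1) + \be_t + \int_0^t \Delta_\ch r_\ve(\xi_s)\,ds$ in which $\be_t$ is a continuous local martingale. Because the generator of $\xi$ is $\Delta_\ch$ (not $\tfrac12\Delta_\ch$) one has $d\ang{\be}_s = 2\|\nabla_\ch r_\ve\|_\ch^2\,ds$; since $r_\ve$ is the $g_\ve$-Riemannian distance, its $g_\ve$-gradient has unit $g_\ve$-norm almost everywhere, and horizontal projection gives $\|\nabla_\ch r_\ve\|_\ch^2 \le 1$, whence $\ang{\be}_t \le 2t$.

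The central difficulty is globalizing this decomposition across $\Cut_\ve(x_0)$. Here I would use the local semi-concavity of $r_\ve$ provided by Lemma \ref{cutlocus}(a). For any $y \in \M\setminus\{x_0\}$ and any length-minimizing $g_\ve$-geodesic $\gamma$ from $x_0$ to $y$, the function $\phi_\delta(z) := \delta + d_\ve(\gamma(\delta),z)$ is smooth in a neighborhood of $y$ for small $\delta > 0$ (since $y \notin \Cut_\ve(\gamma(\delta))$), satisfies $\phi_\delta \ge r_\ve$ locally, and touches $r_\ve$ at $y$. Applying It\^o's formula to such smooth upper barriers yields an upper bound on the drift of $r_\ve(\xi_t)$ of the form $\Delta_\ch \phi_\delta(y)$; sending $\delta \to 0$ and invoking the uniform convergence of Theorem \ref{p:cutapprox} on the smooth regime identifies the drift as $\Delta_\ch r_\ve(\xi_s)\,ds$ whenever $\xi_s \notin \Cut_\ve(x_0)$. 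Writing the semimartingale decomposition of the continuous process $r_\ve(\xi_{t\wedge\zeta})$ as $r_\ve(x_1) + \be_t + A_t$, semi-concavity then forces $A_t = \int_0^{t\wedge\zeta}\Delta_\ch r_\ve(\xi_s)\,ds - l_t$ with $l_t$ non-decreasing, the term $-l_t$ accounting for the non-positive singular part of the distributional $\Delta_\ch r_\ve$ concentrated on $\Cut_\ve(x_0)$.

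The main technical obstacle is to make this global decomposition rigorous, notably ensuring that $\int_0^t \Delta_\ch r_\ve(\xi_s)\,ds$ is well-defined and that $l_t$ grows only when $\xi_t \in \Cut_\ve(x_0)$. That $\xi$ spends zero Lebesgue time in $\Cut_\ve(x_0)$ follows from Lemma \ref{cutlocus}(b), which implies that the cut locus has zero Riemannian volume, combined with absolute continuity of the law of $\xi_s$ under $\P_{x_1}$ with respect to $\mu$, guaranteed by the smoothness of the hypoelliptic heat kernel $p_s(x,y)$. The non-decrease of $l_t$ and its support on the cut locus are inherited from semi-concavity via a Revuz-type correspondence between positive continuous additive functionals and the negative singular part of the distributional Laplacian. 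Finally, localization by exit times from relatively compact subsets of $\M \setminus \{x_0\}$ followed by a standard patching argument promotes the local statements to \eqref{eq:Ito-radial} on $[0,\zeta)$.
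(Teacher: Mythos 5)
Your plan is essentially the same as the paper's: Kendall-style upper barriers built from a point $\tilde x_0=\gamma(\delta)$ on a minimizing geodesic, supermartingale/semiconcavity to obtain a nondecreasing correction $l_t$, negligibility of the occupation time of $\Cut_\ve(x_0)$ via absolute continuity of the law of $\xi_s$ and $\mu(\Cut_\ve(x_0))=0$, and a localization argument. Your barrier $\phi_\delta$ is precisely the $\tilde r^+$ used in the paper's Lemma~\ref{lem:fund}. That said, there are several concrete gaps.

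First, you do not address the polarity of $\{x_0\}$. Localization by exit times from relatively compact subsets of $\M\setminus\{x_0\}$ only gives \eqref{eq:Ito-radial} up to the hitting time of $x_0$; to extend to $[0,\zeta)$ you need $\P_{x_1}\{\exists t>0: \xi_t=x_0\}=0$. In a subelliptic setting this is not automatic from the ambient dimension and, in the paper, requires a capacity argument (Lemma~\ref{lem:neg_origin}), namely a volume growth bound $\mu(B^\ve_r(x_0))\le C_\ve r^n$ together with Sturm's criterion and Fukushima--Oshima--Takeda to conclude that $\{x_0\}$ is exceptional. Omitting this leaves the global statement unproved.

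Second, the passage ``sending $\delta\to 0$ and invoking the uniform convergence of Theorem \ref{p:cutapprox}'' is misdirected. Theorem~\ref{p:cutapprox} concerns the limit $\ve\downarrow 0$ of $r_\ve$ on the complement of $\Cut_0(x_0)$, not the limit of your barriers $\phi_\delta$ as $\delta\to 0$. Off $\Cut_\ve(x_0)$ the drift is already $\Delta_\ch r_\ve(\xi_s)\,ds$ directly by It\^o; the barriers are needed on neighborhoods of cut times, and there their drift does not converge to $\Delta_\ch r_\ve$ (which is only distributional with singular part). The paper instead fixes $\tilde x_0=\gamma((3R_2)^{-1})$, uses a uniform bound $\bar V$ from Corollary~\ref{comparison 2} on a compact region, and controls the error via the stopping times $S_k^\delta,T_k^\delta$ and a measure-zero argument (Lemma~\ref{lem:null}). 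Your appeal to a ``Revuz-type correspondence'' for $l_t$ likewise replaces a concrete step: the paper builds $l^{\delta,\mathbf{R}}_t$ explicitly, shows it is nondecreasing from the supermartingale property, and passes to the limit using Fatou together with integrability of $\Delta_\ch r_\ve(\xi_s)$; this also certifies that $l_t$ increases only on $\{\xi_t\in\Cut_\ve(x_0)\}$, which a generic Revuz correspondence does not give you for free since you have not identified the relevant smooth measure.

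Third, your quadratic variation bound is established only on the set where $r_\ve$ is smooth. To conclude $\ang{\be}_t\le 2t$ globally you must identify the martingale part of $r_\ve(\xi_{t\wedge\cdot})$ across the cut locus with the stochastic integral $\sqrt2\sum_i\int X_i r_\ve(\xi_s)\,dW^i_s$; this is the content of the paper's Lemma~\ref{lem:martingale}, where the contribution of $\delta$-neighborhoods of cut times is shown to vanish using Lemma~\ref{lem:null}. Without this, the inequality $\|\nabla_\ch r_\ve\|_\ch\le 1$ on the smooth set does not automatically control $\ang{\be}_t$ on all of $[0,\zeta)$.
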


We begin the proof with some preparatory lemmas.  The following is the
usual It\^o formula for a smooth function in a local chart.  Let $U$
be an open local chart of $\M$ in which we have
$\Delta_{\ch} = \sum_{i=1}^n X_i^2 + X_0$ with a family of vector
fields $X_0 , X_1 , \ldots , X_n$ on $U$, and let $\xi_t$ satisfy the
stochastic differential equation
\begin{equation} \label{eq:SDE} d \xi_t = \sum_{i=1}^n \sqrt{2} X_i (
  \xi_t ) \circ d W^i_t + X_0 ( \xi_t ) dt,
\end{equation}
where $( W^1_t , \ldots , W^n_t )$ is a Brownian motion on $\R^n$ (here $( W^1_t , \ldots , W^n_t )$ is a standard Euclidean Brownian motion, generated by $\frac{1}{2}$ the Laplacian, which explains the factors of $\sqrt{2}$ in the SDE).
Let $\tau$ be the first exit time from $U$ of~$\xi_t$,
i.e.~$\tau : = \inf \{ t \ge 0 \; | \; \xi_t \notin U \}$.

\begin{lemma} \label{lem:Ito1} For any $U$-valued random variable $S$
 independent of $W$ and smooth function $f : U \to \R$, we have
  \[
    f ( \xi_{t\wedge\tau} ) = f ( S ) + \sum_{i=1}^n \sqrt{2} \int_0^{t\wedge\tau} X_i f (
    \xi_s ) dW^i_s + \int_0^{t\wedge\tau} \Delta_{\ch} f ( \xi_s ) ds,
  \]
  $\P_S$-almost surely.
\end{lemma}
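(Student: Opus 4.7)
The plan is to observe that this is essentially the textbook Stratonovich/Itô calculus applied to the SDE~\eqref{eq:SDE}, with the only mild subtlety being how to accommodate a random (rather than deterministic) initial value $S$ that is independent of the driving Brownian motion $W$.

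I would first treat the case of a deterministic starting point, say $\xi_0 = x \in U$, and apply the Stratonovich chain rule to $f(\xi_t)$: by~\eqref{eq:SDE} and smoothness of $f$,
\[
df(\xi_t) = \sum_{i=1}^n \sqrt{2}\, X_i f(\xi_t) \circ dW^i_t + X_0 f(\xi_t)\, dt.
\]
I would then convert each Stratonovich differential to its Itô counterpart via the standard correction $Y \circ dW^i = Y\, dW^i + \tfrac{1}{2} d[Y,W^i]$. Applying Itô's formula to $Y_t := \sqrt{2}\, X_i f(\xi_t)$, only the martingale part of $dY_t$ contributes to the covariation, and from~\eqref{eq:SDE} this martingale part is $\sum_j \sqrt{2}\, X_j(X_i f)(\xi_t)\, dW^j_t$. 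Consequently $d[Y,W^i]_t = 2\, X_i^2 f(\xi_t)\, dt$, so the correction produces $X_i^2 f(\xi_t)\, dt$. Summing over $i$ and combining with the drift $X_0 f$ gives exactly $\sum_i X_i^2 f + X_0 f = \Delta_{\ch} f$, establishing the identity for deterministic initial conditions.

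To insert the stopping time $\tau$, I would apply optional stopping to the local martingale part (which is legitimate since $\tau$ is a stopping time for the filtration generated by $W$), or equivalently localize the SDE on $U$ so that the equation is valid up to $\tau$; this gives the claimed identity $\P_x$-almost surely, with all stochastic integrals stopped at $t \wedge \tau$.

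Finally, to pass from a deterministic $x$ to the random initial value $S$ independent of $W$, I would condition on $S$ and use a regular conditional probability argument: for every bounded measurable test event $A$ depending on the sample path, the deterministic case yields
\[
\P_x\bigl[\,A\,\bigr] \text{ measurable in } x,
\]
so integrating against the law of $S$ and using independence of $S$ from $W$ (hence from the Itô integrals against $W$) gives the statement $\P_S$-a.s. The main (and only) obstacle here is purely notational bookkeeping around this conditioning; the analytic content is the routine Stratonovich-to-Itô conversion, and I do not anticipate any genuine difficulty.
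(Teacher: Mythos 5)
Your proposal is correct in substance. Note first that the paper itself offers no proof of Lemma~\ref{lem:Ito1}: it introduces the statement with the phrase ``the usual It\^o formula for a smooth function in a local chart'' and proceeds directly to the subsequent lemmas, so there is no paper argument to compare against. Your Stratonovich-to-It\^o derivation is the standard argument that the authors are implicitly invoking, and the conditioning-on-$S$ step at the end correctly handles the random initial point using independence from~$W$ (together with measurability of $x\mapsto\P_x[A]$, which follows from measurable dependence of solutions of~\eqref{eq:SDE} on their initial condition).

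One small bookkeeping slip worth flagging: with $Y_t := \sqrt{2}\,X_i f(\xi_t)$, the martingale part of $dY_t$ coming from~\eqref{eq:SDE} is $\sum_j 2\,X_j(X_i f)(\xi_s)\,dW^j_s$, not $\sum_j \sqrt{2}\,X_j(X_i f)(\xi_s)\,dW^j_s$ as written (you appear to have written the martingale part of $d\bigl(X_i f(\xi_t)\bigr)$ rather than of $dY_t$). Your subsequent identity $d[Y,W^i]_t = 2\,X_i^2 f(\xi_t)\,dt$ is nevertheless the correct one, so the correction term is $X_i^2 f(\xi_t)\,dt$ per coordinate and the sum recovers $\Delta_\ch f = \sum_{i=1}^n X_i^2 f + X_0 f$ as claimed. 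This is a typographical inconsistency in the intermediate display, not a gap in the argument.
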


Next we show the following two auxiliary lemmas which concern the
occupation time of $\xi_t$ at singular points of $r_\ve$.  The proof
is almost the same as the one for Riemannian manifolds, but we give it
for completeness.

\begin{lemma} \label{lem:neg_cutlocus} For $\ve>0$, the set
  $ \{ t \in [0, \infty) \, | \, \xi_t \in \Cut_\ve (x_0)\} $ has
  Lebesgue measure zero $\P_{x_1}$-almost surely.
\end{lemma}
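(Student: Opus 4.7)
The plan is to combine Fubini's theorem with the absolute continuity of the one-dimensional time marginals of $\xi$ with respect to $\mu$. First I would compute, using Fubini,
\[
\E_{x_1}\edg{\int_0^\infty \mathbf{1}_{\Cut_\ve(x_0)}(\xi_t)\,\mathbf{1}_{\{t<\zeta\}}\,dt}
= \int_0^\infty \int_{\Cut_\ve(x_0)} p_t(x_1,y)\,d\mu(y)\,dt.
\]
The right-hand side uses the fact that, by hypoellipticity of $\Delta_{\ch}$, for each $t>0$ the sub-probability measure $\P_{x_1}(\xi_t \in \cdot,\ t<\zeta)$ has the smooth density $p_t(x_1,\cdot)$ with respect to $\mu$. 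Thus everything reduces to showing that $\mu(\Cut_\ve(x_0))=0$.

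For this measure-zero step I would exploit that for $\ve>0$ the metric $g_\ve$ is a genuine smooth complete Riemannian metric. Since $\mu$ differs from the Riemannian volume of $g_\ve$ only by a smooth positive factor, the two measures share the same null sets, and so $\mu(\Cut_\ve(x_0))=0$ follows from the classical fact that the cut locus of a point in a complete Riemannian manifold has Lebesgue measure zero (indeed Hausdorff dimension at most $n+m-1$). This classical statement can itself be extracted from Lemma \ref{cutlocus}(a): the local semi-concavity of $r_\ve$ on $\M\setminus\{x_0\}$ implies that $r_\ve$ is twice differentiable almost everywhere, and combined with the standard decomposition of $\Cut_\ve(x_0)$ into the first conjugate locus (the $\exp$-image of a critical set of the differential) and the set of points admitting two distinct length-minimizing geodesics (parameterizable by a lower-dimensional object), one concludes that $\Cut_\ve(x_0)$ has measure zero.

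Putting the two steps together, the double integral displayed above vanishes, so $\int_0^\infty \mathbf{1}_{\Cut_\ve(x_0)}(\xi_t)\,\mathbf{1}_{\{t<\zeta\}}\,dt = 0$ holds $\P_{x_1}$-almost surely, which is exactly the claim. The only non-routine ingredient is the Riemannian cut-locus-has-measure-zero statement; once it is granted, the rest is a standard Fubini/hypoellipticity argument. Note that the hypothesis $\ve>0$ is essential: for $\ve=0$ the analogous fact for the true sub-Riemannian cut locus is considerably more delicate and would require separate arguments.
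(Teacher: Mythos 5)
Your proof is correct and follows essentially the same route as the paper: Fubini combined with the existence of a smooth heat kernel for $\Delta_\ch$ (hence absolute continuity of the time marginals with respect to $\mu$) and the classical fact that $\mu(\Cut_\ve(x_0))=0$ for the smooth Riemannian metric $g_\ve$. The paper simply cites Chavel for the measure-zero statement rather than sketching the semi-concavity/decomposition argument, but the structure and all essential ingredients are identical.
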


\begin{proof}
  Since there is a heat kernel $p_t$, the law of $\xi_t$ under
  $\P_{x_1}$ is absolutely continuous with respect to $\mu$.  In
  addition, we have $\mu ( \Cut_\ve (x_0) ) = 0$ (see \cite{Chavel},
  for instance).  By combining these facts with Fubini's theorem, we
  obtain
  \[
    \E_x \edg{ \int_0^\infty 1_{\{ \xi_s \in \Cut_\ve (x_0) \}} ds } =
    \int_0^\infty \P_x \edg{ \xi_s \in \Cut_\ve (x_0) } ds = 0.
  \]
  Hence $\ds \int_0^\infty 1_{\{ \xi_s \in \Cut_\ve (x_0) \}} ds = 0$
  $\P_{x_1}$-almost surely, and this is nothing but the conclusion.
\end{proof}

\begin{lemma} \label{lem:neg_origin}
  $\P_{x_0} \{ \xi_t = x_0 \mbox{ at some $t \in ( 0, \infty )$} \} =
  0$.
\end{lemma}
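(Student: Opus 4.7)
The plan is a supermartingale argument using a Bessel-type Green function that blows up at $x_0$. First I would reduce to showing that for every $y\neq x_0$, $\P_y\{\exists\, t>0 : \xi_t = x_0\} = 0$. Granting this, the strong Markov property at each deterministic time $1/k$, combined with the fact that $\xi_{1/k}$ has the smooth density $p_{1/k}(x_0,\cdot)$ and so $\xi_{1/k}\neq x_0$ almost surely (because $\mu(\{x_0\})=0$), gives
\[
\P_{x_0}\{\exists\, t > 1/k : \xi_t = x_0\} = \E_{x_0}\bigl[\P_{\xi_{1/k}}\{\exists\, t>0: \xi_t = x_0\}\bigr] = 0,
\]
and the lemma follows from the countable union identity $\{\exists\, t>0: \xi_t = x_0\} = \bigcup_{k\geq 1}\{\exists\, t>1/k: \xi_t = x_0\}$.

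To prove the reduced claim, I would fix some $\varepsilon>0$ and a neighborhood $U$ of $x_0$ contained in the $g_\varepsilon$-injectivity radius, on which $r_\varepsilon^2$ is smooth with $r_\varepsilon^2(x_0)=0$ and Hessian $2g_\varepsilon$ at $x_0$. For the Bessel-type function $f$ defined by $f(x)=x^{-(n-2)/2}$ if $n>2$ and $f(x)=-\log x$ if $n=2$, I would apply Lemma~\ref{lem:Ito1} to $f\circ r_\varepsilon^2(\xi_t)$. A direct computation using $\Dh r_\varepsilon^2(x_0)=2n$ (twice the horizontal dimension), the identity $\|\nabla_\ch r_\varepsilon^2\|^2 = 4 r_\varepsilon^2 \|\nabla_\ch r_\varepsilon\|^2$, and the bound $\|\nabla_\ch r_\varepsilon\|^2 \leq 1$ shows that the singular term in $\Dh(f\circ r_\varepsilon^2)$ cancels at leading order at $x_0$, leaving $\Dh(f\circ r_\varepsilon^2) \leq C\cdot f\circ r_\varepsilon^2$ on $U$ for some constant $C$. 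Consequently $e^{-Ct} f(r_\varepsilon^2(\xi_t))$ is a local supermartingale on $U$.

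If, starting from $y\in U\setminus\{x_0\}$, the process reached $x_0$ at some time $\sigma<\tau_U$ (where $\tau_U$ denotes the first exit of $U$), then $f(r_\varepsilon^2(\xi_\sigma))=+\infty$ would contradict the supermartingale bound
\[
\E_y\bigl[e^{-C(t\wedge\sigma\wedge\tau_U)} f\bigl(r_\varepsilon^2(\xi_{t\wedge\sigma\wedge\tau_U})\bigr)\bigr] \leq f(r_\varepsilon^2(y)) < \infty,
\]
which holds for all $y\neq x_0$. Thus $\xi$ never reaches $x_0$ before leaving $U$. For $y\notin U$ the process must first enter $U$ before it can reach $x_0$; iterating over the (at most countable, via a nested annular ladder) excursions into $U$ and applying the strong Markov property at each entry yields $\P_y\{\xi \text{ ever hits } x_0\}=0$ for every $y\neq x_0$.

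The main obstacle is the verification of the local supermartingale inequality $\Dh(f\circ r_\varepsilon^2)\leq C\cdot f\circ r_\varepsilon^2$ near $x_0$; this in the end reduces to the elementary inequality $\|\nabla_\ch r_\varepsilon\|^2 \leq 1$ together with the explicit value $\Dh r_\varepsilon^2(x_0)=2n$, and it works cleanly whenever $n\geq 2$, which is the case in the sub-Riemannian settings considered in the paper.
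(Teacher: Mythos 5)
Your proposal takes a genuinely different route from the paper. The paper avoids any pointwise computation: it observes that $\mu(B^0_r(x_0))\le\mu(B^\ve_r(x_0))\le C_\ve r^n$ with $n\ge 2$, invokes Sturm's capacity estimate \cite{St0} (for the Dirichlet form $\mathcal E_\ch$ whose intrinsic metric is $d_0$) to conclude that $\{x_0\}$ is exceptional, and then uses the Fukushima--Oshima--Takeda theory to upgrade this to polarity. Your proof instead constructs an explicit superharmonic barrier blowing up at $x_0$ and uses the supermartingale property plus the strong Markov property. Both strategies are legitimate in principle, and yours has the pedagogical advantage of being hands-on. Your reduction step via the strong Markov property at times $1/k$ and the absolute continuity of the law of $\xi_{1/k}$ is clean and correct.

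However, there is a genuine gap in the verification of the supermartingale inequality $\Dh(f\circ r_\ve^2)\le C\,(f\circ r_\ve^2)$. Writing $f(u)=u^{-\alpha}$ with $\alpha=(n-2)/2$ and using $\|\nabla_\ch r_\ve^2\|^2_\ch=4r_\ve^2\|\nabla_\ch r_\ve\|^2_\ch$, one finds
\[
\Dh\bigl(f\circ r_\ve^2\bigr)=\alpha\,r_\ve^{-n}\Bigl[-\Dh r_\ve^2+2n\,\|\nabla_\ch r_\ve\|_\ch^2\Bigr]
= \alpha\,r_\ve^{-n}\Bigl[2n\bigl(\|\nabla_\ch r_\ve\|_\ch^2-1\bigr)-\bigl(\Dh r_\ve^2-2n\bigr)\Bigr].
\]
The first term in the bracket is $\le 0$ by $\|\nabla_\ch r_\ve\|_\ch\le 1$, as you say. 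But the second term requires an estimate on $\Dh r_\ve^2 - 2n$ near $x_0$, and here is the problem: $\Dh r_\ve^2$ is smooth with value $2n$ at $x_0$, so generically $\Dh r_\ve^2 - 2n = O(r_\ve)$, not $O(r_\ve^2)$. For the full Riemannian Laplacian one has $\Delta r^2=2d+O(r^2)$ because the metric tensor is even to first order in normal coordinates, but for the sub-Laplacian there is no analogous parity: the horizontal frame $\{X_i\}$ and the drift $X_0$ in $\Dh=\sum X_i^2 + X_0$ need not be compatible with the geodesic reflection through $x_0$, so the linear term in the Taylor expansion of $\Dh r_\ve^2$ can be nonzero. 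Consequently the bracket is only $O(r_\ve)$, giving $\Dh(f\circ r_\ve^2)\lesssim r_\ve^{-(n-1)}$, which exceeds $C\cdot r_\ve^{-(n-2)}=C\,f(r_\ve^2)$ by a factor of $r_\ve^{-1}$; the exponential compensation $e^{-Ct}$ then does not produce a supermartingale. The borderline exponent $\alpha=(n-2)/2$ is exactly what makes the argument fragile: choosing $\alpha\in(0,(n-2)/2)$ strictly makes the leading constant $4(\alpha+1)-2n$ strictly negative and dominates the $O(r_\ve)$ error, repairing the argument when $n\ge 3$. The case $n=2$ (e.g.~the Heisenberg group) then needs a further modification, since $f(u)=-\log u$ suffers the same problem: $\Dh(-\log r_\ve^2)\lesssim r_\ve^{-1}$, which is not $O(\log(1/r_\ve))$. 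A power-of-log barrier $f(u)=(-\log u)^\beta$ with $\beta\in(0,1)$ works there, but this is a noticeably more delicate calculation than the proposal acknowledges. In contrast, the paper's capacity argument sidesteps all of these pointwise asymptotics and only needs the crude volume bound, which is one reason it is the route taken.
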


\begin{proof}
  We begin with noting that by definition the horizontal Laplacian
  $\Delta_\Ho$ is the generator of the Dirichlet form:
  \[
    \mathcal{E}_{\mathcal{H}} (f,g) =-\int_\bM \langle
    \nabla_\mathcal{H} f , \nabla_\mathcal{H} g \rangle_{\mathcal{H}}
    \,d\mu, \quad f,g \in \mathcal{D} ( \mathcal{E}_\ch ) .
  \]

Observe also that
  $\mu ( B^0_r ( x_0 ) ) \le \mu ( B^\ep_r (x_0) ) \le C_\ep r^n$ for
  small $r > 0$.  This yields that $\{ x_0 \}$ is exceptional by
  applying \cite[Theorem~3]{St0}.  The assumption of the theorem in
  \cite{St0} is satisfied since the Brownian motion $\xi$ is
  associated with the Dirichlet form
  $( \mathcal{E}_\ch , \mathcal{D} ( \mathcal{E}_\ch ) )$ and the
  distance $d_0$ coincides with the intrinsic distance associated with
  $\mathcal{E}_\ch$.  Then, by \cite[Theorem~4.1.2]{FOT2} and
  \cite[Lemma~4.2.4]{FOT2}, $\{ x_0 \}$ is polar. Thus the claim
  holds.
\end{proof}

For $\mathbf{R} = (R_1 , R_2 )$ with $R_1 > R^{-1}_2 > 0$, we define
stopping times $T^{(i)}_{R_i}$ ($i=1,2$) and $T_{\mathbf{R}}$ by
\begin{align*}
  T^{(1)}_{R_1} 
  & : = 
    \inf \crl{ 
    t \ge 0 
    \; | \; 
    r_\ve ( \xi_t ) \ge R_1 
    },
  \\
  T^{(2)}_{R_2} 
  & : = 
    \inf \crl{ 
    t \ge 0 
    \; \left| \; 
    r_\ve ( \xi_t ) \le {1}/{R_2} 
    \right.
    }, 
  \\
  T_{\mathbf{R}} 
  & := 
    T^{(1)}_{R_1} \wedge T^{(2)}_{R_2} , 
\end{align*}
where $a \wedge b = \min \{ a, b \}$ for
$a, b \in \R \cup \{ \pm \infty \}$.  We take $R_2$ sufficiently large so that
$d_\ve ( x_0 , \Cut_\ve (x_0) ) > R_2^{-1}$ holds, and from now on, we fix $R_1$ and
$R_2$ until the final part of the proof of Theorem
\ref{th:Ito-radial}.  Let us define a
set $A$ by
\[
  A := \left\{ ( x , y ) \in \M \times \M \, \left| \,
      \begin{array}{l}
        d_\ve ( x_0 , x ) \in [ R_2^{-1} , R_1 ], \, 
        d_\ve (x_0,y) = ( 3 R_2 )^{-1}
        \\
        \mbox{ and }
        d_\ve (x,y) = d_\ve (x_0 , x) - d_\ve (x_0 , y)
      \end{array}
    \right.  \right\}.
\]
Note that $A$ is compact since $d_\ve (x,y)$ is continuous as a
function of $x$ and $y$.  For $(x,y) \in A$, $y$ is on a minimal
geodesic joining $x_0$ and $x$.  In addition,
$A \cap \Cut_\ve(\M) = \emptyset$ holds since we can extend the minimal
geodesic from $x$ to $y$ with keeping its minimality.  By combining
these facts, we conclude
\[
  \delta_1 : = \inf \crl{ d_\ve ( x, x' ) + d_\ve ( y, y' ) \mid ( x,
    y ) \in A , \, ( x' , y' ) \in \Cut_\ve(\M) } \wedge \frac{1}{ 3 R_2 }
  > 0.
\]
Since we can take $\rho_1 , \rho_2 , \kappa$ to be constants on
$B^\ve_{ R_1 + ( 3 R_2 )^{-1} } (x_0)$, Corollary \ref{comparison 2}
yields that there is a continuous function
$V : ( 0 , R_1 + ( 3 R_2 )^{-1} ) \to [ 0, \infty )$ such that
\begin{equation} \label{eq:dominant} \Delta_{\ch} d_\ve ( x , \cdot )
  (y) \le V ( d_\ve ( x, y ) )
\end{equation}
holds for $x \in B^\ve_{ ( 3 R_2 )^{-1} } (x_0)$ and
$y \in B^\ve_{ R_1 } (x_0) \setminus \Cut_\ve (x)$.  Set $ \bar{V} : =
\sup_{( 3 R_2 )^{-1} \le r \le R_1} V ( r ) $.

\begin{lemma}\label{lem:fund}
  Let $x \in \Cut_\ve (x_0) \cap B_{R_1}^\ve (x_0)$ and
  $\delta \in (0, \delta_1 )$. Set
$$\tilde{T} := \inf \{ t \ge 0 \, | \, d_\ve (x , \xi_t ) \ge \delta \}.$$
Then
\[
  \E_{x} \left[ d_\ve ( x_0 , \xi_{ t \wedge \tilde{T} \wedge
      T_{\mathbf{R}} } ) - d_\ve ( x_0 , x ) - ( t \wedge \tilde{T}
    \wedge T_{\mathbf{R}} ) \bar{V} \right] \le 0.
\]
\end{lemma}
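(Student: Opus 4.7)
The strategy is to bound $d_\ve(x_0, \cdot)$ above by $d_\ve(x_0, y) + d_\ve(y, \cdot)$ for a well-chosen nearby point $y$, and then apply It\^o's formula to $d_\ve(y, \cdot)$, which will be smooth along the stopped trajectory. Since $d_\ve(x_0, x) \ge R_2^{-1} > (3R_2)^{-1}$, I would pick $y$ on a length-minimizing $g_\ve$-geodesic from $x_0$ to $x$ with $d_\ve(x_0, y) = (3R_2)^{-1}$, which forces $(x, y) \in A$. The triangle inequality then gives $d_\ve(x_0, z) \le d_\ve(x_0, y) + d_\ve(y, z)$ for all $z$, with equality at $z = x$, and therefore
\[ d_\ve(x_0, \xi_{t \wedge \tilde{T} \wedge T_{\mathbf{R}}}) - d_\ve(x_0, x) \le d_\ve(y, \xi_{t \wedge \tilde{T} \wedge T_{\mathbf{R}}}) - d_\ve(y, x) \]
$\P_x$-almost surely. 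The smoothness claim for $d_\ve(y, \cdot)$ along the trajectory is the key observation: for $s < \tilde{T}$ one has $d_\ve(x, \xi_s) < \delta < \delta_1$, and the definition of $\delta_1$ (together with the symmetry of $\Cut_\ve(\M)$) forces $(y, \xi_s) \notin \Cut_\ve(\M)$, so $d_\ve(y, \cdot)$ is smooth near $\xi_s$ by Lemma \ref{cutlocus}(c).

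I would then apply a chart-by-chart localization of It\^o's formula (Lemma \ref{lem:Ito1}) to $d_\ve(y, \cdot)$ along $\xi$ up to the stopping time $\tilde{T} \wedge T_{\mathbf{R}}$, obtaining
\[ d_\ve(y, \xi_{t \wedge \tilde{T} \wedge T_{\mathbf{R}}}) = d_\ve(y, x) + M_t + \int_0^{t \wedge \tilde{T} \wedge T_{\mathbf{R}}} \Delta_\ch d_\ve(y, \cdot)(\xi_s)\, ds, \]
with $M_t$ a continuous local martingale coming from the diffusion part. For $s \le \tilde{T} \wedge T_{\mathbf{R}}$ the forward and reverse triangle inequalities combined with $r_\ve(\xi_s) \in [R_2^{-1}, R_1]$ yield $d_\ve(y, \xi_s) \in ((3R_2)^{-1}, R_1 + (3R_2)^{-1})$, so both the integrand and $d_\ve(y, \xi_s)$ itself are bounded, $M$ is a true martingale, and the sub-Laplacian comparison \eqref{eq:dominant} applies to give $\Delta_\ch d_\ve(y, \cdot)(\xi_s) \le V(d_\ve(y, \xi_s)) \le \bar V$. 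Taking $\P_x$-expectations and combining with the first displayed inequality yields the desired bound.

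The main obstacle is verifying that $d_\ve(y, \cdot)$ really remains smooth along the entire stopped trajectory, i.e.\ that $\xi_s$ never reaches $\Cut_\ve(y)$ before time $\tilde{T} \wedge T_{\mathbf{R}}$. This is precisely why $\delta_1$ was defined through the compact set $A$ and the property $A \cap \Cut_\ve(\M) = \emptyset$: these two ingredients make $\delta_1 > 0$, and the hypothesis $\delta < \delta_1$ then transfers the non-cut property from the initial pair $(x, y)$ to each $(\xi_s, y)$ with $s < \tilde{T}$. The remaining technicalities---patching It\^o's formula across coordinate charts and upgrading $M$ from a local to a true martingale---are routine once the a priori bounds on $d_\ve(y, \xi_s)$ are in place.
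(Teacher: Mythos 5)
Your proposal is essentially the paper's proof: the auxiliary point $y$ is the paper's $\tilde x_0=\gamma((3R_2)^{-1})$, the comparison function $d_\ve(y,\cdot)$ is the paper's $\tilde r^+(\cdot)=d_\ve(x_0,\tilde x_0)+d_\ve(\tilde x_0,\cdot)$ up to the additive constant, and the reasoning that $\delta<\delta_1$ plus compactness of $A$ keeps the trajectory off $\Cut_\ve(\tilde x_0)$ is the same argument (you correctly flag the symmetry of the Riemannian cut locus for $\ve>0$, which the paper uses implicitly). One minor slip: to bound $V(d_\ve(y,\xi_s))$ by $\bar V=\sup_{(3R_2)^{-1}\le r\le R_1}V(r)$, your upper estimate $d_\ve(y,\xi_s)<R_1+(3R_2)^{-1}$ (via the triangle inequality through $x_0$) overshoots the interval of definition of $\bar V$; the paper instead goes through $x$, using $d_\ve(y,\xi_s)\le d_\ve(y,x)+d_\ve(x,\xi_s)\le \bigl(r_\ve(x)-(3R_2)^{-1}\bigr)+\delta\le R_1$, which lands in the required range thanks to $\delta<\delta_1\le(3R_2)^{-1}$.
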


\begin{proof}
  We choose a point $\tilde{x}_0 \in \M$ as follows: Take a minimal
  geodesic $\gamma : [ 0 , r_\ve ( x ) ] \to \M$ from $x_0$ to $x$ and
  define $\tilde{x}_0 := \gamma( ( 3 R_2 )^{-1} )$.  Then
  $( x_0 , \tilde{x}_0 ) \in A$ holds by construction.  Moreover, by
  the choice of $\delta > 0$, $\xi_t \notin \Cut_\ve (\tilde{x}_0)$
  for all $t \in [0, \tilde{T}\wedge T_{\mathbf{R}} )$ under $\P_{x}$.
  For $y \in \M$, let
  \[
    \tilde{r}^+ (y) := d_\ve ( x_0 , \tilde{x}_0 ) + d_\ve
    (\tilde{x}_0, y).
  \]
  By the choice of $\tilde{x}_0$, we have
  $\tilde{r}^+ (x) = d_\ve ( x_0 , x )$.  Moreover, by the triangle
  inequality, $\tilde{r}^+ (y) \ge d_\ve (x_0 , y)$ for all
  $y \in \M$.  By the definition of $V$
  we have
  \[
    \Delta_{\ch} \tilde{r}^+ (y) \le V ( d_\ve ( \tilde{x}_0 , y ) )
  \]
  holds for $y \in B_{R_1} (x_0) \setminus \Cut_\ve ( \tilde{x}_0 )$.
  Note that $V ( d_\ve ( \tilde{x}_0 , \xi_t ) ) \le \bar{V}$ holds
  for all $t \in [0, \tilde{T} \wedge T_{\mathbf{R}} )$ since we have
  \[
    \frac{1}{3 R_2} \le d_\ve ( x_0 , \xi_t ) - d_\ve ( x_0 ,
    \tilde{x}_0 ) \le d_\ve ( \tilde{x}_0 , \xi_t ) \le d_\ve (
    \tilde{x}_0 , x ) + d_\ve ( x , \xi_t ) \le R_1
  \]
  by the choice of $R_2$ and $\delta_1$.  Therefore
  \begin{align*}
    d_\ve ( x_0 , \xi_{ t \wedge \tilde{T} \wedge T_{\mathbf{R}} } ) 
    & 
      - d_\ve ( x_0 , x  ) 
      - ( t \wedge \tilde{T} \wedge T_{\mathbf{R}} ) \bar{V} 
    \\
    & \le 
      d_\ve ( x_0, \xi_{ t \wedge \tilde{T} \wedge T_{\mathbf{R}} } ) 
      - \tilde{r}^+( \xi_0 ) 
      - 
      \int_{0}^{t \wedge \tilde{T} \wedge T_{\mathbf{R}} } 
      V ( d_\ve ( \tilde{x}_0 , \xi_s ) ) 
      ds
    \\
    & \le 
      \tilde{r}^+ ( \xi_{ t \wedge \tilde{T} \wedge T_{\mathbf{R}} } ) 
      - \tilde{r}^+ ( \xi_0 )
      - 
      \int_{0}^{t \wedge \tilde{T} \wedge T_{\mathbf{R}} } 
      \Delta_{\ch} \tilde{r}^+ ( \xi_s ) 
      ds.
  \end{align*}
  Since $\tilde{r}^+$ is smooth on $B^\ve_\delta (x)$, the last term
  is a martingale and thus its expectation is zero.  Hence the claim
  follows.
\end{proof}

For $\delta \in (0 , \delta_1)$, we define a sequence of stopping
times $(S_k^\delta)_{m \in \mathbb{N}}$ and
$(T_k^\delta)_{m \in \mathbb{N}_0}$ by
\begin{align*}
  T_0^\delta 
  & :=  
    0,
  \\
  S_k^\delta 
  & :=  
    T_{\mathbf{R}} \wedge 
    \inf \{ 
    t \ge T_{k-1}^\delta 
    \mid
    \xi_t \in \Cut_\ve (x_0) 
    \},
  \\
  T_k^\delta 
  & :=  
    T_{\mathbf{R}} \wedge 
    \inf \{ 
    t \ge S_k^\delta 
    \mid 
    d_\ve ( \xi_{ S_k^\delta } , \xi_t ) \ge \delta
    \}
    \wedge
    ( S_k^\delta + \delta ). 
\end{align*}
\begin{proposition}\label{prop:super}
  $ r_\ve( \xi_{ t \wedge T_{\mathbf{R}} } ) - r_\ve ( \xi_0 ) - ( t
  \wedge T_{\mathbf{R}} ) \bar{V} $ is a supermartingale.
\end{proposition}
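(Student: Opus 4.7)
The plan is to establish the proposition by proving, for an arbitrary starting point $y\in \M$ with $r_\ve(y)\in[R_2^{-1},R_1]$, the one-point expectation bound
\[
 \E_y\bigl[r_\ve(\xi_{t\wedge T_{\mathbf{R}}})-r_\ve(y)-(t\wedge T_{\mathbf{R}})\bar V\bigr]\le 0,
\]
and then upgrading it via the strong Markov property: applied to the shifted process $\{\xi_{s+u}\}_{u\ge 0}$ at an $\mathcal F_s$-measurable starting point $\xi_s$ (with $s\le t$), the very same bound yields $\E[N_t-N_s\mid \mathcal F_s]\le 0$, where $N_t:=r_\ve(\xi_{t\wedge T_{\mathbf{R}}})-r_\ve(\xi_0)-(t\wedge T_{\mathbf{R}})\bar V$.

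To prove the one-point bound I would fix $\delta\in(0,\delta_1)$ and use the alternating stopping times $0=T_0^\delta\le S_1^\delta\le T_1^\delta\le S_2^\delta\le\cdots$ to partition $[0,T_{\mathbf{R}}]$ into two kinds of pieces: \emph{smooth intervals} $[T_{k-1}^\delta,S_k^\delta]$ on which $\xi$ stays in $\M\setminus\Cut_\ve(x_0)$, and \emph{cut-locus intervals} $[S_k^\delta,T_k^\delta]$ of length at most $\delta$. On a smooth interval, Theorem~\ref{p:cutapprox} ensures that $r_\ve$ is smooth along the trajectory, so writing $\Delta_\ch$ locally in H\"ormander form and applying Lemma~\ref{lem:Ito1} provides the decomposition
\[
 r_\ve(\xi_{t\wedge S_k^\delta})-r_\ve(\xi_{t\wedge T_{k-1}^\delta})=M_k(t)+\int_{t\wedge T_{k-1}^\delta}^{t\wedge S_k^\delta}\Delta_\ch r_\ve(\xi_s)\,ds,
\]
with $M_k$ a martingale (bounded since $r_\ve\le R_1$ on this time set), and by \eqref{eq:dominant} used with $x=x_0$ the drift is dominated by $\bar V\,(t\wedge S_k^\delta-t\wedge T_{k-1}^\delta)$. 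On a cut-locus interval, the strong Markov property at $S_k^\delta$ combined with Lemma~\ref{lem:fund} applied at $x=\xi_{S_k^\delta}\in \Cut_\ve(x_0)\cap B^\ve_{R_1}(x_0)$ yields
\[
 \E\bigl[r_\ve(\xi_{t\wedge T_k^\delta})-r_\ve(\xi_{t\wedge S_k^\delta})-(t\wedge T_k^\delta-t\wedge S_k^\delta)\bar V\bigm|\mathcal F_{S_k^\delta}\bigr]\le 0.
\]
Telescoping both families of inequalities and taking expectations delivers the desired bound.

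The main obstacle will be the bookkeeping for the telescoping sum. Two points need care: (i) checking that the sequence $T_k^\delta$ exhausts $T_{\mathbf{R}}$ on a full-measure set, so that $r_\ve(\xi_{T_k^\delta})\to r_\ve(\xi_{T_{\mathbf{R}}})$, and (ii) justifying the interchange of expectation with the (possibly infinite) sum, which is legitimate because the telescoping of the smooth-interval increments is controlled pointwise by the total variation of $r_\ve$ on $[0,T_{\mathbf{R}}]$ (hence bounded by $2R_1$ plus the bounded drift contribution), while the bad-interval increments are handled term by term via the conditional estimate. Integrability of the martingale pieces $M_k$ follows from the same uniform bound $r_\ve\le R_1$. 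Once the one-point estimate is in hand, the supermartingale conclusion is immediate by the strong Markov property, exactly as sketched above.
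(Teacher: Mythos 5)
Your proposal is correct and follows essentially the same route as the paper: reduce to a one-point expectation bound via the strong Markov property, decompose $[0,T_{\mathbf{R}}]$ using the alternating stopping times $T_{k-1}^\delta \le S_k^\delta \le T_k^\delta$, apply Lemma~\ref{lem:Ito1} together with \eqref{eq:dominant} on the smooth intervals and Lemma~\ref{lem:fund} (via the Markov property at $S_k^\delta$) on the cut-locus intervals, telescope, and pass to the limit after checking $T_k^\delta \uparrow T_{\mathbf{R}}$. The only cosmetic difference is that the paper phrases the final passage as a single dominated-convergence step in $k$ (using the boundedness of $r_\ve(\xi_{\cdot\wedge T_{\mathbf{R}}})$ and $t\wedge T_k^\delta\le t$) rather than an interchange of expectation with an infinite sum, but your boundedness argument supplies exactly the same domination.
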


\begin{proof} 
  By virtue of the strong Markov property of $\xi$, it suffices to
  show
$$
\E_x \left[ r_\ve ( \xi_{t \wedge T_{\mathbf{R}} } ) - r_\ve ( \xi_{0}
  ) - ( t \wedge T_{\mathbf{R}} ) \bar{V} \right] \le 0
$$
for each $0 \le s < t$.  By Lemma~\ref{lem:fund}, for all
$n \in \mathbb{N}$
\[
  \E_x \left[ r_\ve ( \xi_{ t \wedge T_k^\delta } ) - r_\ve ( \xi_{ t
      \wedge S_{k}^\delta } ) - \brak{ t \wedge T_k^\delta - t \wedge
      S_{k}^\delta } \bar{V} \, \Big| \, \F_{S_{k}^\delta} \right] \le
  0.
\]
We apply the It\^o formula to $r_\ve ( \xi_t )$ on
$t \in [ T_{k-1}^\delta , S_k^\delta ]$.  By \eqref{eq:dominant}, we
have
\begin{equation*}
  \Delta_{\ch} r_\ve (\xi_{ t \wedge S^\delta_k }) 
  \le 
  \bar{V}
\end{equation*}
for $t > T_{k-1}^\delta$.  These observations yield
\[
  \E_x \left[ r_\ve ( \xi_{ t \wedge S_k^\delta } ) - r_\ve ( \xi_{ t
      \wedge T_{k-1}^\delta } ) - \brak{ t \wedge S_k^\delta - t
      \wedge T_{k-1}^\delta } \bar{V} \, \Big| \, \F_{T_{k-1}^\delta}
  \right] \le 0.
\]
It remains to show $T_k^\delta \to T_{\mathbf{R}}$ as $k \to \infty$
in order to conclude the claim by the dominated convergence theorem.
If $\lim_{k \to \infty } T_k^\delta =: T_\infty < T_{\mathbf{R}}$
occurs, then $T_k^\delta - S_k^\delta > 0$ converges to 0 as
$k \to \infty$.  In addition,
$d ( \xi_{ S_k^\delta } , \xi_{ T_k^\delta } ) = \delta$ must hold for
infinitely many $k \in \mathbb{N}$.  However, the combination of these
contradicts the fact that the sample path $\xi_t$ is uniformly
continuous on $[ 0, T_\infty ]$.  Hence
$\ds \lim_{k \to \infty} T_k^\delta = T_{\mathbf{R}}$ as
$k \to \infty$.
\end{proof}

\begin{corollary} \label{cor:semi}
  $r_\ve ( \xi_{ t \wedge T_{\mathbf{R}} } )$ is a semimartingale.
\end{corollary}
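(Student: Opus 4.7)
The plan is to read the semimartingale property off Proposition \ref{prop:super} via the Doob--Meyer decomposition. Set
\[
  M_t := r_\ve ( \xi_{ t \wedge T_{\mathbf{R}} } ) - r_\ve ( \xi_0 ) - ( t \wedge T_{\mathbf{R}} ) \bar{V}.
\]
By Proposition \ref{prop:super}, $( M_t )_{t \ge 0}$ is a supermartingale. Moreover, because the sample paths of $\xi$ are continuous and $r_\ve$ is continuous on $\M$ (continuity being a consequence of the local semi-concavity in Lemma \ref{cutlocus}(a)), $t \mapsto M_t$ is continuous. Hence $M$ is a continuous supermartingale on $[ 0, \infty )$.

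The next step is to invoke the Doob--Meyer decomposition for continuous supermartingales: there exist a continuous local martingale $N$ with $N_0 = 0$ and a continuous, adapted, non-decreasing process $A$ with $A_0 = 0$ such that $M_t = N_t - A_t$. Rearranging,
\[
  r_\ve ( \xi_{ t \wedge T_{\mathbf{R}} } )
  = r_\ve ( \xi_0 ) + N_t + \bigl( ( t \wedge T_{\mathbf{R}} ) \bar{V} - A_t \bigr).
\]
The term $r_\ve ( \xi_0 )$ is $\F_0$-measurable, $N$ is a continuous local martingale, and the process $( t \wedge T_{\mathbf{R}} ) \bar{V} - A_t$ is a difference of two continuous non-decreasing adapted processes, hence continuous and of finite variation on compacts. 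This exhibits $r_\ve ( \xi_{ t \wedge T_{\mathbf{R}} } )$ as a continuous semimartingale.

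There is essentially no obstacle to this argument, since all of the substantive analytic work --- the handling of the cut locus via the detour point $\tilde{x}_0$, the uniform bound $\bar{V}$ on the sub-Laplacian of $r_\ve$, and the control on the stopping times $T_k^\delta$ --- has already been carried out in Lemma \ref{lem:fund} and Proposition \ref{prop:super}. The only small point to check is continuity of $t \mapsto r_\ve ( \xi_t )$, which follows at once from continuity of $\xi$ together with continuity of $r_\ve$ on $\M$.
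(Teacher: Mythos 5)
Your proof is correct and follows exactly the route the paper intends: the corollary is left without explicit proof because it is immediate from Proposition~\ref{prop:super} via the Doob--Meyer decomposition of the continuous (hence locally bounded, hence class (DL)) supermartingale, which is precisely your argument. The only minor nit is that continuity of $r_\ve$ is simpler than you suggest --- a distance function is $1$-Lipschitz, whereas the local semi-concavity in Lemma~\ref{cutlocus}(a) holds only away from $x_0$.
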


\begin{remark} \label{rem:time-diverge} Repeating the last argument in
  the proof of Proposition~\ref{prop:super} implies that, for each
  fixed $t > 0$, $t \wedge T_{k}^\delta = t \wedge T_{\mathbf{R}}$
  holds for sufficiently large $k$ almost surely.
\end{remark}

\begin{lemma} \label{lem:null}
  $ \lim_{\delta \to 0} \sum_{k=1}^\infty | T_k^\delta - S_k^\delta |
  = 0 $ almost surely.
\end{lemma}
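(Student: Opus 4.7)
The plan is to identify the sum with the Lebesgue measure of the random set $E_\delta := \bigcup_{k \ge 1} [S_k^\delta, T_k^\delta] \subseteq [0, T_{\mathbf{R}}]$. Since by construction $T_{k-1}^\delta \le S_k^\delta \le T_k^\delta$, the intervals are pairwise disjoint except possibly at endpoints, so $\sum_{k=1}^\infty |T_k^\delta - S_k^\delta| = |E_\delta|$, and the problem reduces to showing $|E_\delta| \to 0$ almost surely as $\delta \to 0$.

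A preliminary observation I would extract from Lemma~\ref{cutlocus}\,(b) is that $\Cut_\ve (x_0)$ is closed in $\M$ (its complement being open), and hence $C := \{ s \in [0, T_{\mathbf{R}}] : \xi_s \in \Cut_\ve (x_0) \}$ is closed in $[0, T_{\mathbf{R}}]$ by continuity of sample paths. Two consequences follow: first, whenever $S_k^\delta < T_{\mathbf{R}}$ the infimum defining $S_k^\delta$ is attained, so $\xi_{S_k^\delta} \in \Cut_\ve (x_0)$; second, by Lemma~\ref{lem:neg_cutlocus}, $C$ has Lebesgue measure zero almost surely.

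The key step is the pointwise convergence $\mathbf{1}_{E_\delta}(s) \to 0$ for every $s \in [0, T_{\mathbf{R}}) \setminus C$. For such an $s$, continuity of $\xi$ combined with the closedness of $\Cut_\ve (x_0)$ produces an $\eta > 0$ with $\xi_u \notin \Cut_\ve (x_0)$ for all $u \in (s - \eta, s + \eta) \cap [0, T_{\mathbf{R}})$. If $\delta < \eta$ and $s \in [S_k^\delta, T_k^\delta]$ for some $k$, then $S_k^\delta \in [s - \delta, s] \subseteq (s - \eta, s]$; the case $S_k^\delta = T_{\mathbf{R}}$ is ruled out because it would force $s = T_{\mathbf{R}}$, while the case $S_k^\delta < T_{\mathbf{R}}$ yields $\xi_{S_k^\delta} \in \Cut_\ve (x_0)$, contradicting the choice of $\eta$. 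Hence $s \notin E_\delta$ for all sufficiently small $\delta$.

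With $\mathbf{1}_{E_\delta} \to 0$ almost everywhere on $[0, T_{\mathbf{R}}]$ and the uniform bound $\mathbf{1}_{E_\delta} \le 1$, dominated convergence yields $|E_\delta| = \int_0^{T_{\mathbf{R}}} \mathbf{1}_{E_\delta}(s)\, ds \to 0$, which is the claim. The main technical point to be careful about is the integrability of the dominating constant, i.e.\ the finiteness of $T_{\mathbf{R}}$; if this is not automatic, the argument is first carried out on $[0, t \wedge T_{\mathbf{R}}]$ for each fixed $t > 0$, and Remark~\ref{rem:time-diverge} is invoked to pass from the truncated sums to the full sum.
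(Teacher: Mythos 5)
Your proof is correct and follows essentially the same approach as the paper. The paper works with the $\delta$-thickening $H_\delta$ of the cut-locus time set $H$ and uses the sandwich $\bigcup_k [S_k^\delta, T_k^\delta] \subset H_\delta$ together with monotone convergence; your argument instead shows directly that $\mathbf{1}_{E_\delta}(s) \to 0$ for $s$ outside the cut-locus time set and applies dominated convergence, but both rest on the same two ingredients (closedness of $\Cut_\ve(x_0)$, which forces $\xi_{S_k^\delta}\in\Cut_\ve(x_0)$ when $S_k^\delta < T_{\mathbf{R}}$, and Lemma~\ref{lem:neg_cutlocus} giving measure zero of the occupation set), so the two write-ups are just different packagings of the same idea.
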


\begin{proof}
  For $\delta > 0$, let us define random subsets $H_\delta$ and $H$ of
  $[0, T_{\mathbf{R}} )$ by
  \begin{align*}
    H_\delta 
    & : = 
      \left\{ 
      t \in [ 0 , T_{\mathbf{R}} ) 
      \, | \, 
      \mbox{there exists $t' \in [ 0, T_{\mathbf{R}} )$ satisfying }
      |t - t'| \le \delta 
      \mbox{ and } 
      \xi_{t'} \in \Cut_\ve (x_0)
      \right\} ,  
    \\
    H 
    & : = 
      \left\{ 
      t \in [ 0 , T_{\mathbf{R}} ) 
      \, | \, 
      \xi_t \in \Cut_\ve (x_0)
      \right\} . 
  \end{align*}  
  Since the sample path of $\xi$ is continuous and $\Cut_\ve (x_0)$ is
  closed, $H$ is closed and $H = \cap_{ \delta > 0 } H_\delta$ holds.
  By the definition of $S_k^\delta$ and $T_k^\delta$, we have
  \[
    H \subset \bigcup_{m=1}^\infty [ S_k^\delta , T_k^\delta ] \subset
    H_\delta .
  \]
  Hence the monotone convergence theorem yields that, for any $T > 0$,
  \[
    \limsup_{\delta \to 0} \sum_{m=1}^\infty | T_k^\delta \wedge T -
    S_k^\delta \wedge T | \le \lim_{\delta \to 0} \int_0^{T}
    1_{H_\delta} (t) dt = \int_0^{T} 1_H (t) dt = 0
  \]
  almost surely, where the last equality follows from
  Lemma~\ref{lem:neg_cutlocus}.
\end{proof}

\begin{lemma} \label{lem:martingale} Let $U$ be a local chart of $\M$
  on which $\Delta_{\ch} = \sum_{i=1}^n X_i^2 + X_0$ holds with a
  family of vector fields $\{ X_i \}_{i=0}^n$ on $U$ and $\xi_t$
  satisfies the stochastic differential equation \eqref{eq:SDE} with a
  Brownian motion $( W^1_t , \ldots , W^n_t )$.  Let $\tau_1$ and
  $\tau_2$ be stopping times with $\tau_1 < \tau_2$ so that $\xi_t$ is
  in $U$ whenever $t \in [ \tau_1 , \tau_2 ]$.  Then the martingale
  part of
  $ r_\ve ( \xi_{ t \wedge T_{\mathbf{R}} \wedge \tau_2 } ) - r_\ve (
  \xi_{ T_{\mathbf{R}} \wedge \tau_1 } ) $ coincides with
  \begin{equation} \label{eq:mart} \sqrt{2} \sum_{i=1}^n
    \int_{T_{\mathbf{R}} \wedge \tau_1}^{t \wedge T_{\mathbf{R}}
      \wedge \tau_2} X_i r_\ve ( \xi_s ) d W_s^i .
  \end{equation}
  Moreover, the quadratic variation of the martingale part of
  $r_\ve ( \xi_{ t \wedge T_{\mathbf{R}} } ) - r_\ve ( \xi_0 )$ is
  bounded from above by $2t$.
\end{lemma}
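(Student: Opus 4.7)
The strategy is to identify the local martingale part $\mathcal{M}_t$ in the semimartingale decomposition of $r_\ve(\xi_{\cdot\wedge T_{\mathbf{R}}\wedge\tau_2})-r_\ve(\xi_{T_{\mathbf{R}}\wedge\tau_1})$ (which exists by Corollary~\ref{cor:semi}) with the stochastic integral $M_t$ of \eqref{eq:mart}, by matching the cross-variations $\langle\,\cdot\,, W^i\rangle$ for each $i=1,\ldots,n$. As a preliminary, since $r_\ve$ is 1-Lipschitz with respect to $d_\ve$ one has $|\nabla^{g_\ve} r_\ve|_{g_\ve}\le 1$ almost everywhere; the decomposition $g_\ve=g_\ch\oplus\ve^{-1}g_\V$ then unpacks to $|\nabla_\ch r_\ve|_\ch^2+\ve|\nabla_\V r_\ve|_\V^2\le 1$, so in particular $\sum_{i=1}^n(X_i r_\ve)^2=|\nabla_\ch r_\ve|_\ch^2\le 1$ a.e.\ on $U$ when $\{X_i\}$ is an orthonormal horizontal frame (which is implicit in writing $\Delta_\ch=\sum X_i^2+X_0$). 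Consequently $M_t$ is a continuous $L^2$-martingale with bracket $\langle M\rangle_t\le 2(t\wedge T_{\mathbf{R}}\wedge\tau_2-T_{\mathbf{R}}\wedge\tau_1)\le 2t$, which will yield the ``moreover'' claim once $\mathcal{M}_t=M_t$ is established.

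To compute $\langle r_\ve(\xi), W^i\rangle_t$ I split the time axis via the stopping times $S_k^\delta, T_k^\delta$ introduced before. On each good interval $[T_{k-1}^\delta, S_k^\delta]$ the path avoids $\Cut_\ve(x_0)$, so $r_\ve$ is smooth along $\xi$ and Lemma~\ref{lem:Ito1} produces the cross-variation contribution $\sqrt 2\int X_i r_\ve(\xi_s)\,ds$ on that interval. On each bad interval $[S_k^\delta, T_k^\delta]$, the Kunita--Watanabe and Cauchy--Schwarz inequalities combine to give
\[
\sum_k\bigl|\langle r_\ve(\xi), W^i\rangle_{[S_k^\delta, T_k^\delta]}\bigr|\le\sqrt{\langle r_\ve(\xi)\rangle_{t\wedge T_{\mathbf{R}}}}\cdot\sqrt{\sum_k(T_k^\delta-S_k^\delta)},
\]
which vanishes as $\delta\downarrow 0$ by Lemma~\ref{lem:null}, the first factor being a.s.\ finite by the semimartingale property (Corollary~\ref{cor:semi}). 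Passing to the limit and invoking Lemma~\ref{lem:neg_cutlocus} to remove the indicator of $\Cut_\ve(x_0)$ from the good-interval contribution, we obtain $\langle r_\ve(\xi), W^i\rangle_t=\sqrt 2\int_0^{t\wedge T_{\mathbf{R}}\wedge\tau_2}X_i r_\ve(\xi_s)\,ds=\langle M, W^i\rangle_t$ for every $i$.

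Consequently $\langle \mathcal{M}-M, W^i\rangle\equiv 0$ for each $i$. Because on $U$ (up to $\tau_2$) the diffusion $\xi$ is the strong solution of \eqref{eq:SDE} driven by $W=(W^1,\ldots,W^n)$, the augmented filtration is Brownian, and martingale representation applied to the continuous local martingale $\mathcal{M}-M$ forces $\mathcal{M}_t\equiv M_t$ on $[T_{\mathbf{R}}\wedge\tau_1, T_{\mathbf{R}}\wedge\tau_2]$. The quadratic-variation bound $\langle\mathcal{M}\rangle_t\le 2t$ then follows from the preliminary step and patches globally across charts, giving the ``moreover'' conclusion. The principal obstacle is the Kunita--Watanabe control of $\sum_k\langle r_\ve(\xi), W^i\rangle_{[S_k^\delta, T_k^\delta]}$: it requires combining the smallness $\sum_k(T_k^\delta-S_k^\delta)\to 0$ from Lemma~\ref{lem:null} with the a.s.\ finiteness of the total bracket $\langle r_\ve(\xi)\rangle_t$, which is exactly what Corollary~\ref{cor:semi} secures.
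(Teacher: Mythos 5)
Your proof is correct and follows essentially the same route as the paper: both use the stopping-time partition $S_k^\delta, T_k^\delta$ to isolate the cut-locus visits, apply Lemma~\ref{lem:Ito1} on the good intervals, and kill the bad-interval contribution as $\delta\downarrow 0$ via Lemma~\ref{lem:null}, invoking martingale representation in the Brownian filtration of the chart to nail down the martingale part. The only organizational difference is in the bad-interval estimate: the paper first writes the martingale part as $\sum_i\int\eta^i\,dW^i$ and bounds $\langle N\rangle$ directly via $|\eta^i-\sqrt2\,X_i r_\ve|^2\le 2(|\eta|^2+2)$, whereas you bound $\langle r_\ve(\xi),W^i\rangle$ over the bad intervals by Kunita--Watanabe and Cauchy--Schwarz and only bring in the representation at the end to convert $\langle\mathcal M-M,W^i\rangle\equiv0$ into $\mathcal M=M$; both variants ultimately rest on the a.s.\ finiteness of $\langle\mathcal M\rangle$ (Corollary~\ref{cor:semi}) together with Lemma~\ref{lem:null}, so they are interchangeable.
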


\begin{proof}
  We first remark that the integrand $X_i r_\ve ( \xi_s )$ of the
  It\^o stochastic integral \eqref{eq:mart} is well-defined by virtue
  of Lemma~\ref{lem:neg_cutlocus}.  Moreover,
  $\sum_{i=1}^n | X_i r_\ve ( \xi_s ) |^2 \le 1$ holds for a.e.~$s$
  $\P_{x_1}$-almost surely.  By the martingale representation theorem,
  there exists an $\R^n$-valued adapted process $\eta$ such that the
  martingale part of
  $ r_\ve ( \xi_{ t \wedge T_{\mathbf{R}} \wedge \tau_2 } ) - r_\ve (
  \xi_{ T_{\mathbf{R}} \wedge \tau_1 } ) $ equals
$$\sum_{i=1}^n \int_{\tau_1 \wedge T_{\mathbf{R}}}^{t \wedge T_{\mathbf{R}} \wedge \tau_2 } \eta^i_s dW_s^i.$$
Let us define a (local) martingale $N_t$ by
\[
  N_t := \sum_{i=1}^n \int_{T_{\mathbf{R}} \wedge \tau_1}^{ t \wedge
    T_{\mathbf{R}} \wedge \tau_2 } \eta^i_s dW_s^i - \sqrt{2}
  \sum_{i=1}^n \int_{T_{\mathbf{R}} \wedge \tau_1}^{t \wedge
    T_{\mathbf{R}} \wedge \tau_2} X_i r_\ve ( \xi_s ) d W_s^i .
\]
Using the stopping times $S_k^\delta$ and $T_k^\delta$, the quadratic
variation $\langle N \rangle_t$ of $N$ is expressed as follows:
\begin{align} \label{eq:quadratic} \langle N \rangle_{\tau_2} =
  \sum_{i=1}^d \sum_{k=1}^\infty \Bigg( &\int_{T_{k-1}^\delta \wedge
    T_{\mathbf{R}} \wedge \tau_1}^{ S_k^\delta \wedge T_{\mathbf{R}}
    \wedge \tau_2 } | \eta^i (t) - \sqrt{2} X_i r_\ve ( \xi_t) |^2
  dt\notag
  \\
                                        &+ \int_{S_k^\delta \wedge
                                          T_{\mathbf{R}} \wedge \tau_1
                                          }^{T_k^\delta \wedge
                                          T_{\mathbf{R}} \wedge
                                          \tau_2} | \eta^i_t -
                                          \sqrt{2} X_i r_\ve ( \xi_t )
                                          |^2 dt \Bigg).
\end{align}  

Since $\xi_t \notin \Cut_\ve (x_0)$ if
$t \in (T_{k-1}^\delta , S_k^\delta )$, Lemma \ref{lem:Ito1} yields
\[
  \int_{T_{k-1}^\delta \wedge T_{\mathbf{R}} \wedge \tau_1}^{
    S_k^\delta \wedge T_{\mathbf{R}} \wedge \tau_2} | \eta^i_t -
  \sqrt{2} X_i r_\ve ( \xi_t ) |^2 dt = 0
\]
for $k \in \mathbb{N}$ and $i =1 , \ldots , n$.  For the second term
in the right-hand side of \eqref{eq:quadratic} we have
\begin{equation} \label{eq:mart_neg} \sum_{k=1}^\infty
  \int_{S_k^\delta \wedge T_{\mathbf{R}} \wedge \tau_1 }^{T_k^\delta
    \wedge T_{\mathbf{R}} \wedge \tau_2} | \eta^i_t - \sqrt{2} X_i
  r_\ve ( \xi_t ) |^2 dt \le 2 \int_{ \bigcup_{k=1}^\infty [
    S_k^\delta \wedge T_{\mathbf{R}} \wedge \tau_1 , T_k^\delta \wedge
    T_{\mathbf{R}} \wedge \tau_2 ] } \left( | \eta_t |^2 + 2 \right)
  dt .
\end{equation}
Since $\eta$ is locally square-integrable on $[0 ,\infty)$ almost
surely, Lemma~\ref{lem:null} yields that the right hand side of
\eqref{eq:mart_neg} tends to 0 as $\delta \downarrow 0$.  Hence
$\langle N \rangle_{\tau_2} = 0$ and the first assertion follows.  The
second assertion can be obtained by decomposing $\xi_t$ through a
sequence of stopping times into sample paths each of which is
contained in a local chart and using the strong Markov property.
\end{proof}

We are final ready to prove our It\^o formula for the radial process.
 
\begin{proof}[Proof of Theorem~\ref{th:Ito-radial}]
  On the basis of Corollary \ref{cor:semi}, we denote the martingale
  part of $r_\ve ( \xi_{ t \wedge T_{\mathbf{R}}} )$ by
  $\beta^{\mathbf{R}}_{ t }$.
  By virtue of \eqref{eq:dominant} and Lemma \ref{lem:neg_cutlocus},
  the integral of $s \mapsto \Delta_{\ch} r_\ve ( \xi_s )$ on a subset
  of $[0,t \wedge T_{\mathbf{R}} ]$ is well-defined.  Set
  $ I_\delta : = \bigcup_{n=1}^\infty [ S_k^\delta , T_k^\delta ] $
  and let us define $l^{\delta , \mathbf{R}}_t$ by
  \begin{align*}
    l^{\delta, \mathbf{R}}_t
    & := 
      - r_\ve ( \xi_{ t \wedge T_{\mathbf{R}} } ) + r_\ve ( \xi_0 ) 
      + 
      \beta^{\mathbf{R}}_{ t \wedge T_{\mathbf{R}} } 
    \\
    & \qquad 
      +
      \int_{ [ 0, t \wedge T_{\mathbf{R}} ] \setminus I_\delta }
      \Delta_{\ch} r_\ve ( \xi_s ) 
      ds
      +  
      | [0, t \wedge T_{\mathbf{R}} ] \cap I_\delta |\bar{V}, 
  \end{align*}
  where the modulus indicates the Lebesgue measure of the set.  By
  Lemmas~\ref{lem:Ito1} and \ref{lem:martingale},
  $l^{\delta ,\mathbf{R}}$ is constant on
  $[0, T_{\mathbf{R}} ] \setminus I_\delta$.  Moreover,
  Proposition~\ref{prop:super} yields that
  $l^{ \delta , \mathbf{R}}_t$ is non-decreasing in $t$, and in
  particular $l^{\delta, \mathbf{R}}_t \ge 0$.  By Lemma
  \ref{lem:neg_cutlocus},
  $\Delta_{\ch} r_\ve ( \xi_s ) 1_{I_\delta^c} ( \xi_s )$ converges to
  $\Delta_{\ch} r_\ve ( \xi_s )$ as $\delta \downarrow 0$ for
  a.e.~$s \in [ 0 , t \wedge T_{\mathbf{R}} ]$ $\P_{x_1}$-a.s.  Thus,
  the Fatou lemma together with \eqref{eq:dominant} and Lemma
  \ref{lem:null} yields
  \begin{align}
    \nonumber
    0 \le \limsup_{\delta \downarrow 0} l^{\delta, \mathbf{R}}_t 
    & \le
      - r_\ve ( \xi_{ t \wedge T_{\mathbf{R}} } )  
      + r_\ve ( \xi_0 ) 
      + \beta^{\mathbf{R}}_{ t \wedge T_{\mathbf{R}} } 
      + \int_0^{ t \wedge T_{\mathbf{R}} } 
      \Delta_{\ch} r_\ve ( \xi_s ) 
      ds .
  \end{align}
  This inequality and \eqref{eq:dominant} ensure that
  $s \mapsto \Delta_{\ch} r_\ve ( \xi_s )$ belongs to
  $L^1 ( [ 0, t \wedge T_{\mathbf{R}} ] )$ $\P_{x_1}$-a.s.  Thus,
  Lemma \ref{lem:null} implies that
  $l^{\mathbf{R}}_t := \lim_{\delta \downarrow 0} l^{\delta,
    \mathbf{R}}_t$ exists and
  \[
    r_\ve ( \xi_{ t \wedge T_{\mathbf{R}} } ) = r_\ve ( \xi_0 ) +
    \beta^{\mathbf{R}}_{ t \wedge T_{\mathbf{R}} } + \int_0^{ t \wedge
      T_{\mathbf{R}} } \Delta_{\ch} r_\ve ( \xi_s ) ds -
    l^{\mathbf{R}}_t
  \]
  holds for all $t \ge 0$ $\P_{x_1}$-a.s.  By Lemma
  \ref{lem:neg_origin}, we can take the limit $R_2 \to \infty$ in the
  last identity in a compatible way. Then the conclusion follows by
  taking the limit $R_1 \to \infty$.  Indeed, it is not difficult to
  see that $l_t$ can increase only when $\xi_t \in \Cut_\ve (x_0)$
  from the corresponding property for $l^{\delta, \mathbf{R}}_t$.
\end{proof}

\section{ Comparison of radial processes and stochastic completeness
  on general foliations} \label{sec:ComparisonGen}

\subsection{Comparison of radial processes}

We first recall the definition of a model Riemannian manifold that was
introduced by E.~Greene and H.~Wu, see \cite{GW}.

\begin{definition}
  Let $h\colon[0,+\infty)\to (0,+\infty)$ be a smooth function which is
  positive on $(0,+\infty)$ and such that $h(0)=0$, $h'(0)=1$. Denote
  $K(r)=-{h''(r)}/{h(r)}$.  The Riemannian manifold
  $\M_K=[0,+\infty) \times \mathbb{S}^n$ with Riemannian metric
  \[
    g =dr^2+h(r)^2 g_{\mathbb{S}^n}
  \]
  is called the Riemannian model space with radial curvature $K(r)$, where
  $g_{\mathbb{S}^n}$ denotes the standard metric on $\mathbb{S}^n$.
\end{definition}

As before, we fix a point $x_0 \in \M$. For $x_1 \in \M$, we consider
the solution of the stochastic differential equation
\[
  \tilde{\xi}_t=d_\ve (x_0,x_1)+n\int_0^t
  \frac{h'(\tilde{\xi}_s)}{h(\tilde{\xi}_s)} ds+\beta_t
\]
where $\beta$ is the martingale defined in Theorem
\ref{th:Ito-radial}.

\begin{theorem}\label{comparison model}
  Assume that
  \[
    \min \left\{ \rho_1 (r) - \frac{ \kappa (r) }{\ep} , \frac{ \rho_2
        (r) }{\ep} \right\} \ge n K(r).
  \]
  Let $( ( \xi_t )_{t \ge 0} , ( \P_x )_{x \in \M} )$ be the
  horizontal Brownian motion on $\M$ generated by $\Delta_{\ch}$.

  Then, for any $R \ge d_\ve(x_0,x_1)$ and any non decreasing function
  $\phi$ on $[0,R)$,
  \[
    \E_{x_1}\big[ \phi( d_\ve (x_0,\xi_t) ) ,\, t < \tau_R \big] \le
    \E_{x_1} \big[ \phi(\tilde{\xi}_t) ,\, t < \tau_R \big],
  \]
  where $\tau_R$ is the hitting time of $R$ by $\tilde \xi$.
\end{theorem}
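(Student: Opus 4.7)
The plan is to establish the pathwise inequality $r_\ve(\xi_{t\wedge\zeta}) \le \tilde{\xi}_t$ almost surely on the event $\{t < \tau_R\}$; given this, the expectation inequality is immediate from the monotonicity of $\phi$. The key structural fact is that $r_\ve(\xi_t)$ and $\tilde{\xi}_t$ share the same driving martingale $\beta$, so the difference is of finite variation and the comparison reduces to a one-dimensional Gr\"onwall-type argument once one has a good pointwise bound on $\Delta_\ch r_\ve$.

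The first step is to specialise Theorem \ref{comparison 1} to the model profile $G(s) = h(s)$. Integration by parts, together with $h''=-Kh$ and $h(0)=0$, yields
\[
\int_0^r n h'(s)^2\,ds \;=\; n\,h'(r)h(r) + n\int_0^r K(s)\,h(s)^2\,ds .
\]
The Eikonal equation for the Riemannian distance $r_\ve$ gives $\Gamma(r_\ve) + \ve\,\Gamma^\V(r_\ve) = 1$ wherever $r_\ve$ is smooth, so the curvature hypothesis $\min\{\rho_1 - \kappa/\ve,\,\rho_2/\ve\}\ge nK$ implies
\[
\bigl(\rho_1 - \tfrac{\kappa}{\ve}\bigr)\Gamma(r_\ve) + \rho_2\,\Gamma^\V(r_\ve) \;\ge\; nK\bigl[\Gamma(r_\ve)+\ve\,\Gamma^\V(r_\ve)\bigr] \;=\; nK .
\]
Substituting into Theorem \ref{comparison 1} and cancelling the two $K$-contributions leaves the clean pointwise bound $\Delta_\ch r_\ve(x) \le g(r_\ve(x))$ on $\M\setminus(\Cut_\ve(x_0)\cup\{x_0\})$, where $g(r) := n\,h'(r)/h(r)$.

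Now set $X_t := r_\ve(\xi_{t\wedge\zeta})$ and $Y_t := \tilde{\xi}_t$. Combining Theorem \ref{th:Ito-radial}, the bound from the previous step, Lemma \ref{lem:neg_cutlocus} (so that the cut-locus occupation time is negligible) and the monotonicity of $l$, the difference $D_t := X_t - Y_t$ is continuous, of finite variation, with $D_0=0$ and
\[
D_t \;\le\; \int_0^{t\wedge\zeta}\bigl[g(X_s) - g(Y_s)\bigr]\,ds .
\]
Because $D$ has finite variation, its local time at $0$ vanishes and $D_t^+ = \int_0^t \mathbf{1}_{\{D_s>0\}}\,dD_s$. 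For $0<\ep<d_\ve(x_0,x_1)$ and $K>R$ introduce the stopping time $\sigma := \tau_R \wedge \inf\{t:Y_t\le \ep\}\wedge\inf\{t:X_t\ge K\}$. Since $g$ is smooth on $(0,R]$, it is Lipschitz on $[\ep,K]$ with constant $L=L(\ep,K)$. Whenever $D_s>0$ on $[0,\sigma]$ we have $X_s>Y_s\ge \ep$ and $X_s\le K$, so $g(X_s)-g(Y_s)\le L\,D_s^+$, and Gr\"onwall's inequality yields $D_t^+\equiv 0$, i.e.\ $X_t\le Y_t$, on $[0,\sigma]$.

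Since $X_t \le Y_t \le R < K$ on $[0,\sigma]$, the stopping time $\inf\{t:X_t\ge K\}$ does not bind and $\sigma = \tau_R\wedge \inf\{t:Y_t\le \ep\}$. Letting $\ep\downarrow 0$ extends the comparison to all $t<\tau_R$, provided $\tilde\xi$ does not touch $0$ on $[0,\tau_R)$. This non-attainability is the main technical obstacle: the drift $g(r)\sim n/r$ near the origin is strongly repulsive, but $\beta$ is only a martingale with $\langle\beta\rangle_t\le 2t$ rather than a Brownian motion, so the classical Bessel-process argument must be adapted (e.g.\ by time-changing $\beta$ into a Brownian motion with slower clock and comparing $\tilde\xi$ with a Bessel process of suitable dimension). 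Granting this, we obtain $r_\ve(\xi_t)\le \tilde\xi_t$ on $\{t<\tau_R\}$, and applying $\phi$ and taking $\E_{x_1}$ concludes the proof.
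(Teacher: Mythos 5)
Your proposal follows the same route as the paper: the It\^o decomposition of Theorem \ref{th:Ito-radial} reduces the statement to a one-dimensional pathwise comparison, which the paper settles by a one-line citation of the Ikeda--Watanabe theorem \cite{Ik-Wa}, whereas you re-derive that comparison explicitly via a Gr\"onwall argument on $D_t^+$. Your computation of the drift bound $\Delta_\ch r_\ve \le n\,h'(r_\ve)/h(r_\ve)$ from Theorem \ref{comparison 1}, using the Eikonal identity $\Gamma(r_\ve)+\ve\,\Gamma^\V(r_\ve)=1$ and integration by parts together with $h''=-Kh$, $h(0)=0$, is correct and fills in a step the paper leaves implicit; the Gr\"onwall step (local Lipschitz bound on $g$ away from the origin, absence of local time for a finite-variation process, monotonicity of $l$) is also fine.

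The one loose end you flag -- whether $\tilde\xi$ can reach $0$ before $\tau_R$ -- has a much simpler resolution than the time-change/Bessel argument you sketch, and the needed ingredient is already in the paper. Set $T_0:=\inf\{t\ge 0:\tilde\xi_t=0\}$. Your localized comparison gives $r_\ve(\xi_t)\le\tilde\xi_t$ on $[0,\tau_R\wedge\sigma_\ep]$ for every $\ep>0$, hence, letting $\ep\downarrow 0$, on $[0,\tau_R\wedge T_0)$. If $T_0<\tau_R$ with positive probability, then on that event $0\le r_\ve(\xi_t)\le\tilde\xi_t<R$ forces $\xi$ to remain in a compact set (so no explosion occurs before $T_0$), and by continuity $r_\ve(\xi_t)\to 0$ as $t\uparrow T_0$, i.e.\ $\xi_{T_0}=x_0$. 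But $\{x_0\}$ is polar for $\xi$ by Lemma \ref{lem:neg_origin}, a contradiction. Hence $T_0\ge\tau_R$ almost surely, the $\ep$-localization is immaterial, and the pathwise inequality holds on $\{t<\tau_R\}$. So no separate non-attainability estimate for $\tilde\xi$ is required, and with this observation your proof is complete and self-contained where the paper's is only a citation.
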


\begin{proof}
  It follows from Theorem \ref{th:Ito-radial} and the Ikeda-Watanabe
  comparison theorem \cite{Ik-Wa} that for $t < \tau_R$, one has
  $\P_{x_1}$ a.s.
  \[
    d_\ve (x_0,\xi_t) \le \tilde{\xi}_t.
  \]
  The result follows then immediately.
\end{proof}

\subsection{Stochastic completeness criterion}

In this section we prove a general non explosion
criterion for the horizontal Brownian $(\xi_t)_{t \ge 0}$ as a consequence of the sub-Laplacian
comparison Theorem~\ref{comparison 1}. Recall that
the functions $\rho_1,\rho_2,\kappa$ are defined through the
assumptions \eqref{assumptions curvature}.

\begin{theorem} \label{thm:S-cpl} Suppose that there exists $c_1 > 0$
  such that
  \[
    \min \left\{ \rho_1 (s) - \frac{ \kappa (s) }{\ep} , \frac{ \rho_2
        (s) }{\ep} \right\} \ge - n ( c_1^2 s^2 + c_1 )
  \]
  holds for all $s > 0$.  Then $(\xi_t)_{t \ge 0}$ does not explode.
\end{theorem}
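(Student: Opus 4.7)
The plan is to reduce non-explosion of $\xi$ to non-explosion of the one-dimensional comparison diffusion provided by Theorem \ref{comparison model}, and then to establish the latter via a Khasminskii-type Lyapunov estimate.

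First, set $K(r) := -(c_1^2 r^2 + c_1)$ and let $h\colon[0,\infty) \to [0,\infty)$ be the warping function of the model space $\M_K$, i.e.\ the solution of $h''(r) = (c_1^2 r^2 + c_1)\, h(r)$ with $h(0) = 0$ and $h'(0) = 1$. The hypothesis of Theorem \ref{thm:S-cpl} is exactly $\min\{\rho_1(r) - \kappa(r)/\ve,\rho_2(r)/\ve\} \ge nK(r)$, so Theorem \ref{comparison model} applies; moreover its proof shows the pathwise bound $r_\ve(\xi_t) \le \tilde{\xi}_t$ almost surely on $\{t < \tau_R\}$ for every $R > r_\ve(x_1)$, where $\tau_R$ is the hitting time of $R$ by $\tilde{\xi}$. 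Setting $\sigma_R := \inf\{t \ge 0 : r_\ve(\xi_t) \ge R\}$, this yields $\sigma_R \ge \tau_R$, so non-explosion of $\xi$ reduces to $\tau_R \to \infty$ a.s.\ as $R\to\infty$, i.e.\ to non-explosion of $\tilde{\xi}$.

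To verify non-explosion of $\tilde{\xi}$, I would first control the drift $h'/h$. Writing $h = e^g$, the Riccati equation $g''(r) + g'(r)^2 = c_1^2 r^2 + c_1$ together with the initial behaviour $h(r) \sim r$ near zero gives, by a standard sub-/super-solution comparison, the estimate $r\, h'(r)/h(r) \le C(1 + r^2)$ on $[0,\infty)$ for some constant $C > 0$. I would then apply It\^o's formula to the Lyapunov function $\phi(r) := 1 + r^2$ along $\tilde{\xi}$; using $\ang{\be}_t \le 2t$ one gets
\[
\phi(\tilde{\xi}_{t \wedge \tau_R}) - \phi(\tilde{\xi}_0) \le \int_0^{t \wedge \tau_R} \brak{ 2n\, \tilde{\xi}_s\, \frac{h'(\tilde{\xi}_s)}{h(\tilde{\xi}_s)} + 2 }\, ds + M_t,
\]
where $M_t$ is a local martingale. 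The integrand is bounded by $C'\phi(\tilde{\xi}_s)$; after a further localization, taking expectations and applying Gronwall yield $\E[\phi(\tilde{\xi}_{t \wedge \tau_R})] \le \phi(\tilde{\xi}_0)\, e^{C't}$, and Markov's inequality then gives $\P(\tau_R \le t) \le \phi(\tilde{\xi}_0)\, e^{C't}/\phi(R) \to 0$ as $R \to \infty$. Hence $\tilde{\xi}$ does not explode, and therefore neither does $\xi$.

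The main technical obstacle is the Riccati analysis giving the quadratic bound on $r\, h'/h$; everything else amounts to a routine It\^o-formula and Gronwall computation standard for Lyapunov-type non-explosion arguments.
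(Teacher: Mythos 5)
Your proposal is correct in outline and reaches the same conclusion, but it follows a genuinely different technical route from the paper's own proof, and there is one point of care you should acknowledge.

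The paper does not go through Theorem~\ref{comparison model} (the model-space formulation) at all. Instead it applies the raw integral comparison of Theorem~\ref{comparison 1} directly, choosing $G$ to be a strictly increasing $C^2$ function with $G(0)=0$ that coincides with $\exp(c_1 s^2/2)$ for $s\ge c_1$, so that $G''=(c_1^2 s^2+c_1)G$. Because the paper first reduces, via the strong Markov property, to the process stopped at $\tau=\inf\{t: r_\ep(\xi_t)\le c_1\}$, it only ever needs the comparison in the region $r_\ep>c_1$. There $G'/G=c_1 s$ exactly, no singularity at the origin enters, and Theorem~\ref{comparison 1} yields the clean affine bound $\Delta_\Ho r_\ve(y)\le C_0+nc_1 r_\ep(y)$. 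The paper then writes down the affine SDE $d\bar r_t = d\beta_t+(nc_1\bar r_t+C_0)\,dt$, solves it in closed form by variation of constants, and observes that $\int_0^t e^{-nc_1 s}\,d\beta_s$ is finite for all $t$ because $\langle\beta\rangle_t\le 2t$; no Lyapunov function or Gronwall step is needed. Your route instead takes the model warping function $h$ with $h(0)=0$, $h'(0)=1$, $h''=(c_1^2 r^2+c_1)h$, must run a Riccati supersolution argument (comparing with $1/r+c_1 r$) to tame the $h'/h\sim 1/r$ singularity and extract $r\,h'/h\le C(1+r^2)$, and then runs a Khasminskii--Gronwall estimate with $\phi(r)=1+r^2$. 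Both methods exploit exactly the same structural fact---that the curvature hypothesis corresponds to an at-most-linear drift bound in the radial direction---but the paper's choice of $G$ together with the lower truncation at $r_\ep=c_1$ avoids the singular drift altogether, making the 1D process explicitly solvable; yours is closer to the standard Lyapunov recipe and arguably more portable.

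The point you should not leave implicit: unlike the paper's $\bar r_t$, your $\tilde\xi_t$ has drift $n\,h'/h$ that blows up like $n/r$ at $r=0$, so the well-posedness of the SDE and the validity of the Ikeda--Watanabe pathwise comparison near $r=0$ require an argument (e.g.\ the Bessel-type repulsion keeping $\tilde\xi$ strictly positive from a positive starting point for $n\ge 2$, or a lower truncation as in the paper, or a direct appeal to the explicit formula for the model-space radial process). Without a sentence to this effect the step ``the pathwise bound $r_\ve(\xi_t)\le\tilde\xi_t$ holds on $\{t<\tau_R\}$'' is not fully justified. Once that is fixed, the argument is sound.
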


\begin{proof}
  Let $\tau = \inf \{ t > 0 \mid r_\ep ( \xi_t ) \le c_1 \}$.  By the
  strong Markov property of $( \xi_t , \P_x )$, it suffices to show
  that $( \xi_{t \wedge \tau} )_{t \ge 0}$ does not explode for each
  $x \in \M$ with $r_\ep (x) > c_1$.  Let us define
  $G : [ 0 , \infty ) \to [ 0 , \infty )$ be a strictly increasing
  $C^2$-function satisfying $G (0) = 0$ and
  $G (s) = \exp ( c_1 s^2 / 2 )$ for $s \ge c_1$.  Note that
  $G'' (s) = ( c_1^2 s^2 + c_1 ) G(s)$ holds.  By Theorem
  \ref{comparison 1}, there exists $C_0 > 0$ such that, for $y \in \M$
  with $r_\ep (y) > c_1$ and $y \notin \Cut_\ep (x_0)$,
  \begin{align*}
    \Delta_\Ho r_\varepsilon(y)
    & \le
      \frac{1}{G(r_\varepsilon(y))^2}
      \int_0^{r_\varepsilon(y)}
      \bigg(
      nG'(s)^2
    \\
    & \hspace{9em}
      -
      \left[
      \left(
      \rho_1 (s)
      -
      \frac{1}{\varepsilon} \kappa(s)
      \right)
      \Gamma(r_\varepsilon)(y)
      +
      \rho_2(s) \Gamma^\V (r_\varepsilon)(y)
      \right]
      G (s)^2
      \bigg)
      ds
    \\
    & \le 
      C_0
      +
      \frac{n}{G(r_\varepsilon(y))^2}
      \int_{c_1}^{r_\varepsilon(y)}
      \left(
      G'(s)^2
      + 
      ( c_1^2 s^2 + c_1 ) G (s)^2
      \right)
      ds
    \\
    & = 
      C_0
      +
      \frac{n}{G(r_\varepsilon(y))^2}
      \left(
      G'( r_\ep (y) ) G ( r_\ep (y) )
      -  
      G' ( c_1 ) G ( c_1 )
      \right)
    \\
    & \le 
      C_0 + n c_1 r_\ep (y). 
  \end{align*}
  Let us define a stochastic process $( \bar{r}_t )_{t \ge 0}$ by
  \[
    \bar{r}_t : = e^{n c_1 t } \left( r_\ep (x) + \int_0^t e^{- n c_1
        s } d \beta_s + \frac{ C_0 ( 1 - e^{ - n c_1 t } ) }{ n c_1 }
    \right) .
  \]
  Note that $\bar{r}$ solves the following stochastic differential
  equation:
  \begin{align*}
    d \bar{r}_t 
    & = 
      d \beta_t + ( n c_1 \bar{r}_t + C_0 ) d t ,
    \\
    \bar{r}_0 & = r_\ep (x). 
  \end{align*}
  By arguing as in Theorem \ref{comparison model}, we can show
  $r_\ep ( \xi_{ t \wedge \tau } ) \le \bar{r}_{ t \wedge \tau}$ for
  $t \in [ 0 , \bar{\zeta} )$ a.s., where
  $ \bar{\zeta} : = \inf \{ t \ge 0 \mid \bar{r}_t \in ( c_1 , \infty
  ) \} $. By the martingale representation as a time-change of a
  Brownian motion, the inequality $\langle \beta \rangle_t \le t$
  implies that $\int_0^t e^{- n c_1 s} d \beta_s$ does not explode in
  a finite time. Hence, the same is true for $\bar{r}_t$.  That is,
  $\bar{r}_{\bar{\zeta}} = c_1$ holds if $\bar{\zeta} < \infty$.
  These facts yield that $r_\ep ( \xi_{t \wedge \tau} )$ does not
  explode.  Thus the conclusion follows.
\end{proof}

\section{Comparison theorems for the radial processes on Sasakian
  manifolds} \label{sec:Sasakian}

In this section we study more in details specific foliations on which
the theory can be pushed further. The foliations we consider are
called Sasakian foliations. Those are well-studied co-dimension one
totally geodesic foliations with additional structure described in
\cite[Section~3]{BGKT} (we also for instance refer to \cite{BD} and the
references inside for further details).

\subsection{Comparison of radial processes}

In this section, we use the sub-Laplacian comparison theorem on
Sasakian manifolds foliations in \cite{BGKT} to get estimates for
radial parts of the horizontal Brownian motion. In the Riemannian case
the method we use is due to K. Ichihara \cite{Ich}.  In the case where
the Ricci curvature is bounded from below by a constant, the method
yields the sharp Cheeger-Yau lower bound \cite{ChY} for the heat
kernel.

\

We first briefly recall the sub-Laplacian comparison theorem proved in
\cite{BGKT} to which we refer for further details. Recall the comparison functions $F_{\mathrm{Rie}}$ and $F_{\mathrm{Sas}}$ given as in respectively \eqref{FRie} and \eqref{FSas}.

\begin{theorem}[\cite{BGKT}] \label{th:SasakianComp3} Let
  $(\M,\mathcal{F},g)$ be a Sasakian foliation with sub-Riemannian
  distance $d_0$. Let $x_0\in \M$ and define $r_0(x) = d(x_0 ,x)$.
  Assume that for some $k_1, k_2 \in \mathbb{R}$
$$\mathbf{K}_{\mathcal{H},J}(v,v) \ge  k_1, \qquad \mathbf{Ric}_{\mathcal{H},J^\perp}(v,v) \ge (n-2)k_2, \qquad v \in \ch,\ \| v\|_g = 1.$$
Then outside of the $d_0$ cut-locus of $x_0$ and globally on $\M$ in
the sense of distributions,
\[
  \Delta_\Ho r_0 \le F_{\mathrm{Sas}}(r,k_1) + (n-2)
  F_{\mathrm{Rie}}(r,k_2).
\]
\end{theorem}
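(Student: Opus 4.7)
The natural approach is to prove the pointwise bound at points $x \in \M \setminus \Cut_0(x_0)$ via Jacobi-field analysis along the unique minimal sub-Riemannian geodesic $\gamma \colon [0, r_0(x)] \to \M$ from $x_0$ to $x$, and then extend to a global distributional statement via a Calabi-type barrier argument. Outside the cut locus, $r_0$ is smooth by Lemma \ref{cutlocus}, $\nabla_\Ho r_0$ coincides with $\dot\gamma$ on $\gamma$, and the horizontal Hessian of $r_0$ at $x$ can be identified with the horizontal shape operator $S(t) \colon \dot\gamma(t)^\perp \cap \ch \to \dot\gamma(t)^\perp \cap \ch$ of the sub-Riemannian geodesic sphere of radius $r_0(x)$ through $x$, so that $\Delta_\Ho r_0(x) = \mathrm{tr}\, S(r_0(x))$.

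The Sasakian structure then provides a canonical decomposition along $\gamma$ of $\dot\gamma^\perp \cap \ch$ as $\R J\dot\gamma \oplus W$ with $\dim W = n-2$. Because $J$ is parallel with respect to the Bott connection in the Sasakian setting, this splitting is preserved by the Riccati evolution, so $S(t)$ decomposes into a $J$-block and a $W$-block. The $W$-block decouples from the vertical direction and satisfies a standard matrix Riccati equation whose curvature term is controlled from above by $\mathbf{Ric}_{\ch,J^\perp}$; applying Sturm/Bishop-type comparison under the hypothesis $\mathbf{Ric}_{\ch,J^\perp} \ge (n-2) k_2$ bounds its trace by $(n-2) F_{\mathrm{Rie}}(t, k_2)$.

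The $J\dot\gamma$-block is the delicate piece: via the structural identities for Sasakian foliations, the horizontal Jacobi equation in the $J\dot\gamma$ direction couples with the vertical Jacobi equation through the Reeb field, yielding a $2\times 2$ matrix Riccati system. Solving this system in the model case with constant holomorphic sectional curvature $k_1$, together with the boundary condition at $t=0$, produces exactly the function $F_{\mathrm{Sas}}(t, k_1)$; in particular the value $4/t$ at $k_1=0$ (rather than the Riemannian $1/t$) arises from the effectively two-dimensional coupled system and matches the MCP dimension of the Heisenberg group. Sturm comparison applied to this matrix system under the scalar lower bound $\mathbf{K}_{\ch,J}(\dot\gamma,\dot\gamma) \ge k_1$ gives the dominating scalar $F_{\mathrm{Sas}}(t, k_1)$; summing the two blocks (the direction $\dot\gamma$ itself contributes zero) yields the pointwise inequality at $x$.

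For the global distributional bound, I would apply the usual Calabi barrier: for $x \in \Cut_0(x_0)$, fix a minimizing sub-Riemannian geodesic $\gamma$ from $x_0$ through $x$ and, for small $\delta > 0$, set $\tilde x_0 = \gamma(\delta)$. Then $y \mapsto \delta + r_0(\tilde x_0, y)$ is an upper barrier to $r_0(x_0,\cdot)$ near $x$, is smooth there because $\tilde x_0$ lies strictly between $x_0$ and $x$ along a minimal geodesic (hence $x \notin \Cut_0(\tilde x_0)$), and satisfies the pointwise bound with the shifted radius; letting $\delta \to 0$ promotes the inequality to a distributional one. The main obstacle is the analysis of the coupled $J$-block, where the Riccati system mixes horizontal and vertical Jacobi equations through the Sasakian structural tensors: the technical heart of the argument is the explicit identification of the model solution $F_{\mathrm{Sas}}$ and the derivation of an adapted matrix Sturm comparison that produces the correct upper bound from the scalar hypothesis on the holomorphic sectional curvature.
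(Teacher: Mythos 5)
This theorem is not proved in the present paper at all: it is imported verbatim from \cite{BGKT} (with \cite{LL} as the closely related reference), so there is no in-paper argument to measure your proposal against. What you sketch is essentially the second known route to the result, the one of Lee--Li \cite{LL} (building on the three-dimensional case of Agrachev--Lee \cite{AL1}): a Jacobi-field/Riccati comparison along the normal sub-Riemannian geodesic, with the splitting of the horizontal complement of $\dot\gamma$ into $\R J\dot\gamma\oplus W$, the $W$-block giving the Riemannian term $(n-2)F_{\mathrm{Rie}}$ and the $J\dot\gamma$-block, coupled to the vertical direction, producing $F_{\mathrm{Sas}}$. The proof in \cite{BGKT}, by contrast, never works with $r_0$ directly: it bounds $\Delta_\Ho r_\ve$ for the Riemannian approximations by an index-form argument with carefully chosen $\ve$-dependent test functions (in the spirit of Theorem \ref{comparison 1}, but exploiting the Sasakian structure so that the bound survives as $\ve\to 0$), and then passes to the sub-Riemannian limit using the smooth dependence of $r_\ve$ on $\ve$ off the cut locus (Theorem \ref{p:cutapprox}). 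Your route gives the sharp constant directly at $\ve=0$ and a cleaner geometric picture; the approximation route avoids the sub-Riemannian Jacobi-field formalism and fits the general foliation framework of Sections 2--4.

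Two caveats on your sketch, since you flag the coupled block as the heart but gloss over what makes it work. First, the identification of $\Delta_\Ho r_0$ with the trace of a ``horizontal shape operator'' on $\dot\gamma^\perp\cap\ch$ and a Riccati equation driven by Bott-connection curvature is not available naively: sub-Riemannian Jacobi fields and the curvature invariants entering the comparison live on $T^*\M$ via the canonical (Young-diagram) frames, and the coupled $J\dot\gamma$--vertical block has a degenerate, graded initial condition at the pole (the vertical Jacobi field vanishes to higher order); it is exactly this grading, not merely the block being ``effectively two-dimensional'', that produces $F_{\mathrm{Sas}}(r,0)=4/r$ instead of $2/r$. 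Any honest write-up must set up this canonical-frame Riccati system and the adapted matrix Sturm comparison, which is precisely the content of \cite{LL}. Second, for the global distributional statement your Calabi barrier needs the sub-Riemannian facts that minimizers are normal (no non-trivial abnormal minimizers, true for Sasakian, as noted after Lemma \ref{cutlocus}) and that interior points of minimizing geodesics are not cut points of the shifted center; alternatively one can argue via local semiconcavity of $r_0$ as in \cite{AL1}, which is the device this paper itself invokes in the proof of Theorem \ref{comparison 1d}. With those ingredients supplied, your outline is a faithful reconstruction of the \cite{LL} proof rather than of the \cite{BGKT} one.
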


The tensors $\mathbf{K}_{\mathcal{H},J}$ and
$\mathbf{Ric}_{\mathcal{H},J^\perp}$ are defined in \cite{BGKT}. We
omit here their definition for conciseness since they will not be
relevant in our analysis except as criteria to get the sub-Laplacian
comparison theorem.

\

As before, we fix a point $x_0 \in \M$.  For $x_1 \in \M$, we consider
the solution of the stochastic differential equation
\[
  \tilde{\xi}_t=d_0 (x_0,x_1)+\int_0^t
  \left(F_{\mathrm{Sas}}(\tilde{\xi}_s,k_1) + (n-2)
    F_{\mathrm{Rie}}(\tilde{\xi}_s,k_2) \right) ds+\sqrt{2} \beta_t
\]
where $\beta$ is a standard Brownian motion under $\P_{x_1}$.

\begin{theorem}\label{comparison 1d}
  Let $(\M,\mathcal{F},g)$ be a Sasakian foliation with sub-Riemannian
  distance $d_0$.  Assume that for some $k_1, k_2 \in \mathbb{R}$,
$$\mathbf{K}_{\mathcal{H},J}(v,v) \ge  k_1, \quad \mathbf{Ric}_{\mathcal{H},J^\perp}(v,v) \ge (n-2)k_2, \quad v \in \ch, \| v\|_g = 1.$$
Let $( ( \xi_t )_{t \ge 0} , ( \P_x )_{x \in \M} )$ be the horizontal
Brownian motion on $\M$ generated by $\Delta_{\ch}$. Then, for
$x_1 \in \M$, $R>0$, and $s \le R$
\[
  \P_{x_1}\big\{ d_0 (x_0,\xi_t) <s , \, t \le \tau_R \big\} \ge
  \P_{x_1}\big\{ \tilde{\xi}_t<s, \, t \le \tilde{\tau}_R \big\},
\]
where $\tau_R$ is the hitting time of the $d_0$ geodesic ball in $\M$
with center $x_0$ and radius $R$ and $\tilde{\tau}_R$ the hitting time
of the level $R$ by $\tilde \xi$.
\end{theorem}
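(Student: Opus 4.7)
The plan is to mimic Ichihara's Riemannian comparison strategy, adapted to handle the fact that the martingale in the sub-Riemannian radial It\^o formula (Theorem~\ref{th:Ito-radial}) has quadratic variation bounded by, but not equal to, $2t$. I would carry it out in three steps.

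\textbf{Step 1 (It\^o formula for $r_0$).} First I would derive a semimartingale decomposition for $r_0(\xi_t)$. In the Sasakian case, Theorem~\ref{p:cutapprox} gives smooth convergence $r_\ve \to r_0$ on compact subsets of $\M \setminus \Cut_0(x_0)$, and Theorem~\ref{th:SasakianComp3} supplies the sharp limiting bound $\Delta_\ch r_0 \le G(r_0)$, where $G(r) := F_{\mathrm{Sas}}(r,k_1) + (n-2)\,F_{\mathrm{Rie}}(r,k_2)$. Passing $\ve \downarrow 0$ in Theorem~\ref{th:Ito-radial}, and using Lemma~\ref{lem:neg_cutlocus} (null occupation of the cut-locus) together with Lemma~\ref{lem:neg_origin} (polarity of $x_0$), one obtains
\begin{equation*}
r_0(\xi_{t\wedge\zeta}) = r_0(x_1) + \beta^0_t + \int_0^{t\wedge\zeta}\Delta_\ch r_0(\xi_s)\,ds - l^0_{t\wedge\zeta},
\end{equation*}
with $\beta^0$ a martingale satisfying $\ang{\beta^0}_t \le 2t$, $l^0$ continuous nondecreasing, and $\Delta_\ch r_0(\xi_s) \le G(r_0(\xi_s))$ almost everywhere.

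\textbf{Step 2 (Coupling with $\tilde\xi$).} Lemma~\ref{lem:martingale} represents $\beta^0$ locally as $\int_0^t \sigma_s\,dW_s$ with $\sigma_s^2 \le 2$. I would enlarge the probability space by an independent Brownian motion $\hat B$ and set
\begin{equation*}
\sqrt{2}\,\beta_t := \beta^0_t + \int_0^t \sqrt{2-\sigma_s^2}\,d\hat B_s,
\end{equation*}
so that $\beta$ is a standard Brownian motion on the enlarged space. Define $\tilde\xi$ as the solution of the SDE in the theorem's statement driven by this $\beta$ with $\tilde\xi_0 = r_0(x_1)$; the Bessel-like behaviour $G(r) \sim c/r$ as $r \downarrow 0$ is well-studied, so $\tilde\xi$ is well-defined up to its exit time $\tilde\tau_R$.

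\textbf{Step 3 (Pathwise comparison and conclusion).} Setting $Z_t := \tilde\xi_t - r_0(\xi_{t\wedge\zeta})$, Steps~1 and~2 yield
\begin{equation*}
dZ_t = \bigl(G(\tilde\xi_t) - \Delta_\ch r_0(\xi_t)\bigr)dt + \sqrt{2-\sigma_t^2}\,d\hat B_t + dl^0_t.
\end{equation*}
At any $t$ with $Z_t = 0$, the drift equals $G(r_0(\xi_t)) - \Delta_\ch r_0(\xi_t) \ge 0$ by Step~1, while $dl^0_t \ge 0$; a Tanaka / Le~Gall local-time argument applied to $Z_t^-$ then forces $Z_t \ge 0$ almost surely. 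Therefore $r_0(\xi_t) \le \tilde\xi_t$ pathwise on the enlarged space, whence $\tau_R \ge \tilde\tau_R$ and $\{\tilde\xi_t < s,\,t\le\tilde\tau_R\} \subset \{r_0(\xi_t) < s,\,t\le\tau_R\}$. Taking $\P_{x_1}$-probabilities gives the theorem.

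The main obstacle is Step~3: because the diffusion coefficient of $r_0(\xi_t)$ is at most but not necessarily equal to $\sqrt{2}$ and may vary in time, the Ikeda-Watanabe comparison does not apply directly. The extension device of Step~2 circumvents this, but the Tanaka-type argument must be carried out carefully in view of the singularity of $G$ at $r=0$ and the presence of the local time $l^0$, which is not a priori absent from the zero set of $Z$.
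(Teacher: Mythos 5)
Your proposal takes a genuinely different route from the paper. The paper proves Theorem~\ref{comparison 1d} by a \emph{PDE comparison} in the spirit of Ichihara: it sets $u(t,x)=\E_x[\phi(d_0(x_0,\xi_t)),\,t\le\tau_R]$ and $u_0(t,r)=\E_r[\phi(\tilde\xi_t),\,t\le\tilde\tau_R]$, observes that $u_0(t,\cdot)$ is non-increasing (this monotonicity is the crucial input), checks $\partial_t v\le\Delta_\Ho v$ for $v(t,x)=u_0(t,d_0(x_0,x))$ classically outside the cut locus via Theorem~\ref{th:SasakianComp3} and then distributionally via semi-concavity (as in \cite{AL1}), and finishes with a parabolic maximum principle. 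No It\^o formula for $r_0(\xi)$ is ever invoked; the paper deliberately avoids it.

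Your Step~3 contains a genuine gap, and it is exactly the gap the paper's design avoids. The pathwise comparison $r_0(\xi_t)\le\tilde\xi_t$ does \emph{not} follow from your set-up. Because you enlarge the filtration with an independent $\hat B$ so that $\sqrt2\,\beta_t=\beta^0_t+\int_0^t\sqrt{2-\sigma_s^2}\,d\hat B_s$ is a genuine Brownian motion, the difference $Z_t=\tilde\xi_t-r_0(\xi_{t\wedge\zeta})$ picks up the martingale $\int_0^t\sqrt{2-\sigma_s^2}\,d\hat B_s$ and hence has non-degenerate quadratic variation $\langle Z\rangle_t=\int_0^t(2-\sigma_s^2)\,ds$. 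The Yamada--Watanabe / Le~Gall argument for vanishing local time at $0$ requires the diffusion coefficient of $Z$ to be a $\tfrac12$-H\"older function of $Z$ that vanishes at $Z=0$; here it is a random, adapted coefficient $\sqrt{2-\sigma_t^2}$ which in general is bounded away from zero, so there is no reason for $L^0_t(Z)$ to vanish. Worse, the drift of $Z$ at $Z=0$ is merely nonnegative and bounded near $0$, so near $0$ the process $Z$ behaves like a Brownian motion plus a locally bounded drift started at $0$; it therefore becomes strictly negative in arbitrarily small time with probability one, and the claimed pathwise inequality $r_0(\xi_t)\le\tilde\xi_t$ is false. This is why Theorem~\ref{comparison model} (which uses the Ikeda--Watanabe theorem) drives $\tilde\xi$ by the \emph{same} martingale $\beta$ from Theorem~\ref{th:Ito-radial} -- so the martingale parts cancel and $Z$ is of bounded variation -- whereas in Theorem~\ref{comparison 1d} the driving noise is a standard Brownian motion and the paper switches to the distributional PDE argument. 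Your final paragraph worries about the singularity of $G$ and the local time $l^0$, but neither is the issue; $l^0$ pushes $Z$ up and actually helps. The obstruction is $L^0_t(Z)$ coming from $\hat B$. Separately, your Step~1 (passing $\ve\downarrow 0$ in Theorem~\ref{th:Ito-radial}) is asserted without control of the convergence of the local times $l^\ve$, the martingales $\beta^\ve$, or the $\ve$-dependent cut loci; the paper never proves such an It\^o formula for $r_0$, precisely because the PDE route avoids needing one.
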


\begin{proof}
  Let $\phi$ be a non-increasing function on $[0,R]$ which is
  compactly supported on $[0,s]$. We set
  \[
    u(t,x)=\E_{x}\left[ \phi(d_0 (x_0,\xi_t)) , \, t \le \tau_R
    \right]
  \]
  and
  \[
    u_0(t,r)=\E_{r} \big[ \phi( \tilde \xi_t ) ,\, t \le
    \tilde{\tau}_R \big].
  \]
  We then have
  \begin{align*}
    \begin{cases}
      u \in C^\infty ((0,+\infty) \times B_0 (x_0,R) )\\
      \displaystyle\frac{\partial u}{\partial t}=\Delta_\Ho u \\
      u(0,x)=\phi(d_0 (x_0,x)) , \quad u(t,x)=0 \text{ if } x \in
      \partial B_0 (x_0,R)
    \end{cases}
  \end{align*}
  and
  \begin{align*}
    \begin{cases}
      u_0 \in C^\infty ((0,+\infty) \times [0,R) )\\
      \displaystyle\frac{\partial u_0}{\partial t}=L u_0 \\
      u_0(0,r)=\phi(r) , \quad u_0(t,r)=0 \text{ if } r=R,
    \end{cases}
  \end{align*}
  where
  \[
    L= \big(F_{\mathrm{Sas}}(r,k_1) + (n-2)
    F_{\mathrm{Rie}}(r,k_2)\big) \frac{\partial}{\partial r}
    +\frac{\partial^2}{\partial r^2}.
  \]
  Similarly to Lemma 2.1 in \cite{Ich}, $u_0(t,r)$ is non-increasing
  in $r$.
  
  \

  For $t \ge 0, x \in B_0(x_0,R) $ denote now
  $v(t,x)=u_0(t,d_0 (x_0,x))$. For
  $x \in B_0(x_0,R)\setminus \Cut_{x_0}(\M)$, one has then from Theorem
  \ref{th:SasakianComp3},
  \begin{align*}
    &\Delta_\Ho v (t,x)=\frac{\partial^2u_0 }{\partial r^2} (t , d_0 (x_0,x))+\Delta_\Ho r_0 (x) \frac{\partial u_0 }{\partial r} (t , d_0 (x_0,x)) \\
    & \ge \frac{\partial^2u_0 }{\partial r^2} (t , d_0 (x_0,x))+ \big(F_{\mathrm{Sas}}(d_0 (x_0,x),k_1) +    (n-2)  F_{\mathrm{Rie}}(d_0 (x_0,x),k_2) \big)\frac{\partial u_0 }{\partial r} (t , d_0 (x_0,x)) \\
    & \ge Lu_0 (t , d_0 (x_0,x))=\frac{\partial u_0}{\partial t} (t , d_0 (x_0,x))=\frac{\partial v}{\partial t} (t , x).
  \end{align*}
  Therefore, by using the semi-concavity of the sub-Riemannian
  distance and arguing as in the proof of Theorem 10.1 in \cite{AL1},
  one deduces that in the sense of distributions one has for
  $t \ge 0$, $x \in B_0(x_0,R) $,
  \[
    \frac{\partial v}{\partial t} (t , x) \le \Delta_\Ho v (t,x)
  \]
  Since
  \[
    v(0,x)=\phi(d_0 (x_0,x)) , \quad v(t,x)=0\ \text{ if } x \in
    \partial B_0 (x_0,R),
  \]
  a standard parabolic comparison theorem yields
  \[
    v(t,x)\le u(t,x).
  \]
  Taking $\phi = 1$ on $[0,s)$ yields the conclusion.
\end{proof}

As a first corollary we deduce exit time estimates:

\begin{corollary}
  Under the same assumptions as in theorem \ref{comparison 1d} one has
  \[
    \E_{x_1}\left[ \tau_R \right] \ge \E_{x_1} \left[ \tilde{\tau}_R
    \right].
  \]

\end{corollary}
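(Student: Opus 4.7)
The plan is to reduce the expected exit-time inequality to the stochastic domination already proved in Theorem~\ref{comparison 1d} by means of the standard tail formula
\[
  \E_{x_1}[\tau_R] = \int_0^\infty \P_{x_1}\{t < \tau_R\}\,dt,
\]
and the analogous identity for $\tilde{\tau}_R$. The whole point is therefore to recognize each tail probability as a special case of the measure estimate supplied by Theorem~\ref{comparison 1d}.

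First I would observe that $\tau_R$ is the first exit time from the open ball $B_0(x_0,R)$ so that, by continuity of sample paths, at $t=\tau_R$ one has $d_0(x_0,\xi_{\tau_R}) = R$. Consequently, up to a $\P_{x_1}$-null set,
\[
  \{t < \tau_R\} = \{d_0(x_0,\xi_t) < R\} \cap \{t \le \tau_R\},
\]
and the same identification holds for $\tilde{\xi}$ and $\tilde{\tau}_R$. Setting $s = R$ in Theorem~\ref{comparison 1d} (the hypothesis allows $s \le R$) then yields exactly
\[
  \P_{x_1}\{t < \tau_R\} \ge \P_{x_1}\{t < \tilde{\tau}_R\} \qquad \text{for every } t \ge 0.
\]
If one prefers to stay with strict inequality $s < R$, the same conclusion is obtained by letting $s \uparrow R$ and using monotone convergence on both sides.

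The final step is integration in $t$ over $[0,\infty)$: applying Fubini to each side yields $\E_{x_1}[\tau_R]$ on the left and $\E_{x_1}[\tilde{\tau}_R]$ on the right, completing the proof. There is no serious obstacle here; the only subtlety is the harmless identification above between $\{t < \tau_R\}$ and $\{d_0(x_0,\xi_t) < R,\, t \le \tau_R\}$, which is needed to match the event appearing in Theorem~\ref{comparison 1d} with the one arising in the tail formula. Note that the argument does not require $\E_{x_1}[\tilde{\tau}_R] < \infty$: if the one-dimensional comparison diffusion $\tilde{\xi}$ has infinite mean exit time from $[0,R)$, then so does $\xi$ from $B_0(x_0,R)$, and the stated inequality continues to hold in $[0,\infty]$.
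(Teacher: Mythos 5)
Your proof is correct and is essentially the argument the paper implicitly intends (no proof is given for the corollary, it being labelled as an immediate consequence of Theorem~\ref{comparison 1d}). Writing $\E_{x_1}[\tau_R]=\int_0^\infty \P_{x_1}\{t<\tau_R\}\,dt$ and identifying $\{t<\tau_R\}=\{d_0(x_0,\xi_t)<R,\ t\le\tau_R\}$ up to null sets, via continuity of the radial process and the fact that $d_0(x_0,\xi_{\tau_R})=R$ on $\{\tau_R<\infty\}$, reduces the claim exactly to the $s=R$ case of the theorem, which is allowed since the hypothesis is $s\le R$; integrating in $t$ finishes the argument, and your remark that the inequality remains valid in $[0,\infty]$ when the model exit time has infinite mean is a harmless but correct observation.
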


As a second corollary we deduce estimates for the integrals of the
Dirichlet heat kernel on sub-riemannian balls:

\begin{corollary}\label{comparison heat kernel}
  Let $(\M,\mathcal{F},g)$ be a Sasakian foliation. Assume that for
  some $k_1, k_2 \in \mathbb{R}$
$$\mathbf{K}_{\mathcal{H},J}(v,v) \ge  k_1, \quad \mathbf{Ric}_{\mathcal{H},J^\perp}(v,v) \ge (n-2)k_2, \quad v \in \ch,\ \| v\|_g = 1.$$
Let $R>0$ and let
$( ( \xi^R_t )_{t \ge 0} , ( \P_x )_{x \in B_0(x_0,R)} )$ be the
horizontal Brownian motion on $B_0(x_0,R)$ generated by $\Delta_{\ch}$
and with Dirichlet boundary condition. Let $p^R(t,x,y)$ be its heat
kernel with respect to the Riemannian volume measure $\mu$. Let now
$q^R(t,r_1,r_2)$ be the heat kernel with respect to the Lebesgue
measure on $[0,R]$ of the diffusion with generator
\begin{equation}\label{comparison generator}
  L_{k_1,k_2}= \big(F_{\mathrm{Sas}}(r,k_1) +  (n-2)  F_{\mathrm{Rie}}(r,k_2)\big) \frac{\partial}{\partial r} +\frac{\partial^2}{\partial r^2} 
\end{equation}
with Dirichlet boundary condition at $R$. Then, for every $s <R$,
$t>0$ and $x_1\in B_0(x_0,s)$
\[
  \int_{B_0(x_0,s)} p^R(t,x_1,y) d\mu(y) \ge \int_0^s
  q^R(t,d(x_0,x_1),r)dr.
\]

\end{corollary}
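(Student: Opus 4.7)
The plan is to recognize both sides of the inequality as one-dimensional marginals of the two Markov processes that already appear in Theorem~\ref{comparison 1d}, and then invoke that theorem directly.

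First, observe that the Dirichlet heat kernel $p^R(t,x_1,\cdot)$ is the transition density of the horizontal Brownian motion $\xi^R$ obtained by killing $\xi$ upon exiting $B_0(x_0,R)$. Therefore, for any Borel set $A \subset B_0(x_0,R)$,
\[
  \int_A p^R(t,x_1,y)\,d\mu(y) = \E_{x_1}\!\left[ 1_A(\xi_t)\,;\, t < \tau_R \right],
\]
where $\tau_R$ is the first exit time of $\xi$ from $B_0(x_0,R)$. Taking $A = B_0(x_0,s)$ identifies the left-hand side of the inequality with
\[
  \P_{x_1}\bigl\{ d_0(x_0,\xi_t) < s,\ t < \tau_R \bigr\}.
\]
(A minor point: one needs $x_1 \in B_0(x_0,s)$, i.e.~$d_0(x_0,x_1) < s$, so that $\xi$ has positive probability to still be inside $B_0(x_0,s)$ at time $t$; this is granted in the statement.)

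In exactly the same way, $q^R(t,r_1,\cdot)$ is the transition density on $[0,R]$, with respect to Lebesgue measure, of the one-dimensional diffusion $\tilde \xi$ with generator $L_{k_1,k_2}$ killed at $R$. Hence, writing $r_1 = d_0(x_0,x_1)$ and $\tilde\tau_R$ for the hitting time of $R$ by $\tilde \xi$ started at $r_1$,
\[
  \int_0^s q^R(t,r_1,r)\,dr = \P\bigl\{ \tilde \xi_t < s,\ t < \tilde\tau_R \bigr\}.
\]
Here we implicitly use that $\tilde\xi$ stays in $[0,R]$ until hitting $R$; since $r_1 < s \le R$ and $0$ is an entrance (from the form of the drift $F_{\mathrm{Sas}}(r,k_1) + (n-2)F_{\mathrm{Rie}}(r,k_2)$ near $0$), the distributional identification is standard.

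Combining these two identifications with Theorem~\ref{comparison 1d} applied to $x_1$ and $s \le R$ yields the desired inequality. The only step requiring care is the identification of the heat-kernel integral with the killed-process probability, which rests on the standard construction of Dirichlet heat kernels via path-killing; once this is in hand, the corollary is immediate. No further estimates are needed.
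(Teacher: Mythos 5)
The paper states this corollary without giving an explicit proof, so there is no paper proof to compare against; it is clearly intended as an immediate reformulation of Theorem~\ref{comparison 1d} in heat-kernel language. Your argument is exactly that reformulation and is correct: identify the left-hand side with $\P_{x_1}\{d_0(x_0,\xi_t)<s,\ t<\tau_R\}$ via the standard path-killing interpretation of the Dirichlet heat kernel, identify the right-hand side with $\P\{\tilde\xi_t<s,\ t<\tilde\tau_R\}$ via the analogous interpretation of $q^R$ with respect to Lebesgue measure, and invoke Theorem~\ref{comparison 1d}. One small remark worth making explicit: Theorem~\ref{comparison 1d} uses $t\le\tau_R$ and $t\le\tilde\tau_R$, whereas the killed-process identities naturally give $t<\tau_R$ and $t<\tilde\tau_R$; these events coincide here because on $\{t=\tau_R\}$ one has $d_0(x_0,\xi_t)=R>s$, so the constraint $d_0(x_0,\xi_t)<s$ already excludes that boundary case (and likewise $\tilde\xi_t=R>s$ on $\{t=\tilde\tau_R\}$). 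With that observation your proof is complete and needs nothing further.
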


It is interesting to note that Corollary \ref{comparison heat kernel}
does not yield a lower bound for the heat kernel $p^R$ as one could
at first expect. Indeed, when $s$ is small the volume of $B_0(x_0,s)$ has
order $s^{n+2}$, because $n+2$ is the Hausdorff dimension of $\M$ for
the metric $d_0$. On the other hand, one can directly check that
$\int_0^s q^R(t,d(x_0,x_1),r)dr$ has order $s^{n+3}$. This discrepancy is
due to the fact that on sub-Riemannian manifolds, the measure
contraction dimension is larger than the Hausdorff dimension.

\subsection{Application to Dirichlet eigenvalue estimates}\label{sec
  Cheng}

We now use the comparison theorem of the previous subsection to deduce
estimates on the first Dirichlet eigenvalue of the sub-Riemannian
balls.  This is the sub-Riemannian version of the well-known Cheng's
comparison theorem in Riemannian geometry.

For simplicity, we start with the non negative curvature case
$k_1=k_2=0$.

Let $(\M,\mathcal{F},g)$ be a Sasakian foliation with sub-Riemannian
distance $d_0$. Assume that:
$$\mathbf{K}_{\mathcal{H},J}(v,v) \ge  0, \quad \mathbf{Ric}_{\mathcal{H},J^\perp}(v,v) \ge 0, \quad v \in \ch,\ \| v\|_g = 1.$$

In that case, the one-dimensional diffusion with respect to which we
do the comparison is very simple since
\[
  L_{0,0}=\frac{n+2}{r} \frac{\partial}{\partial r}
  +\frac{\partial^2}{\partial r^2}
\]
which is a Bessel diffusion of dimension $n+3$. We recall that $n$ is
the dimension of the horizontal distribution.

\begin{theorem}
  Assume $k_1=k_2=0$. For $x_0 \in \M$ and $R>0$, let
  $\lambda_1( B_0(x_0,R))$ denote the first Dirichlet eigenvalue of
  the sub-Riemannian ball $B_0(x_0,R)$ and let
  $\tilde{\lambda}_1(d,R)$ denote the first Dirichlet eigenvalue of
  Euclidean ball with radius $R$ in $\mathbb{R}^d$. Then, for every
  $x_0 \in \M$ and $R>0$
  \[
    \lambda_1( B_0(x_0,R)) \le \tilde{\lambda}_1( n+3,R).
  \]
\end{theorem}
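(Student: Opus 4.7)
The plan is to derive the eigenvalue inequality from the heat kernel comparison of Corollary \ref{comparison heat kernel} by comparing the long-time exponential decay of both sides, following the strategy of K.~Ichihara in the Riemannian case.

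First, I would identify the one-dimensional model operator $L_{0,0} = \partial_r^2 + \frac{n+2}{r}\partial_r$ as the radial part of the standard Laplacian on $\R^{n+3}$, since the radial Laplacian on $\R^d$ reads $\partial_r^2 + \frac{d-1}{r}\partial_r$. Consequently, the diffusion on $[0,R]$ generated by $L_{0,0}$ with Dirichlet condition at $R$ is (up to radialization) the radial part of Euclidean Brownian motion on the ball of radius $R$ in $\R^{n+3}$ killed at the boundary, and the first Dirichlet eigenvalue of $L_{0,0}$ on $[0,R]$ coincides with $\tilde\lambda_1(n+3,R)$, because the first Dirichlet eigenfunction of a Euclidean ball is radial.

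Second, I would specialize Corollary \ref{comparison heat kernel} at $x_1 = x_0$ and fix some $s \in (0, R)$, yielding
\[
\int_{B_0(x_0, s)} p^R(t, x_0, y) \, d\mu(y) \; \ge \; \int_0^s q^R(t, 0, r) \, dr
\]
for every $t > 0$. The key step is then to analyze both sides as $t \to \infty$. Using the spectral decomposition of the Dirichlet semigroup associated with $\Delta_\Ho$ on $B_0(x_0, R)$, the left-hand side satisfies
\[
\int_{B_0(x_0, s)} p^R(t, x_0, y) \, d\mu(y) \; \sim \; e^{-\lambda_1(B_0(x_0,R))\, t} \, \phi_1(x_0) \int_{B_0(x_0, s)} \phi_1 \, d\mu
\]
as $t \to \infty$, where $\phi_1 > 0$ is the first normalized Dirichlet eigenfunction on $B_0(x_0,R)$. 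The corresponding spectral expansion on the right-hand side produces exponential decay at rate $\tilde\lambda_1(n+3, R)$ with a strictly positive prefactor.

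Comparing the two exponential decay rates under the inequality forces $\lambda_1(B_0(x_0, R)) \le \tilde\lambda_1(n+3, R)$. The strict positivity $\lambda_1(B_0(x_0, R)) > 0$ then follows from the Poincar\'e inequality on the relatively compact open set $B_0(x_0,R)$ applied to the Dirichlet form $\mathcal{E}_\Ho$. The main delicate point in this outline is justifying the spectral expansion and the strict positivity of $\phi_1$ in the hypoelliptic setting; this rests on the smoothness of $p^R$ coming from H\"ormander's theorem, Rellich-type compactness for the hypoelliptic Dirichlet problem which gives a discrete spectrum, and a Krein-Rutman / strong maximum principle argument (made available by the strict positivity of $p^R$ on $B_0(x_0, R)$) to conclude that the first eigenfunction is simple and positive.
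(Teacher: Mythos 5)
Your proposal is correct and follows essentially the same route as the paper: apply Corollary \ref{comparison heat kernel}, expand $p^R$ and $q^R$ spectrally, and compare the large-time exponential decay rates to conclude $\lambda_1 \le \tilde\lambda_1(n+3,R)$. The additional remarks you include — identifying $L_{0,0}$ with the radial Laplacian on $\R^{n+3}$, noting that the first Dirichlet eigenfunction of the Euclidean ball is radial, the Poincar\'e inequality for strict positivity, and the hypoelliptic versions of discreteness of spectrum and positivity of $\phi_1$ via Krein--Rutman — are good justifications that the paper leaves implicit, but the core argument is identical.
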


\begin{proof}
  From spectral theory, one has
  \[
    p^R(t,x_1,y) =\sum_{k=1}^{+\infty} e^{-\lambda _k t} \phi_k (x_1)
    \phi_k(y)
  \]
  where the $\lambda_k$'s are the Dirichlet eigenvalues of
  $B_0(x_0,R)$ and the $\phi_k$'s the eigenfunctions.  One has
  similarly
  \[
    q^R(t,r_0,r)=r^{n+2} \sum_{k=1}^{+\infty} e^{-\tilde{\lambda} _k
      t} \tilde{\phi}_k (r_0) \tilde{\phi}_k(r)
  \]
  Thus, from Corollary \ref{comparison heat kernel}, when
  $t \to +\infty$ one must have $\lambda_1 \le \tilde{\lambda}_1$.
\end{proof}

For general $k_1,k_2$ one can similarly prove the following theorem:

\begin{theorem}
  For $x_0 \in \M$ and $R>0$, let $\lambda_1( B_0(x_0,R))$ denote the
  first Dirichlet eigenvalue of the sub-Riemannian ball $B_0(x_0,R)$
  and let $\tilde{\lambda}_1(n, k_1,k_2,R)$ denote the first Dirichlet
  eigenvalue of the operator
  \[
    L_{k_1,k_2}= \big(F_{\mathrm{Sas}}(r,k_1) + (n-2)
    F_{\mathrm{Rie}}(r,k_2)\big) \frac{\partial}{\partial r}
    +\frac{\partial^2}{\partial r^2}
  \]
  on the interval $[0,R]$ with Dirichlet boundary condition at
  $R$. Then, for every $x_0 \in \M$ and $R>0$
  \[
    \lambda_1( B_0(x_0,R)) \le \tilde{\lambda}_1(n, k_1,k_2,R).
  \]
\end{theorem}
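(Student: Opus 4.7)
The plan is to mimic exactly the structure of the proof of the previous theorem (the $k_1=k_2=0$ case), now using Corollary~\ref{comparison heat kernel} in full generality together with the long-time spectral asymptotics of the two Dirichlet heat kernels. The key input is the inequality
$$\int_{B_0(x_0,s)} p^R(t,x_1,y)\, d\mu(y)\;\ge\;\int_0^s q^R(t,d_0(x_0,x_1),r)\, dr,$$
valid for every $s\in(0,R)$, $t>0$, and $x_1\in B_0(x_0,s)$.

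First I would set up spectral expansions on both sides. Since $B_0(x_0,R)$ is relatively compact in $\M$ (closed sub-Riemannian balls being compact, because $d_0 \ge d_\ve$ for every $\ve>0$) and $\Delta_\ch$ is hypoelliptic and essentially self-adjoint on $L^2(\mu)$, the Dirichlet realization $\Delta_\ch^R$ on $B_0(x_0,R)$ has compact resolvent, and therefore a discrete spectrum $0<\lambda_1\le\lambda_2\le\cdots$ with smooth $L^2(\mu)$-orthonormal eigenfunctions $(\phi_k)$; hence
$$p^R(t,x_1,y)=\sum_{k=1}^\infty e^{-\lambda_k t}\phi_k(x_1)\phi_k(y).$$
On the one-dimensional side, $L_{k_1,k_2}$ is formally self-adjoint with respect to the speed measure $m(r)\, dr$ on $(0,R)$, where
$$m(r)=\exp\!\int^{r}\bigl(F_{\mathrm{Sas}}(s,k_1)+(n-2)F_{\mathrm{Rie}}(s,k_2)\bigr)\,ds.$$
Classical Sturm--Liouville theory then yields a discrete spectrum $(\tilde\lambda_k)$ with $L^2(m\,dr)$-orthonormal eigenfunctions $(\tilde\phi_k)$ and
$$q^R(t,r_1,r)=m(r)\sum_{k=1}^\infty e^{-\tilde\lambda_k t}\tilde\phi_k(r_1)\tilde\phi_k(r).$$

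Second, I would pick $x_1\in B_0(x_0,s)$ with $\phi_1(x_1)>0$; such a point exists because the ground state $\phi_1$ may be chosen strictly positive on $B_0(x_0,R)$ by the Perron--Frobenius principle applied to the positivity-preserving semigroup $e^{t\Delta_\ch^R}$, the strict positivity of the kernel $p^R$ being a consequence of the hypoellipticity of $\Delta_\ch$ and Bony's strong maximum principle for sums of squares. Similarly $\tilde\phi_1>0$ on $(0,R)$. Feeding the spectral expansions into the comparison inequality above gives, as $t\to\infty$,
$$\int_{B_0(x_0,s)}p^R(t,x_1,y)\,d\mu(y)\sim e^{-\lambda_1 t}\,\phi_1(x_1)\!\int_{B_0(x_0,s)}\phi_1(y)\, d\mu(y),$$
$$\int_0^s q^R(t,d_0(x_0,x_1),r)\, dr\sim e^{-\tilde\lambda_1 t}\,\tilde\phi_1(d_0(x_0,x_1))\!\int_0^s\tilde\phi_1(r)m(r)\, dr,$$
and by the previous paragraph both prefactors are strictly positive. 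Were $\lambda_1>\tilde\lambda_1$, the left-hand side would decay strictly faster than the right-hand side, contradicting Corollary~\ref{comparison heat kernel} for all sufficiently large $t$. Hence $\lambda_1(B_0(x_0,R))\le\tilde\lambda_1(n,k_1,k_2,R)$, as required.

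The main technical obstacle I anticipate is the strict positivity of the first Dirichlet eigenfunction $\phi_1$ of $\Delta_\ch^R$ on the open sub-Riemannian ball $B_0(x_0,R)$, which is what ensures that the leading coefficient in the spectral expansion of $p^R$ does not vanish. In the Riemannian setting this is immediate from the strong maximum principle for uniformly elliptic operators; in the sub-Riemannian setting it requires the subelliptic strong maximum principle / parabolic Harnack inequality available under the bracket-generating hypothesis on $\ch$, but is standard. A secondary, milder nuisance is the regular singularity at $r=0$ of $L_{k_1,k_2}$ (the drift behaves like $(n+2)/r$ there), but this is exactly of the form arising for the radial Laplacian in Euclidean space, so the Sturm--Liouville theory on $[0,R]$ with a Dirichlet condition only at $R$ and a regularity condition at $0$ carries over verbatim.
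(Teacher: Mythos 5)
Your proposal is correct and follows essentially the same route as the paper, which for the general $(k_1,k_2)$ case simply remarks that "one can similarly prove" it from the $k_1=k_2=0$ argument: spectral expansions of $p^R$ and $q^R$ (with the speed measure $m(r)$ replacing $r^{n+2}$), fed into Corollary~\ref{comparison heat kernel} and compared as $t\to\infty$. The paper leaves the positivity of the ground states and the behavior at the regular singular point $r=0$ implicit; you supply these details, but the underlying argument is the same.
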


\subsection{Large time behavior and law of iterated logarithm for the
  radial processes} \label{sec:Htype}

In this section we study large time behaviour of the radial process in
negative curvature.

\begin{proposition}
  Let $(\M,\mathcal{F},g)$ be a Sasakian foliation. Assume that for
  some $k_1, k_2 \le 0$
$$\mathbf{K}_{\mathcal{H},J}(v,v) \ge  k_1, \quad \mathbf{Ric}_{\mathcal{H},J^\perp}(v,v) \ge (n-2)k_2,
\quad v \in \ch,\ \| v\|_g = 1.$$ Let
$( ( \xi_t )_{t \ge 0} , ( \P_x )_{x \in \M} )$ be the sub-Riemannian
Brownian motion generated by $\Delta_{\ch}$.  Then for every
$x_0,x_1 \in \M$,
\[
  \P_{x_1} \left( \limsup_{t \to +\infty} \frac{d_0(x_0 , \xi_t)}{t}
    \le (n-2) \sqrt{|k_2|} + \sqrt{|k_1|} \right)=1
\]
\end{proposition}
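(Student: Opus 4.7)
The strategy is to couple $d_0(x_0,\xi_t)$ with the one–dimensional comparison diffusion $\tilde\xi_t$ of Theorem \ref{comparison 1d} and then carry out a strong–law analysis for $\tilde\xi$.

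\emph{Step 1: Asymptotics of the drift coefficient.} From the explicit formulas \eqref{FRie}--\eqref{FSas} one checks directly that, for $k_1,k_2\le 0$,
\[
 F_{\mathrm{Rie}}(r,k_2)\ \longrightarrow\ \sqrt{|k_2|},\qquad F_{\mathrm{Sas}}(r,k_1)\ \longrightarrow\ \sqrt{|k_1|}\qquad\text{as }r\to\infty,
\]
with both limits consistent with the case $k=0$ (where $4/r$ and $1/r$ both vanish at infinity). Hence the drift
\[
 \mu(r):=F_{\mathrm{Sas}}(r,k_1)+(n-2)F_{\mathrm{Rie}}(r,k_2)
\]
of $\tilde\xi$ converges to $\mu_\infty:=\sqrt{|k_1|}+(n-2)\sqrt{|k_2|}$ at infinity, and for every $\delta>0$ there exists $R_\delta>0$ such that $\mu(r)\le \mu_\infty+\delta$ for all $r\ge R_\delta$.

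\emph{Step 2: Pathwise coupling $d_0(x_0,\xi_t)\le \tilde\xi_t$.} I would realize $\xi$ and $\tilde\xi$ on a common probability space so that the pathwise inequality $d_0(x_0,\xi_t)\le \tilde\xi_t$ holds a.s.\ for every $t\ge 0$. This is achieved by applying the It\^o formula Theorem \ref{th:Ito-radial} to $r_\varepsilon$, combining it with the sub-Laplacian bound of Theorem \ref{th:SasakianComp3}, passing to the limit $\varepsilon\downarrow 0$ using the uniform convergence supplied by Theorem \ref{p:cutapprox}, and then invoking the Ikeda--Watanabe comparison theorem exactly as in the proof of Theorem \ref{comparison model}. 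Non-explosion of $\xi$, required to propagate the comparison to all times, is guaranteed by Theorem \ref{thm:S-cpl} under the present curvature hypotheses.

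\emph{Step 3: Almost sure asymptotic bound for $\tilde\xi$.} The diffusion $\tilde\xi$ is transient: if $\mu_\infty>0$ this follows from the asymptotically positive drift through a standard scale–function computation, while if $\mu_\infty=0$ (i.e.\ $k_1=k_2=0$) then $\tilde\xi$ is a Bessel process of dimension $n+3\ge 4$ and hence transient. Consequently $\tilde\xi_t\to +\infty$ a.s., and for each $\delta>0$ there is an a.s.\ finite random time $T_\delta$ such that $\tilde\xi_t\ge R_\delta$ for every $t\ge T_\delta$. Integrating the SDE defining $\tilde\xi$ then yields, for $t\ge T_\delta$,
\[
 \tilde\xi_t\le \tilde\xi_{T_\delta}+(\mu_\infty+\delta)(t-T_\delta)+\sqrt{2}\bigl(\beta_t-\beta_{T_\delta}\bigr).
\]
Dividing by $t$ and using the strong law of large numbers for Brownian motion ($\beta_t/t\to 0$ a.s.) gives $\limsup_{t\to\infty}\tilde\xi_t/t\le \mu_\infty+\delta$ a.s.; letting $\delta\downarrow 0$ through a countable sequence yields $\limsup_{t\to\infty}\tilde\xi_t/t\le \mu_\infty$ a.s. Combined with Step~2 this proves the proposition.

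\emph{Main obstacle.} The only genuine technical point is the pathwise coupling in Step 2: Theorem \ref{comparison 1d} is stated as a distributional PDE comparison, and the pathwise version must be recovered through the Riemannian approximations $r_\varepsilon$, the It\^o formula of Theorem \ref{th:Ito-radial}, and a passage to the limit $\varepsilon\downarrow 0$. Everything else -- the asymptotic analysis of $F_{\mathrm{Sas}}$ and $F_{\mathrm{Rie}}$, the transience of $\tilde\xi$, and the application of the SLLN for Brownian motion -- is routine.
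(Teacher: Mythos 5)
Your Steps~1 and~3 are correct and, in fact, spell out details that the paper leaves to a citation: the paper's proof simply notes the limits of $F_{\mathrm{Sas}}$ and $F_{\mathrm{Rie}}$, asserts transience of the one-dimensional diffusion, and refers to ``similar arguments as in Example~2.1 in [Ich].'' The genuine difference is in Step~2: you try to establish a \emph{pathwise} coupling $d_0(x_0,\xi_t)\le\tilde\xi_t$, whereas the paper's Theorem~\ref{comparison 1d} is proved by a parabolic PDE comparison and gives only the \emph{distributional} inequality $\P_{x_1}\{d_0(x_0,\xi_t)<s,\ t\le\tau_R\}\ge\P_{x_1}\{\tilde\xi_t<s,\ t\le\tilde\tau_R\}$. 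The paper's intended route (following Ichihara) is to extract the almost-sure bound from this distributional comparison via exponential moment bounds and a Borel--Cantelli argument along a discrete time grid, rather than from a pathwise inequality.

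The pathwise coupling you sketch in Step~2 is where your proposal has a real gap, and it is more delicate than you indicate. First, Theorem~\ref{th:SasakianComp3} bounds $\Delta_\ch r_0$, while the It\^o formula of Theorem~\ref{th:Ito-radial} applies to $r_\ve$ with $\ve>0$; combining them directly is not meaningful. One needs the $\ve$-dependent version of the Sasakian sub-Laplacian comparison theorem from \cite{BGKT}, which is not stated in this paper. Second, even granting such a bound, the martingale $\beta=\beta^{(\ve)}$ in Theorem~\ref{th:Ito-radial} depends on $\ve$, only satisfies $\langle\beta\rangle_t\le 2t$ (it is \emph{not} a Brownian motion, as the introduction emphasizes), and the comparison diffusion $\tilde\xi$ in Theorem~\ref{comparison 1d} is driven by $\sqrt{2}$ times a \emph{standard} Brownian motion. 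The Ikeda--Watanabe comparison theorem, used as in the proof of Theorem~\ref{comparison model}, couples $r_\ve(\xi_t)$ to an auxiliary process $\tilde\xi^{(\ve)}$ driven by the \emph{same} martingale $\beta^{(\ve)}$, not to the model diffusion of Theorem~\ref{comparison 1d}; passing to the limit $\ve\downarrow 0$ in the coupled SDEs is not a consequence of Theorem~\ref{p:cutapprox} alone, since one must also control the convergence of $\beta^{(\ve)}$ and of the local times.

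A cleaner way to repair Step~2 while keeping your overall strategy: fix $\ve>0$, use the $\ve$-version of the Sasakian comparison from \cite{BGKT} together with Theorem~\ref{th:Ito-radial} and Ikeda--Watanabe to obtain the pathwise bound $r_\ve(\xi_t)\le\tilde\xi^{(\ve)}_t$ with $\tilde\xi^{(\ve)}$ driven by $\beta^{(\ve)}$; then run your Step~3 directly on $\tilde\xi^{(\ve)}$. The SLLN conclusion $\beta^{(\ve)}_t/t\to 0$ a.s.\ follows for general martingales from $\langle\beta^{(\ve)}\rangle_t\le 2t$ via the Dambis--Dubins--Schwarz time-change, so you do not need $\beta^{(\ve)}$ to be a Brownian motion; and transience of $\tilde\xi^{(\ve)}$ holds because the drift is strictly positive everywhere. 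This gives $\limsup_t r_\ve(\xi_t)/t\le\mu^{(\ve)}_\infty$ a.s.\ for each $\ve>0$. Since $r_\ve\ge r_0$ pointwise and the $\ve$-dependent drift bound converges to $F_{\mathrm{Sas}}(\cdot,k_1)+(n-2)F_{\mathrm{Rie}}(\cdot,k_2)$ as $\ve\downarrow 0$, one then lets $\ve\downarrow 0$ in the constants only, avoiding any limit of coupled SDEs. This route stays closer to what the paper's machinery actually provides.
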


\begin{proof}
  We note that when $k_1,k_2 \le 0$, the diffusion with generator
  \[
    L_{k_1,k_2}= \big(F_{\mathrm{Sas}}(r,k_1) + (n-2)
    F_{\mathrm{Rie}}(r,k_2)\big) \frac{\partial}{\partial r}
    +\frac{\partial^2}{\partial r^2}
  \]
  is transient and that
  \[
    \lim_{r \to +\infty}F_{\mathrm{Sas}}(r,k_1)=\sqrt{|k_1|} , \quad
    \lim_{r \to +\infty}F_{\mathrm{Rie}}(r,k_2))=\sqrt{|k_2|}.
  \]
  The result follows then from similar arguments as in Example 2.1 in
  \cite{Ich}.
\end{proof}

When $k_1,k_2=0$, the above estimate can be refined and we obtain a
law of iterated logarithm.

\begin{proposition}
  Let $(\M,\mathcal{F},g)$ be a Sasakian foliation.  Assume
  that
  $$\mathbf{K}_{\mathcal{H},J}(v,v) \ge 0, \quad
  \mathbf{Ric}_{\mathcal{H},J^\perp}(v,v) \ge 0, \quad v \in \ch,\ \|
  v\|_g = 1.$$ Let $( ( \xi_t )_{t \ge 0} , ( \P_x )_{x \in \M} )$ be
  the sub-Riemannian Brownian motion generated by $\Delta_{\ch}$.
  Then for every $x_0,x_1 \in \M$,
  \[
    \P_{x_1} \left( \limsup_{t \to +\infty} \frac{d_0(x_0 ,
        \xi_t)}{\sqrt{2t \ln \ln t} } \le 1 \right)=1
  \]
\end{proposition}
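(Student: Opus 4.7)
The plan is to reduce, via the comparison Theorem~\ref{comparison 1d}, to the law of iterated logarithm for the one-dimensional diffusion $\tilde\xi$, which for $k_1=k_2=0$ turns out to be (a trivial time rescaling of) a Bessel process of integer dimension $n+3$. Since the Khintchine LIL for Bessel processes is classical, the real content lies in the transfer step from $\tilde\xi$ back to $d_0(x_0,\xi_t)$.

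Explicitly, with $k_1=k_2=0$ one has $F_{\mathrm{Sas}}(r,0)+(n-2)F_{\mathrm{Rie}}(r,0)=\tfrac{4}{r}+\tfrac{n-2}{r}=\tfrac{n+2}{r}$, so the SDE for $\tilde\xi$ in Theorem~\ref{comparison 1d} specialises to
\[
  d\tilde\xi_t = \frac{n+2}{\tilde\xi_t}\,dt + \sqrt{2}\,d\beta_t.
\]
The deterministic time change $\eta_s := \tilde\xi_{s/2}$ satisfies $d\eta_s = d\tilde\beta_s + \tfrac{n+2}{2\eta_s}\,ds$ for some standard Brownian motion $\tilde\beta$, i.e.\ $\eta$ is exactly a standard Bessel process of dimension $n+3$. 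The classical law of iterated logarithm for Bessel processes (equivalently, Khintchine's LIL for the modulus of Brownian motion in $\R^{n+3}$) gives $\limsup_{s\to\infty}\eta_s/\sqrt{2s\ln\ln s}=1$ a.s., which translates back, via the change of time, into the corresponding upper asymptotic for $\tilde\xi_t$.

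It remains to dominate $d_0(x_0,\xi_t)$ by $\tilde\xi_t$ in a way compatible with almost-sure limsups. The natural route is to combine the semimartingale decomposition of Theorem~\ref{th:Ito-radial} (after taking the sub-Riemannian limit $\varepsilon\downarrow 0$) with the sharp sub-Laplacian bound of Theorem~\ref{th:SasakianComp3} and apply a one-dimensional Ikeda--Watanabe comparison; the local-time term $l_t$ at the cut locus has the favourable sign and therefore only reinforces the inequality. Stochastic completeness of $\xi$, which is needed to make sense of the statement for all $t$, is supplied by Theorem~\ref{thm:S-cpl}. The main obstacle is exactly this pathwise comparison: passing to the sub-Riemannian limit in the Itô decomposition requires care with the singularity of $r_0$ at $x_0$ and along the cut locus. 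If a pathwise coupling proves too delicate, a robust fallback is to use the distributional tail bound of Theorem~\ref{comparison 1d} along a geometric sequence $t_k=\alpha^k$ and conclude by Borel--Cantelli that $d_0(x_0,\xi_{t_k}) \le (1+\varepsilon)\sqrt{2t_k\ln\ln t_k}$ eventually for every $\varepsilon>0$, with the passage to a continuous-time limsup handled by a standard oscillation estimate on the intervals $[t_{k-1},t_k]$.
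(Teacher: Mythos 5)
Your plan matches the spirit of the paper's proof, which simply refers to Ichihara's Theorem~3.1: reduce via the comparison Theorem~\ref{comparison 1d} to the one-dimensional diffusion $\tilde\xi$, identify it (after a deterministic time change) as a Bessel process of dimension $n+3$, invoke the classical Khintchine LIL, and transfer back by pathwise comparison or by a Borel--Cantelli/oscillation argument. The time-change computation is correct: with $k_1=k_2=0$ the drift of $\tilde\xi$ is $(n+2)/r$, and $\eta_s := \tilde\xi_{s/2}$ is a standard Bessel process of dimension $n+3$, for which $\limsup_{s\to\infty}\eta_s/\sqrt{2s\ln\ln s}=1$ almost surely.

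The gap is in the step you leave implicit: the translation ``back via the change of time.'' Since $\tilde\xi_t = \eta_{2t}$, one has
\[
  \limsup_{t\to\infty}\frac{\tilde\xi_t}{\sqrt{2t\ln\ln t}}
  = \limsup_{s\to\infty}\frac{\eta_s}{\sqrt{s\ln\ln(s/2)}}
  = \sqrt{2}\,\limsup_{s\to\infty}\frac{\eta_s}{\sqrt{2s\ln\ln s}}
  = \sqrt{2},
\]
not $1$. Equivalently, the comparison generator $L_{0,0}=\tfrac{n+2}{r}\partial_r+\partial_r^2$ has diffusion coefficient $\sqrt{2}$, so $\tilde\xi_t$ has Gaussian-type tails $\exp(-x^2/4t)$ rather than $\exp(-x^2/2t)$; your Borel--Cantelli estimate along $t_k=\alpha^k$ then only sums for $1+\varepsilon>\sqrt{2}$, and the fallback route also delivers $\limsup\le\sqrt{2}$. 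Thus, correctly executed, your method proves
\[
  \P_{x_1}\left(\limsup_{t\to\infty}\frac{d_0(x_0,\xi_t)}{\sqrt{2t\ln\ln t}}\le\sqrt{2}\right)=1,
\]
and cannot reach the constant $1$ asserted in the statement. In fact the constant $1$ is \emph{false} in general with the paper's normalization: in the Heisenberg group the horizontal projection $(\xi^1_t,\xi^2_t)$ of the process generated by $\Delta_\Ho$ is a planar Brownian motion run at double speed, and since $d_0(0,\xi_t)\ge|(\xi^1_t,\xi^2_t)|$ one already gets $\limsup_{t}d_0(0,\xi_t)/\sqrt{2t\ln\ln t}\ge\sqrt{2}$ a.s. So the sharp constant is $\sqrt{2}$; this is a normalization slip in the stated proposition, tied to the choice of generator $\Delta_\Ho$ rather than $\tfrac12\Delta_\Ho$. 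You should flag this discrepancy rather than silently asserting the claimed constant. Aside from this, the rest of the plan (pathwise Ikeda--Watanabe comparison with the favourable sign of the local time at the cut locus, or distributional tails plus an oscillation estimate, together with stochastic completeness from Theorem~\ref{thm:S-cpl}) is sound.
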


\begin{proof}
  The proof is similar to the one of Theorem 3.1 in \cite{Ich}, so we
  omit the details for conciseness.
\end{proof}

\subsection{Some extensions: H-type groups}

In the recent work $\cite{BGMR}$ sub-Laplacian comparison theorems
have been obtained in a more general setting than Sasakian, the
setting of H-type sub-Riemannian spaces. In particular, this was
proved that if $(\M, \mathcal F)$ is the totally geodesic foliation on
a H-type group, then one has
\[
  \Delta_\Ho r_0 \le \frac{n+3m-1}{r_0}
\]
classically outside of the cut-locus and globally in the sense of distributions,
where as usual $n$ denotes the dimension of the horizontal bundle and
$m$ denotes the codimension of this horizontal bundle. In that
setting, all the results obtained in this section may be generalized
with identical proofs. In particular, one obtains the following
Cheng's type theorem for the Dirichlet eigenvalues of sub-Riemannian
balls in H-type groups.

\begin{theorem}
  Assume that $\M$ is an H-type group. For $x_0 \in \M$ and $R>0$, let
  $\lambda_1( B_0(x_0,R))$ denote the first Dirichlet eigenvalue of
  the sub-Riemannian ball $B_0(x_0,R)$ and let
  $\tilde{\lambda}_1(d,R)$ denote the first Dirichlet eigenvalue of
  Euclidean ball with radius $R$ in $\mathbb{R}^d$. Then, for every
  $x_0 \in \M$ and $R>0$,
  \[
    \lambda_1( B_0(x_0,R)) \le \tilde{\lambda}_1( n+3m,R).
  \]
\end{theorem}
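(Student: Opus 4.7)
The plan is to repeat the Sasakian chain of arguments (Theorem \ref{th:Ito-radial} $\Rightarrow$ Theorem \ref{comparison 1d} $\Rightarrow$ Corollary \ref{comparison heat kernel} $\Rightarrow$ Cheng-type eigenvalue bound), with the Sasakian pointwise bound on $\Delta_\Ho r_0$ replaced by the H-type bound stated in this subsection, namely $\Delta_\Ho r_0 \le (n+3m-1)/r_0$ (valid classically off the cut-locus and globally in the distributional sense). Note that the comparison generator
\[
L = \frac{n+3m-1}{r}\,\frac{\partial}{\partial r} + \frac{\partial^2}{\partial r^2}
\]
is exactly the radial part of the Euclidean Laplacian on $\mathbb{R}^{n+3m}$, so its first Dirichlet eigenvalue on $[0,R]$ equals $\tilde{\lambda}_1(n+3m,R)$; proving $\lambda_1(B_0(x_0,R))\le\tilde{\lambda}_1(n+3m,R)$ therefore reduces to comparing $\lambda_1(B_0(x_0,R))$ to the first Dirichlet eigenvalue of $L$ on $[0,R]$.

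First, I would establish the H-type analog of the Itô formula for $r_0(\xi_t)$: by the same cut-locus estimates used in Section \ref{sec:RadialProcess} (Lemmas \ref{lem:neg_cutlocus}, \ref{lem:neg_origin}, \ref{lem:fund} and Proposition \ref{prop:super}, whose proofs only require semi-concavity of the sub-Riemannian distance and a local upper bound for $\Delta_\Ho r_0$ outside the cut-locus), one obtains a semimartingale decomposition
\[
r_0(\xi_{t\wedge\zeta}) = r_0(\xi_0) + \beta_t + \int_0^{t\wedge\zeta}\Delta_\Ho r_0(\xi_s)\,ds - l_{t\wedge\zeta},
\]
with $\langle\beta\rangle_t \le 2t$ and $l$ increasing only on the cut-locus. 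Applying the H-type comparison $\Delta_\Ho r_0 \le (n+3m-1)/r_0$ pointwise and the Ikeda–Watanabe theorem exactly as in the proof of Theorem \ref{comparison 1d}, the radial process is stochastically dominated by the one-dimensional diffusion with generator $L$. This yields the H-type analog of Corollary \ref{comparison heat kernel}: for every $s<R$, $t>0$ and $x_1\in B_0(x_0,s)$,
\[
\int_{B_0(x_0,s)} p^R(t,x_1,y)\,d\mu(y) \ge \int_0^s q^R\bigl(t,d_0(x_0,x_1),r\bigr)\,dr,
\]
where $q^R$ denotes the Dirichlet heat kernel of $L$ on $[0,R]$ with respect to Lebesgue measure.

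Finally, as in the Sasakian Cheng proof, expand the two heat kernels spectrally: writing $\{\lambda_k\}$ and $\{\tilde\lambda_k\}$ for the Dirichlet eigenvalues of $\Delta_\Ho$ on $B_0(x_0,R)$ and of $L$ on $[0,R]$ respectively, and noting that the symmetrizing measure for $L$ is $r^{n+3m-1}\,dr$,
\[
p^R(t,x_1,y) = \sum_{k\ge 1} e^{-\lambda_k t}\phi_k(x_1)\phi_k(y), \qquad q^R(t,r_1,r_2) = r_2^{n+3m-1}\sum_{k\ge 1} e^{-\tilde\lambda_k t}\tilde\phi_k(r_1)\tilde\phi_k(r_2).
\]
Inserting these into the heat-kernel inequality and letting $t\to\infty$, the leading exponentials force $\lambda_1 \le \tilde\lambda_1$, and since $\tilde\lambda_1$ for $L$ on $[0,R]$ coincides with $\tilde\lambda_1(n+3m,R)$, the theorem follows. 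The only real point to check is that the argument of Section \ref{sec:RadialProcess} genuinely goes through in the H-type setting; this is the main (mild) obstacle, and it reduces to the facts that the H-type sub-Riemannian distance is locally semi-concave, its cut-locus has measure zero, and the pointwise bound $\Delta_\Ho r_0 \le (n+3m-1)/r_0$ can be localized as required by Lemma \ref{lem:fund}, all of which are known.
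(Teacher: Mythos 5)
Your target (the heat-kernel comparison followed by spectral expansion and letting $t\to\infty$) matches the paper's, and the paper itself says the H-type case follows ``with identical proofs.'' However, the \emph{route you take to the heat-kernel comparison is not the one the paper uses}, and it has a genuine gap.

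You propose first to establish an It\^o formula for the sub-Riemannian radial process $r_0(\xi_t)$ by re-running Lemmas \ref{lem:neg_cutlocus}--\ref{lem:fund} and Proposition~\ref{prop:super} with $\ve=0$, and then to invoke Ikeda--Watanabe ``exactly as in the proof of Theorem~\ref{comparison 1d}.'' Two objections. First, Theorem~\ref{comparison 1d} does \emph{not} use Ikeda--Watanabe at all; its proof is Ichihara's parabolic comparison: one takes the \emph{distributional} bound $\Delta_\Ho r_0 \le F_{\mathrm{Sas}}+(n-2)F_{\mathrm{Rie}}$, shows that $v(t,x)=u_0(t,d_0(x_0,x))$ is a distributional subsolution of $\partial_t v \le \Delta_\Ho v$, and applies a parabolic maximum principle. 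Ikeda--Watanabe is only used in Theorem~\ref{comparison model}, which concerns $r_\ve$ with $\ve>0$. Second, and more seriously, the It\^o-formula machinery of Section~\ref{sec:RadialProcess} does \emph{not} carry over to $\ve=0$ as you claim: the construction of Lemma~\ref{lem:fund} requires choosing $R_2$ so that $d_\ve(x_0,\Cut_\ve(x_0))>R_2^{-1}$, which is possible for $\ve>0$ but fails for $\ve=0$ on any genuinely sub-Riemannian manifold because the sub-Riemannian cut locus accumulates at the base point $x_0$ (e.g.\ in the Heisenberg group it contains the whole punctured vertical axis). The compact set $A$, the positivity of $\delta_1$, and hence Lemma~\ref{lem:fund} all break at $\ve=0$, and the paper is careful never to claim Theorem~\ref{th:Ito-radial} for $r_0$. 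The Ichihara/parabolic route sidesteps exactly this difficulty by only needing the distributional inequality for $\Delta_\Ho r_0$, which is supplied globally by the comparison theorem. So the correct H-type proof is: take the distributional bound $\Delta_\Ho r_0 \le (n+3m-1)/r_0$ from \cite{BGMR}, run the proof of Theorem~\ref{comparison 1d} verbatim to get the analogue of Corollary~\ref{comparison heat kernel}, then your spectral-expansion and $t\to\infty$ argument (which is fine, and matches the paper) closes the proof.
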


Note that in contrast to the Riemannian case, here the comparison diffusion is not realized as the radial process of a sub-Riemannian manifold (and in particular, it is not the radial process of the Heisenberg group, which fails to be a diffusion), and thus there is no reason to suspect that the bound is sharp.


\begin{thebibliography}{10}
% 
\bibitem{A} A. Agrachev, \emph{Any sub-Riemannian metric has points of
    smoothness}. Dokl. Math. 79 (2009), no.~1, 45--47.
%

\bibitem{ABB} A.~Agrachev, D.~Barilari, U.~Boscain, A comprehensive
  introduction to sub-Riemannian geometry, Cambridge Studies in
  Advanced Mathematics 181, Cambridge University Press, 2019.

\bibitem{AL1} A. Agrachev, P. Lee, \emph{Bishop and Laplacian
    comparison theorems on three-dimensional contact sub-Riemannian
    manifolds with symmetry}.  J.~Geom. Anal. 25 (2015), no. 1,
  512--535.

\bibitem{BD} E. Barletta, S. Dragomir, \emph{Jacobi fields of the
    Tanaka-Webster connection on Sasakian manifolds}. Kodai
  Math. J. 29 (2006), no.~3, 406--454.

\bibitem{B2} F. Baudoin, \emph{Sub-Laplacians and hypoelliptic
    operators on totally geodesic Riemannian foliations}.  Geometry,
  analysis and dynamics on sub-Riemannian manifolds. Vol. 1, 259--321,
  EMS Ser. Lect. Math., Eur. Math. Soc., Z\"urich, 2016.
   
\bibitem{Bau} F. Baudoin, \emph{Stochastic analysis on sub-Riemannian
    manifolds with transverse symmetries}. Ann.~Probab.~45 (2017),
  no. 1, 56--81.

\bibitem{BFG} F. Baudoin, Q. Feng, M. Gordina, \emph{Integration by
    parts and quasi-invariance for the horizontal Wiener measure on
    foliated compact manifolds}. J.~Funct. Anal. 277 (2019), no.~5,
  1362--1422

\bibitem{BG} F. Baudoin, N. Garofalo, \emph{Curvature-dimension
    inequalities and Ricci lower bounds for sub-Riemannian manifolds
    with transverse symmetries}. J.~Eur. Math. Soc. (JEMS) 19 (2017),
  no. 1, 151--219.
 
\bibitem{BGMR} F. Baudoin, E. Grong, G. Molino, L. Rizzi,
  \emph{Comparison theorems on H-type sub-Riemannian
    manifolds}. arXiv:1909.03532 (2019).

\bibitem{BGKT} F.~Baudoin, E.~Grong, K.~Kuwada, A.~Thalmaier,
  \emph{Sub-Laplacian comparison theorems on totally geodesics
    Riemannian foliations}.  Calc.~Var. Partial Differential Equations
  58 (2019), no.~4, Art.~130, 38 pp.

\bibitem{Chavel} I.~Chavel, \emph{Riemannian geometry: a modern
    introduction}. Cambridge Tracts in Mathematics, 108, Cambridge
  University Press, Cambridge, 1993.

\bibitem{ChY} J. Cheeger, S.T. Yau, \emph{A lower bound for the heat
    kernel}.  Comm. Pure Appl. Math. 34 (1981), no. 4, 465--480.

\bibitem{FOT2} M.~Fukushima, Y.~Oshima, M.~Takeda, \emph{Dirichlet
    forms and symmetric {M}arkov processes}, second revised and
  extended ed. De Gruyter Studies in Mathematics, 19, Walter de
  Gruyter \& Co., Berlin, 2011.

\bibitem{GL} M. Gordina, T. Laetsch, \emph{A convergence to Brownian
    motion on sub-Riemannian manifolds.}  Trans. Amer. Math. Soc. 369
  (2017), no. 9, 6263--6278.

\bibitem{GW} R.E. Greene, H. Wu, \emph{Function theory on manifolds
    which possess a pole}, Lecture Notes in Mathematics 699,
  Springer-Verlag, Berlin, 1979.

\bibitem{GT1} E. Grong, A. Thalmaier, \emph{Curvature-dimension
    inequalities on sub-Riemannian manifolds obtained from Riemannian
    foliations:~Part I}. Math.~Z. 282 (2016), no. 1-2, 99--130.

\bibitem{GT2} E. Grong, A. Thalmaier, \emph{Curvature-dimension
    inequalities on sub-Riemannian manifolds obtained from Riemannian
    foliations:~Part II}. Math.~Z. 282 (2016), no. 1-2, 131--164.

\bibitem{GT3} E. Grong, A. Thalmaier, \emph{Stochastic completeness
    and gradient representations for sub-Riemannian manifolds}.
  Potential Anal. 51 (2019), no.~2, 219--254.

\bibitem{Kendall} W. Kendall, \emph{The radial part of Brownian motion
    on a manifold: a semimartingale property.} Ann. Probab. 15 (1987),
  no. 4, 1491--1500
    
\bibitem{Ich} K.~Ichihara, \emph{Comparison theorems for Brownian
    motions on Riemannian manifolds and their applications}.
  J.~Multivariate Anal. 24, 177--188 (1988)
 
\bibitem{Ik-Wa} N. Ikeda, S. Watanabe, \emph{A comparison theorem for
    solutions of stochastic differential equations and its
    applications}. Osaka J. Math. 14, (1977), 619--633.
  
\bibitem{Hsu} E.~P.~Hsu, \textit{Stochastic analysis on
    manifolds}. American Mathematical Society, Providence, RI, 2002.
  
\bibitem{LL} P.\,W.Y.~Lee, C.~Li, \emph{Bishop and {L}aplacian comparison theorems
  on {S}asakian manifolds},
  Comm. Anal. Geom. \textbf{26} (2018), no.~4, 915--954.
   
 \bibitem{RT} L.~Rifford, E.~Tr\'elat, \emph{Morse-Sard type results in
    sub-Riemannian geometry}. Math. Ann. 332 (2005), no.~1, 145--159.

\bibitem{St0} K.-T. Sturm, \emph{Sharp estimates for capacities and
    applications to symmetric diffusions}. Probab. Theory Related
  Fields 103 (1995), no.~1, 73--89.

\bibitem{Th} A. Thalmaier, \emph{Geometry of subelliptic diffusions.}
  Geometry, analysis and dynamics on sub-Riemannian
  manifolds. Vol.~II, 85--169, EMS Ser. Lect. Math., Eur. Math. Soc.,
  Z\"urich, 2016.

 \end{thebibliography}
\end{document}